\newtheorem{theorem}{Theorem}[section]
\newtheorem{lemma}[theorem]{Lemma}
\newtheorem{corollary}[theorem]{Corollary}
\newtheorem{proposition}[theorem]{Proposition}
\newtheorem{remark}[theorem]{Remark}
\numberwithin{equation}{section}
 \newcommand{\C}{{\mathbb C}}
 \newcommand{\R}{{\mathbb {R}}}
 \newcommand{\Gn}{{\mathbf{G}}}
 \newcommand{\Godd}{{\Gn_{a,\lambda}}}
 \newcommand{\Gmixed}{{\Gn_{a,c_0}}}
 \newcommand{\X}{{\mathbf{X}}}
 \newcommand{\Xn}{{\mathbf{X}}}
 \newcommand{\Tn}{{\mathbf{T}}}
 \newcommand{\An}{{\mathbf{A}}}
 \newcommand{\A}{{\mathcal{A}}}
 \newcommand{\I}{{\mathcal{I}}}
 \newcommand{\sgn}{\operatorname{sgn}}
  \newcommand{\pv}{\operatorname{p.v}}
 \newcommand{\ps}{{\phi_2(x)}}
 \newcommand{\pt}{{\phi_3(x)}}
\begin{document}
%%%%%%%%%%%%%%%%%%%%%%%%%%%%%%%%%%%%%%%%%%%%%%%%%%%%%%%%%%%%%%%%%%%%%%%%%%%%%%%%%%%%%%%%%%
\title[Stability of self-similar solutions of 1D cubic Schr\"odinger equations]{
 On the stability of self-similar solutions of 1D cubic Schr\"odinger equations}
\author{S. Gutierrez$^1$ \and L. Vega$^2$}
\address{${\ }^{1}$S. Gutierrez, School of Mathematics, The Watson Building, University of Birmingham, Edgbaston,
Birmingham, B15 2TT, England.}
\email{S.Gutierrez@bham.ac.uk}
\address{${\ }^{2}$L. Vega, Departamento de Matem\'aticas, Universidad del Pa\'is Vasco,
Aptdo.~644, 48080 Bilbao, Spain.} \email{luis.vega@ehu.es}
\thanks{\noindent Mathematics Subject Classification. 35Q35, 35J10, 35Q55 and
35B35.\\
Keywords. Non-linear Schr\"odinger Equations, Stability, LIA and
Vortex filaments.}
\date{10th March 2011}
\begin{abstract}
In this paper we will study the stability properties of self-similar
solutions of  $1$-d cubic NLS equations with time-dependent
coefficients of the form
\begin{equation}
 \label{cubic}
 \displaystyle{
 iu_t+u_{xx}+\frac{u}{2} (|u|^2-\frac{A}{t})=0,
 \qquad A\in \R.
 }
\end{equation}
The study of the stability of these self-similar solutions is
related, through the Hasimoto transformation, to the stability of
some singular vortex dynamics in the setting of the {\it{Localized
Induction Equation}} (LIE), an equation modeling  the self-induced
motion of vortex filaments in ideal fluids and superfluids. We
follow the approach used by Banica and Vega that is based on the
so-called pseudo-conformal transformation, which reduces the problem
to the construction of modified wave operators for solutions of the
equation
$$
 iv_t+ v_{xx} +\frac{v}{2t}(|v|^2-A)=0.
$$
As a by-product of our results we prove that equation (\ref{cubic})
is well-posed in appropriate function spaces when the initial datum
is given by $u(0,x)= z_0 \pv \frac{1}{x}$ for some values of $z_0\in
\C\setminus \{ 0\}$, and $A$ is adequately chosen. This is in deep
contrast with the case when the initial datum is the Dirac-delta
distribution.
\end{abstract}
%%%%%%%%%%%%%%%%%%%%%%%%%%%%%%%%%%%%%%%%%%%%%%%%%%%%%%%%%%%%%%%%%%%%%%%%%%%%%%%%%%%%%%%%%%%%
\maketitle

%\thispagestyle{empty}
%-------------------------
\section{Introduction}
\label{introduction}
%-------------------------
In this paper, we study the stability properties of self-similar
solutions of the form
\begin{equation}
 \label{uf}
 u_{f}(t,x)= \frac{e^{\frac{ix^2}{4t}}}{\sqrt{t}} f\left( \frac{x}{\sqrt{t}}\right),
 \qquad x\in\R, \qquad t>0
\end{equation}
to the cubic nonlinear Schr\"odinger equations (NLS) in one
dimension:
\begin{equation}
 \label{eqs}
 iu_{t}+u_{xx}+\frac{u}{2} \left( |u|^2 -\frac{A}{t} \right)=0,
 \qquad A\in\R,
\end{equation}
that is solutions $u_f$ of the form (\ref{uf}) with $f$ a solution
of the equation
\begin{equation}
 \label{eqf}
 f''+i \frac{x}{2} f' + \frac{f}{2} (|f|^2-A)=0, \qquad A\in \mathbb{R}.
\end{equation}

Our main  motivation for the study of solutions of (\ref{eqs}) of
the form (\ref{uf}) comes from their connection to the singular
vortex  dynamics of what we refer to as ``self-similar" solutions to
the so-called {\it{Localized Induction Approximation}},  a geometric
flow in $\R^3$ modeling the dynamics of a vortex filament in ideal
fluids and superfluids.

The {\it {Localized Induction Approximation}}, often abbreviated LIA
or LIE, is des\-cribed by the following system of nonlinear
equations:
\begin{equation}
 \label{LIA}
 \Xn_t=\Xn_x\times \Xn_{xx},
\end{equation}
where $\Xn=\Xn(t,x)$ represents a curve in $\R^3$ with $t$ and $x$
denoting time and arclength, respectively. Using the Frenet equations we can also write
\begin{equation}
 \Xn_t=cb
\end{equation}
with $c$ and $b$ denoting the curvature and the binormal vector
respectively. For this reason the geometric PDE (\ref{LIA}) is also
referred to as {\it{binormal flow}}.

Equation (\ref{LIA}) was first proposed by Da Rios in 1906, and
rediscovered independently by Arms-Hamma and Betchov  in the early
$1960$s (see~\cite{DaR}, \cite{AH} and ~\cite{Be}), as an
approximation model for the self-induced motion of a vortex filament
in a $3$D-incompressible inviscid fluid. The use of the localized
induction equation to model the dynamical behaviour of a vortex in
superfluids such as ${\ }^{4}$He  started with the work by Schwarz
in 1985 (\cite{Sch}). In both, the classical and the superfluid
settings, the term localized induction approximation is used to
highlight the fact that this approximation only retains the local
effects of the Biot-Savart integral. We refer the reader to
\cite{B}, \cite{S}, \cite{AKO} and \cite{MB} for a detailed analysis
of the model and its limitations, and to the two papers by
T.~Lipniacki in \cite{Lip1} and \cite{Lip2} for further background
and references about the use of LIA in the setting of superfluid
helium.

Cubic NLS equations of the type (\ref{eqs}) are related to LIA
through the so-called Hasimoto transformation (see~\cite{Has}). This
connection is establised as follows: Let $\Xn=\Xn(t,x)$ be a regular
solution of LIA with associated curvature $c(t,x)$, and torsion
$\tau(t,x)$. Assuming that the curvature is strictly positive at all
points $x$, define the {\it{filament function}}
\begin{equation}
 \label{filamentfunction}
 u(t,x)= c(t,x)\text{exp}\, \left(
 i\int_{0}^{x} \tau(t, x')\, dx'
 \right).
\end{equation}
Then $u$ solves the nonlinear Schr\"odinger equation
\begin{equation}
 \label{sch}
 iu_t +u_{xx} +\frac{u}{2} (|u|^2-A(t))=0,
\end{equation}
where $A(t)$ is a time-dependent function which depends on the
values of $c(t,x)$ and $\tau(t,x)$ at $x=0$. Precisely,
\begin{equation}
 \label{At}
 A(t)= \left(
 2\frac{c_{xx}-c\tau^2}{c}+c^2
 \right)(t,0).
\end{equation}
Our analytical study of solutions of LIA started in \cite{GRV}, and
\cite{GV}, where the existence of solutions of LIA which develop a
singularity in finite time was established\footnote{See
also~\cite{Bu}, \cite{Lip1}, and \cite{Lip2}.}. The study of the
stability properties of the singular dynamics leading to the
formation of a corner singularity in finite time found in \cite{GRV}
was carried out by V.~Banica and L.~Vega in the  papers \cite{BV1},
\cite{BV2}, and \cite{BV3}.

Here, we are concerned with the singular dynamics found in
\cite{GV}. In particular, in \cite{GV}, solutions of LIA of the form
\begin{equation}
 \label{sol2}
  \Xn(t,x)= e^{\frac{\mathcal{A}}{2}\log t}\, \sqrt{t} \Gn(x/\sqrt{t}),
  \qquad t>0
\end{equation}
with $\mathcal{A}$ a real antisymmetric $3\times 3$ matrix of the
form
\begin{equation} \label{A}
 \mathcal{A}=
 \left(
 \begin{array}{ccc}
  0 & -a & 0\\
  a & 0 & 0\\
  0 & 0 & 0
 \end{array}
 \right),
 \qquad a\in\R
\end{equation}
are found to converge to a singular initial data $\Xn(0,x)$. The
precise statement of the result is the following:
\begin{proposition}
\label{spiral}
(See~\cite[Proposition 1 and 2]{GV}) For any
given $a\in\R$, and $\Gn$ solution of
\begin{equation}
 \label{P1}
  \Gn''=\frac{1}{2}(\mathcal{I}+\mathcal{A})\Gn\times \Gn'
\end{equation}
associated to an initial data $(\Gn(0), \Gn'(0))$ satisfying
\begin{equation}
 \label{P2}
  |\Gn(0)|=1
  \qquad {\hbox{and}}\qquad
  (\mathcal{I} +\mathcal{A}) \Gn(0)\cdot \Gn'(0)=0,
\end{equation}
define
\begin{equation}
 \label{P3}
  \Xn_a(t,x)= e^{\frac{\mathcal{A}}{2}\log t}\, \sqrt{t} \Gn(x/\sqrt{t}),
  \qquad t>0,
  \qquad {\hbox{with}}\qquad
  \mathcal{A}=
  \left(
  \begin{array}{ccc}
   0 & -a & 0\\
   a & 0 & 0\\
   0 & 0 & 0
  \end{array}
  \right).
\end{equation}
Then, $\Xn_a(t,x)$ is an analytic solution of LIA for all $t>0$, and
there exist non-zero vectors $\An^{+}$ and $\An^{-}\in \R^3$ such
that\footnote{$\chi_{E}(x)$ denotes the characteristic function of a
Lebesgue measurable set $E$.}
$$
 \lim_{t\rightarrow 0^{+}} \Xn_a(t,x)= xe^{\A\log|x|} (
 \An^{+}\chi_{[0,+\infty)}(x) + \An^{-}\chi_{(-\infty, 0]}(x)
 ):\,=\Xn_a(0,x)
$$
with
$$
 |\X_a(t,x)- xe^{\A\log |x|} \An^{\pm}|\leq
 2\sqrt{t}\left(  \sup_{x\in\R} |c(x)|  \right).
$$
Here, $c(x)$ is the curvature of the curve $\Gn(x)=\Xn(1,x)$, which is always bounded.
\end{proposition}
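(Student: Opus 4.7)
Differentiating $\Xn_a(t,x)=\sqrt{t}\,R_t\Gn(s)$ with $R_t:=e^{(\A/2)\log t}$ and $s:=x/\sqrt{t}$ gives
\begin{equation*}
\Xn_{a,x}=R_t\Gn'(s),\qquad \Xn_{a,xx}=\frac{1}{\sqrt{t}}R_t\Gn''(s),\qquad \Xn_{a,t}=\frac{1}{2\sqrt{t}}R_t\bigl[(\I+\A)\Gn(s)-s\Gn'(s)\bigr].
\end{equation*}
Since $\A$ is antisymmetric, $R_t\in SO(3)$ preserves cross products, so $\Xn_{a,x}\times\Xn_{a,xx}=t^{-1/2}R_t(\Gn'\times\Gn'')$ and (\ref{LIA}) reduces to the pointwise identity $2\Gn'\times\Gn''=(\I+\A)\Gn-s\Gn'$. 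Inserting (\ref{P1}) into the left-hand side and applying the double-cross-product rule yields $2\Gn'\times\Gn''=|\Gn'|^2(\I+\A)\Gn-((\I+\A)\Gn\cdot\Gn')\Gn'$, so the identity will follow from the two conservation laws $|\Gn'|^2\equiv 1$ and $(\I+\A)\Gn\cdot\Gn'\equiv s$. Both are one-line differentiations: $\tfrac{d}{ds}|\Gn'|^2=2\Gn'\cdot\Gn''=0$ by (\ref{P1}), together with the (tacit) arclength normalization $|\Gn'(0)|=1$; and $\tfrac{d}{ds}[(\I+\A)\Gn\cdot\Gn']=(\I+\A)\Gn'\cdot\Gn'+(\I+\A)\Gn\cdot\Gn''=|\Gn'|^2+0=1$, using antisymmetry of $\A$ in the first term, (\ref{P1}) in the second, and the vanishing initial value supplied by (\ref{P2}). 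Analyticity of $\Xn_a$ on $t>0$ then follows from analyticity of $\Gn$ as the solution of an analytic ODE.

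Once (\ref{LIA}) is established, $|\Xn_{a,x}|=|\Gn'|=1$ together with $\Xn_{a,x}\perp\Xn_{a,xx}$ (since $\Gn''\perp\Gn'$) forces $|\Xn_{a,t}|=|\Xn_{a,xx}|=t^{-1/2}c(s)$, where $c$ is the bounded curvature of $\Gn$. For fixed $x$ and $0<t_1<t_2$,
\begin{equation*}
|\Xn_a(t_2,x)-\Xn_a(t_1,x)|\le\int_{t_1}^{t_2}\frac{\sup c}{\sqrt{\tau}}\,d\tau=2\,\sup c\,\bigl(\sqrt{t_2}-\sqrt{t_1}\bigr),
\end{equation*}
so $\Xn_a(\cdot,x)$ is Cauchy as $t\to 0^+$, a limit $\Xn_a(0^+,x)$ exists, and sending $t_1\to 0^+$ in the inequality produces the claimed quantitative bound $|\Xn_a(t,x)-\Xn_a(0^+,x)|\le 2\sqrt{t}\,\sup c$.

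To identify this limit I exploit the scaling symmetry of the ansatz: a direct substitution (using that functions of $\A$ commute) shows $\Xn_a(\lambda^2 t,\lambda x)=\lambda\,e^{\A\log\lambda}\,\Xn_a(t,x)$ for every $\lambda>0$. Passing to $t\to 0^+$ gives $\Xn_a(0^+,\lambda x)=\lambda\,e^{\A\log\lambda}\,\Xn_a(0^+,x)$, and specializing to $x=\pm 1$ with $\lambda=|y|$ reproduces the claimed piecewise formula with $\An^+:=\Xn_a(0^+,1)$ and $\An^-:=-\Xn_a(0^+,-1)$. The main obstacle is showing $\An^\pm\ne 0$, which I would deduce from the second conservation law via Cauchy--Schwarz: $|s|=|(\I+\A)\Gn\cdot\Gn'|\le\|\I+\A\|\,|\Gn(s)|=\sqrt{1+a^2}\,|\Gn(s)|$, hence $|\Xn_a(t,\pm 1)|=\sqrt{t}\,|\Gn(\pm 1/\sqrt{t})|\ge 1/\sqrt{1+a^2}$ uniformly in $t>0$, and passing to the limit yields $|\An^\pm|\ge 1/\sqrt{1+a^2}>0$. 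In summary, the pair of conservation laws carries essentially the entire argument: it drives the LIA verification and the non-vanishing of $\An^\pm$, while the quantitative bound and the existence of the limit follow cleanly from the geometric identity $|\Xn_{a,t}|=c/\sqrt{t}$.
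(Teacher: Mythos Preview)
The paper does not prove this proposition; it is quoted from \cite{GV} and used as input, so there is no in-paper proof to compare against. Your argument is nonetheless the right one and matches the approach in \cite{GV}: the two conservation laws $|\Gn'|^2\equiv 1$ and $(\I+\A)\Gn\cdot\Gn'\equiv s$ are exactly what drives the LIA verification, the integrability of $|\Xn_{a,t}|=c/\sqrt t$ gives the limit and the rate, and the scaling identity $\Xn_a(\lambda^2 t,\lambda x)=\lambda e^{\A\log\lambda}\Xn_a(t,x)$ is the clean way to pin down the form of $\Xn_a(0,x)$. Your Cauchy--Schwarz lower bound $|\Gn(s)|\ge |s|/\sqrt{1+a^2}$ for the non-vanishing of $\An^\pm$ is a nice self-contained touch.

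There is one genuine gap: you invoke that the curvature $c(s)=|\Gn''(s)|$ is bounded, but your two conservation laws do not give this. From $2\Gn''=(\I+\A)\Gn\times\Gn'$ together with $(\I+\A)\Gn\cdot\Gn'=s$ one gets $4c(s)^2=|(\I+\A)\Gn(s)|^2-s^2$, and the right-hand side is not obviously bounded (the naive estimate $|(\I+\A)\Gn|^2\le(1+a^2)|\Gn|^2$ combined with $|\Gn(s)|\le|\Gn(0)|+|s|$ permits growth of order $a^2 s^2$). In \cite{GV} the bound comes through the Hasimoto side: the associated filament function satisfies $|f|^2=-aT_3+A$ (see (\ref{key-identities}) here), and since $T_3$ is a component of a unit tangent vector one gets $c=|f|\le\sqrt{|a|+|A|}$; equivalently one may quote the conservation law $|f'|^2+\tfrac14(|f|^2-A)^2=E(0)$ of Proposition~\ref{f}\,(i). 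Either way, the boundedness of $c$ is a separate ingredient that you need to supply.

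A minor remark: you are right that the stated hypothesis $|\Gn(0)|=1$ is a misprint for $|\Gn'(0)|=1$; the paper's own examples (the odd solutions with $\Gn(0)=0$) confirm this, and your treatment of it as a tacit arclength normalization is correct.
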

Solutions of the form (\ref{sol2}) have also been considered by T.~Lipniacki
(see~\cite{Lip1} and \cite{Lip2}) in the setting of the flow defined by
$$
 \Xn(t,x)=\beta \Xn_x\times \Xn_{xx}+\alpha \Xn_{xx}, \qquad \alpha\neq 0
$$
modeling the motion of a quantum vortex in superfluid helium.

Proposition~\ref{spiral} asserts that the evolution of the solution
$\Gn(x)$  of (\ref{P1})-(\ref{P2}) under the relation (\ref{P3})
leads to a solution of LIA which converges as $t\rightarrow 0^{+}$
to an initial curve $\X_a(0,x)$ given by
$$
 \Xn_a(0,x)= xe^{\A\log|x|} (
 \An^{+}\chi_{[0,+\infty)}(x) + \An^{-}\chi_{(-\infty, 0]}(x)).
$$
The initial curve $\Xn_a(0,x)$ is the sum of two 3d-logarithmic
spirals with a common origin. The rotation axis of these spirals is
the OZ-axis under the condition that the matrix $\mathcal{A}$ is of
the form (\ref{A}). In the case when the parameter $a\neq 0$, the
singularity of the initial curve $\Xn_{a}(0, x)$ comes from the
non-existence of the limit as $x\rightarrow 0$ of its tangent vector
$\Tn_{a}(0,x)$.

The properties of the ``self-similar" solutions of LIA given by
Proposition~\ref{spiral} rely, through the Hasimoto transformation,
on the properties of their associated filament function defined by
(\ref{filamentfunction}). This connection plays a fundamental role
in the study of the properties of these solutions (see~ \cite{GV}).
In particular, and following the philosophy in \cite{BV2}, a first
step to understand the stability properties of these solutions is to
study the stability of their related filament function in the
setting of the cubic Schr\"odinger equations (\ref{sch}). This will
be our main interest here.

In order to find the filament function associated to the
``self-similar" solutions of LIA given by Proposition~\ref{spiral},
first notice that it is straightforward to verify that the curvature
and torsion associated to solutions of LIA $\Xn_{a}(t,x)$ of the
form (\ref{P3}) are of the self-similar form\footnote{This is the
reason why we refer to solutions of LIA of the form (\ref{sol2}) as
``self-similar" solutions.}
$$
 c(t,x)=\frac{1}{\sqrt{t}}c(x/\sqrt{t})
 \qquad {\hbox{and}}\qquad
 \tau(t,x)=\frac{1}{\sqrt{t}} \tau(x/\sqrt{t}),
$$
so their filament function is given by
(see~(\ref{filamentfunction}))
\begin{equation}
 \label{filament-self}
  u(t,x)=\frac{1}{\sqrt{t}} u\left( \frac{x}{\sqrt{t}}   \right)
  \qquad {\hbox{and}}\qquad
  A(t)=\frac{A}{t},
\end{equation}
with $A=A(1)$ defined by (\ref{At}).
Since $\Xn_a(t,x)$ is a solution of LIA, through the Hasimoto
transform, we know that its filament function (\ref{filament-self})
solves the NLS
\begin{equation}
 \label{eqs'}
 iu_t+u_{xx}+\frac{u}{2} (|u|^2-\frac{A}{t})=0
 \qquad {\hbox{with}}\qquad
 A=A(1).
\end{equation}
Thus the function $u(s)$ in (\ref{filament-self}) is a solution of
the complex ODE
\begin{equation}
 \label{u-self}
   u''-\frac{i}{2}(u+xu')+\frac{u}{2} (|u|^2-A)=0.
\end{equation}
Notice that, by introducing a new variable $f$ defined by
\begin{equation}
 \label{u-f}
 u(x)=f(x)e^{i\frac{x^2}{4}},
\end{equation}
equation (\ref{u-self}) becomes
\begin{equation}
 \label{eqf'}
 f''+i\frac{s}{2} f'+\frac{f}{2}(|f|^2-A)=0.
\end{equation}
Previous lines show that the filament function associated to a
solution of LIA $\Xn_a(t,x)$ given by Proposition~\ref{spiral} is of
the form
$$
 u(t,x)=\frac{e^{i\frac{x^2}{4t}}}{\sqrt{t}} \, f\left( \frac{x}{\sqrt{t}}  \right),
$$
with $f$ a solution of the second order ODE (\ref{eqf'}), and solves
the nonlinear Schr\"odinger equation (\ref{eqs'}).

Furthermore, in \cite{GV} it was proved that the constant $A$ in the
above equation is given in terms of the initial conditions $(\Gn(0),
\Gn'(0))$ and the parameter $a$ by the identity
\begin{equation}
 \label{A1}
 A=aT_3(0)+\frac{|(\I +\A)\Gn(0)|^2}{4},
\end{equation}
and a solution $\Xn_a(t,x)$ and its associated function $f$ are
related through the following identities (see~\cite[pp. 2114]{GV})
\begin{equation}
 \label{key-identities}
 |f|^2(x)= -aT_3(x)+A,
 \qquad {\hbox{and}}\qquad
 |f'|^2(x)=\frac{1}{4} |\A \Tn \times \Tn|^2(x)= \frac{a^2}{4}(1-T^2_{3}),
 %\qquad {\hbox{and}}\qquad
 %\Im  (\bar f f')(x)= -\frac{1}{2} \A \Tn \cdot \Tn' (x),
\end{equation}
where $\Tn=(T_1, T_2, T_3)$ is the tangent unitary vector associated
to  $\Gn(x)=\Xn(1,x)$.

Among all the possible solutions of LIA $\X_a(t,x)$ of the form
(\ref{P3}) given by Proposition~\ref{spiral}, and in order to
motivate further our main result in this paper, it is important to
mention two special cases. In what follows, we will define what we
refer to as {\it{odd-solutions}} and {\it{mixed-symmetry solutions}}
of LIA.

The following cases come from the symmetry properties of the
equation
\begin{equation}
 \label{G}
  \Gn''=\frac{1}{2}(\I+\A)\Gn\times \Gn',
  \qquad \A=\left(
  \begin{array}{ccc}
  0 & -a & 0 \\
  a & 0 & 0 \\
  0 & 0 & 0
  \end{array}
  \right).
\end{equation}

\noindent {\it{Odd solutions:}} For fixed $a\in\R$ and $-1\leq
\lambda \leq 1$, let $\Godd$ the solution of (\ref{G}) with   the
initial condition
\begin{equation} \label{odd-con}
\Godd(0)=(0,0,0) \qquad {\hbox{and}}\qquad (\Godd)'(0)=(0, \sqrt{1-\lambda^2}, \lambda).
\end{equation}
Then,
\begin{equation}
 \label{odd}
 \Godd(x)=-\Godd(-x)
\end{equation}
(notice that if $\Gn(x)$ is a solution of (\ref{G})) with the
initial condition (\ref{odd-con}), then the function
$\tilde\Gn(x)=-\Gn(-x)$ is also a solution).

We refer the solutions of LIA of the form (\ref{P3}) with $\Godd(x)$
the solution of (\ref{G})-(\ref{odd-con}) as {\it{odd solutions}}.

In Figure~1 and Figure~2, we display the graphics of different
solutions  $\Godd$ of (\ref{G}) associated to an initial data of the
form (\ref{odd-con}). The right-handside pictures represent the
solution near the point $x=0$. The curvature of the curves $\Godd$
at the point $x=0$ is zero.

%--------- Examples: odd-solutions----------------------------------%
 \begin{figure}[h]
  \label{Odd1}
  \begin{center}
   \scalebox{0.4}{\includegraphics{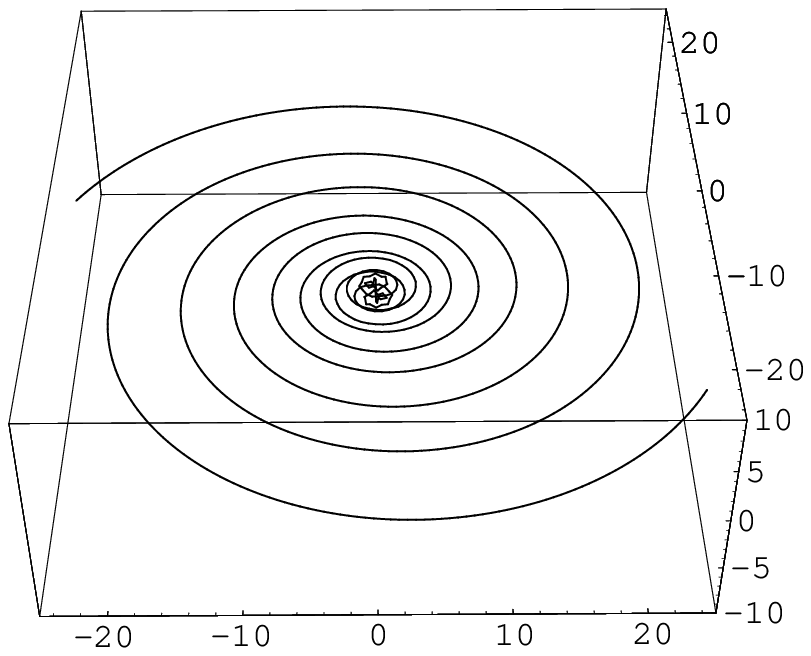}} %0.5
   \hspace{2cm}
   \scalebox{0.4}{\includegraphics{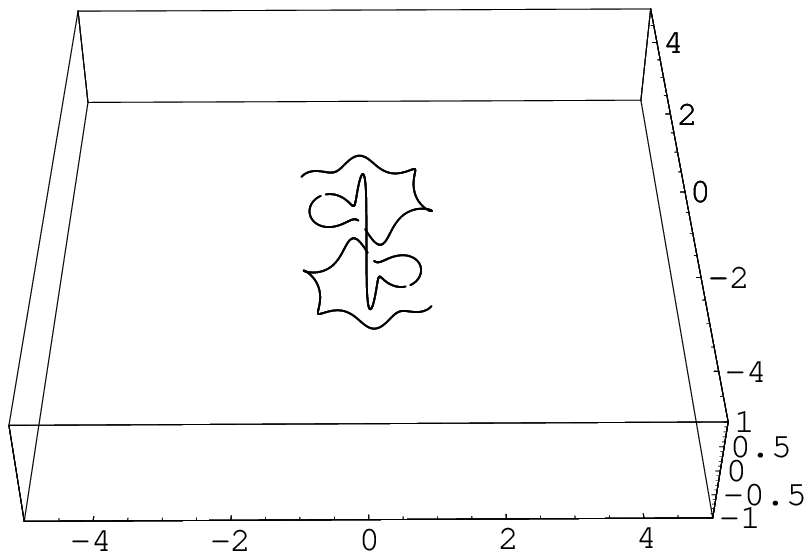}}  %0.5
  \end{center}
  \caption{Odd solutions. The vortex line
  $\Gn_{a,\lambda}$ corresponding to the solution of the system (\ref{G})-(\ref{odd-con}) with $a=10$ and $\lambda=0.956$.
}
 \end{figure}
% \vspace{-1.0cm}

 \begin{figure}[h]
  \label{Odd3}
  \begin{center}
   \scalebox{0.45}{\includegraphics{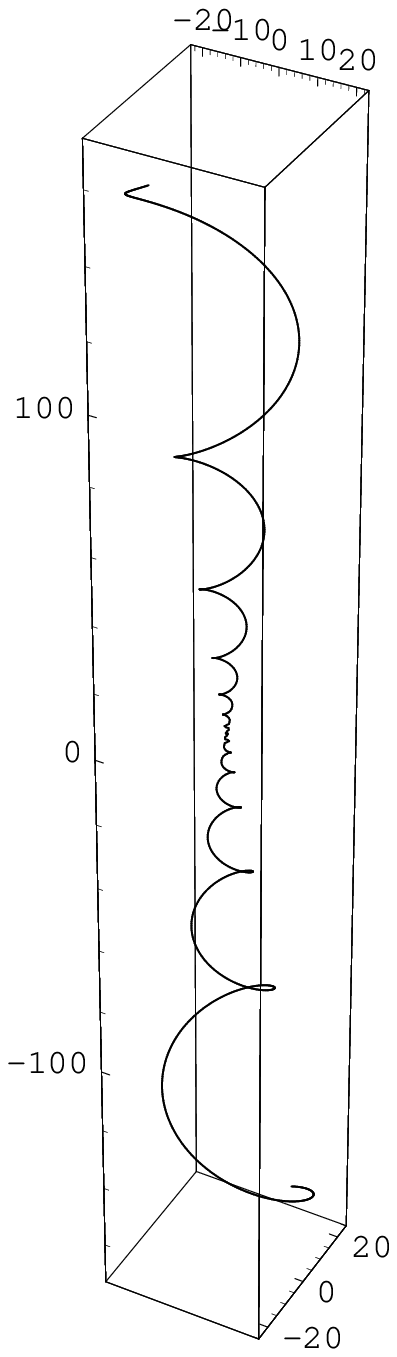}}  %0.55
   \hspace{3cm}
   \vspace{-.8cm}
   \scalebox{0.45}{\includegraphics{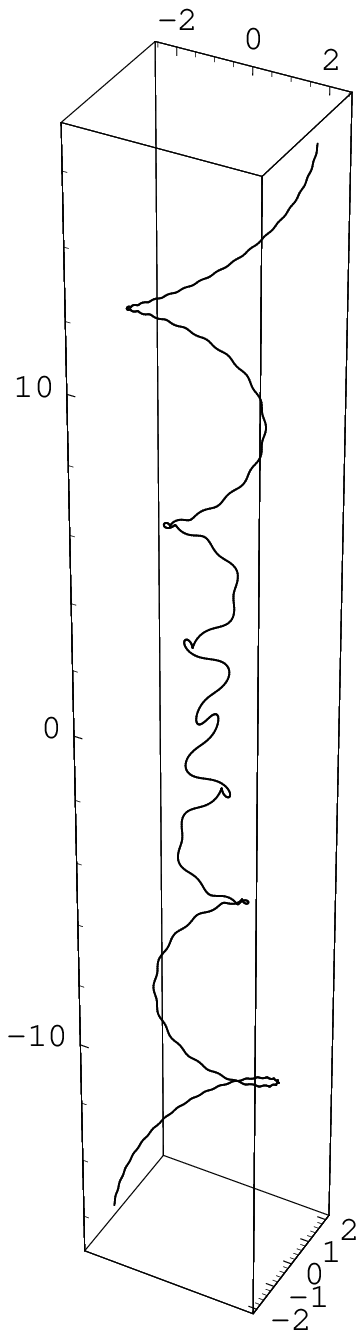}}  %0.55
  \end{center}
  \caption{Odd solutions. The vortex line $\Gn_{a,\lambda}$ corresponding to the solution of
  the system (\ref{G})-(\ref{odd-con}) with $a=10$ and $\lambda=-0.1$.
}
 \end{figure}
%------------------------------------------------------------------%
%

%
\noindent {\it{Mixed-symmetry solutions:}} For fixed $a\in\R$ and
$c_0>0$, let $\Gmixed$ the solution of (\ref{G}) with the initial
condition
\begin{equation} \label{mixed-con}
\Gmixed(0)=(\frac{2c_0}{\sqrt{1+a^2}},0,0) \qquad {\hbox{and}}\qquad (\Gmixed)'(0)=(0, 0, \pm 1).
\end{equation}
Then, $\Gmixed=(G_1, G_2,G_3)$ satisfies
\begin{equation}
 \label{mixed}
 \left\{
 \begin{array}{l}
  G_1(x)= G_1(-x)\\
  G_2(x)= G_2(-x)\\
  G_3(x)= -G_3(-x).\\
 \end{array}
 \right.
\end{equation}
This is a consequence of the fact that the equation (\ref{G}) and
the initial condition in (\ref{mixed-con}) remain unchanged by the
transformation $\Gn(x)=(G_1(x), G_2(x), G_3(x))\rightsquigarrow
(G_1(-x), G_2(-x), -G_3(-x))$.

We refer the solutions of LIA of the form (\ref{P3}) with
$\Gmixed(x)$ the solution of (\ref{G})-(\ref{mixed-con}) as
{\it{mixed-symmetry solutions}}.

Two examples of solutions of (\ref{G}) with initial data of the form
(\ref{mixed-con}) are plotted in Figures~3 and Figure~4. As before,
the r.h.s figure represents the curve $\Gn_{a, c_0}$ near the point
$x=0$.
%
%----------- Examples: Mixed symmetry-solutions-------------------------
%
%
 \begin{figure}[h]
 %\label{ }
  \begin{center}
   \scalebox{0.6}{\includegraphics{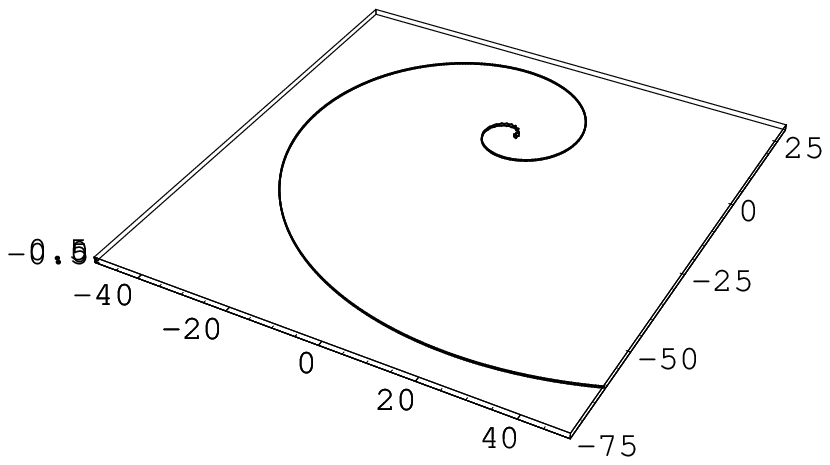}} %0.7
   \hspace{2cm}
   \scalebox{0.55}{\includegraphics{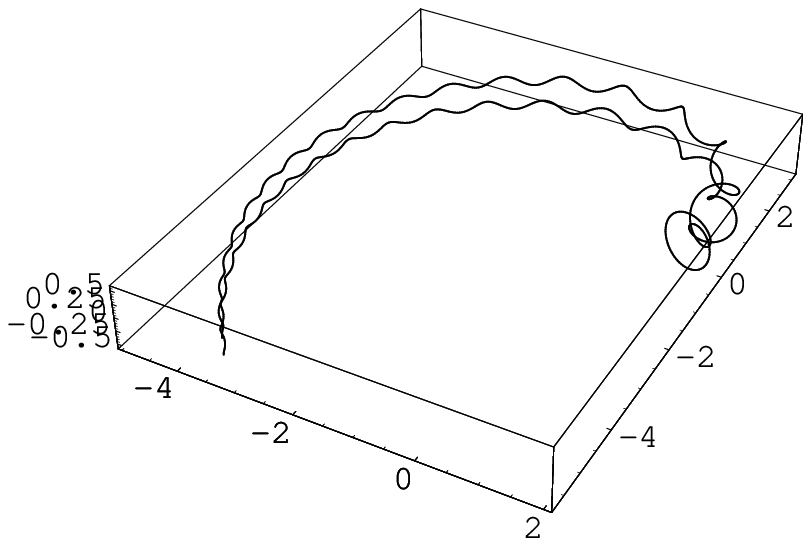}} %0.65
  \end{center}
   \caption{Mixed-symmetry solutions. The vortex line $\Gn_{a,c_0}$ corres\-ponding
to the solution of the system (\ref{G})-(\ref{mixed-con}) with $a=3$ and $c_0=1.8$.
}
 \end{figure}
 \begin{figure}[h]
%  \label{ }
  \begin{center}
   \scalebox{0.4}{\includegraphics{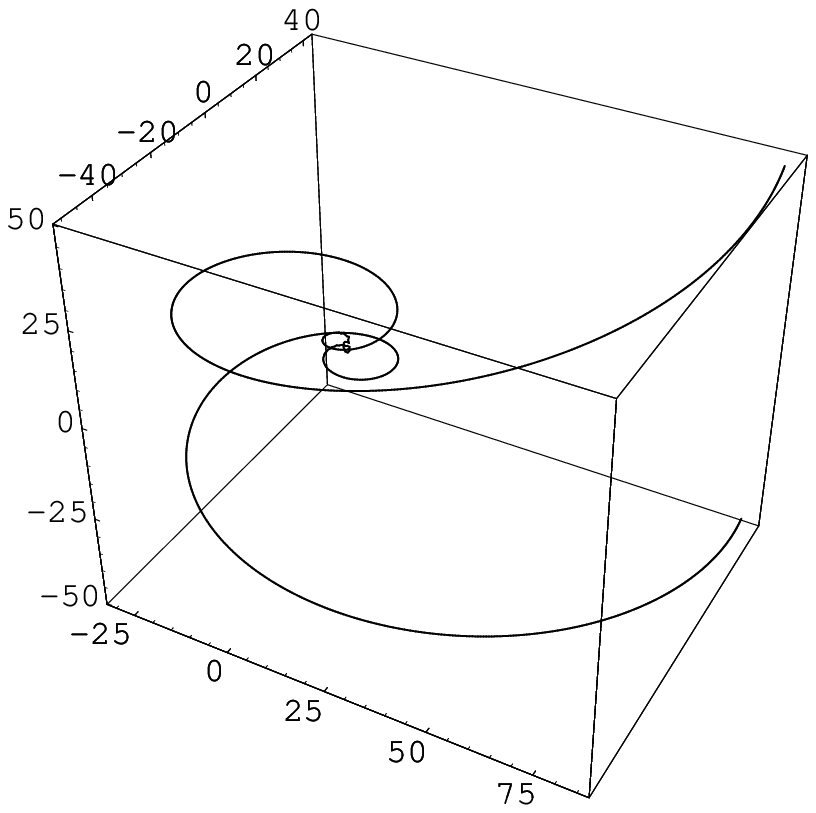}} %0.5
   \hspace{2cm}
   \scalebox{0.4}{\includegraphics{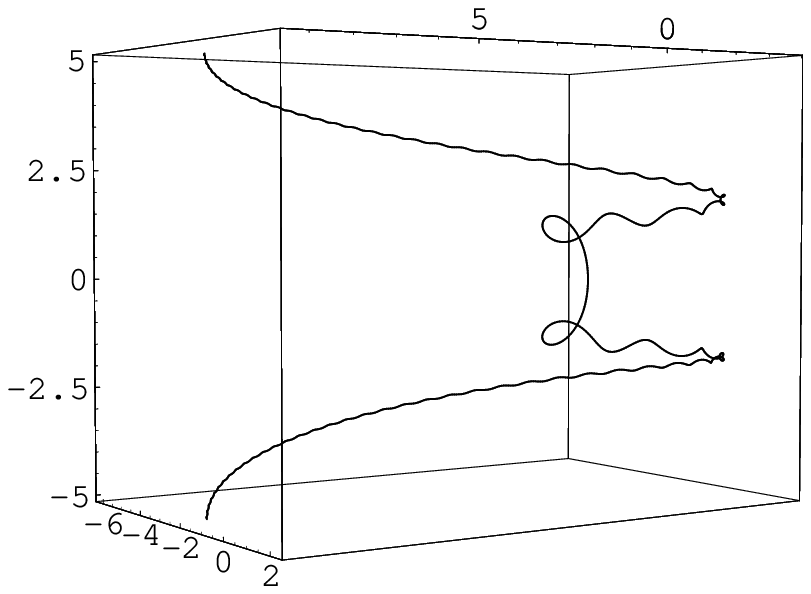}} %0.5
  \end{center}
  \caption{Mixed-symmetry solutions. The vortex line $\Gn_{a,c_0}$ corres\-ponding
to the solution of the system (\ref{G})-(\ref{mixed-con}) with $a=3$ and $c_0=0.4$.
%$\Gmixed(0)=(2c_0/\sqrt{1+a^2},0,0)$, $(\Gmixed)'(0)=(0,0,1)$, $a=3$, $c_0=0.4$.
}
 \end{figure}
%------------------------------------------------------------------------
%
%

\noindent Finally, observe that if $\Gn(x)=(G_1(x), G_2(x), G_3(x))$
is a solution of (\ref{G}), then the function $\tilde
\Gn(x)=(G_1(-x), -G_2(-x), G_3(-x))$ is a solution of
$$
 \Gn''= \frac{1}{2} (\I +\tilde \A)\Gn\times \Gn'
 \qquad {\hbox{with}}\qquad
 \tilde A=\left(
 \begin{array}{ccc}
  0 & a & 0\\
  -a& 0 & 0\\
  0& 0 & 0
 \end{array}
\right).
$$
As a consequence, in what follows we will assume w.l.o.g that $a\geq 0$.

Using the formulae (\ref{A1}) and (\ref{key-identities}), from the
initial conditions (\ref{odd-con}) it follows that the function $f$
associated to an odd solution of LIA is, through the Hasimoto
transformation (\ref{filamentfunction}) and the change of variables
(\ref{u-f}), a ({\it{odd}}) solution of
$$
 f''+i\frac{x}{2} f' +\frac{f}{2} (|f|^2-A)=0,
 %\qquad {\hbox{with}}
 \qquad A=a\lambda
$$
with initial conditions $(f(0), f'(0))$ satisfying
$$
 |f(0)|^2=0
 \qquad {\hbox{and}} \qquad
 |f'(0)|^2=\frac{a^2}{4}(1-\lambda^2),
 \qquad a>0\footnote{Notice that if $a=0$, then $|f(0)|=|f'(0)|=0$, so that $f\equiv 0$.}, \qquad -1\leq \lambda \leq 1.
$$
Analogously, from (\ref{A1}), (\ref{key-identities}),  and
(\ref{mixed-con}) it follows that the function $f$  associated to a
mixed-symmetry solution of LIA is a ({\it{even}}) solution of
$$
 f''+i\frac{x}{2} f' +\frac{f}{2} (|f|^2-A)=0,
 %\qquad {\hbox{with}}
 \qquad A=\pm a+ c^2_{0}
$$
with  initial conditions $(f(0), f'(0))$ satisfying
$$
 |f(0)|^2=c_0^2
 \qquad {\hbox{and}} \qquad
 |f'(0)|^2=0.
$$
From (\ref{filament-self}), (\ref{eqs'}), and the above argument it
follows that the filament function $u(t,x)$ associated to an odd
solution of LIA (respectively mixed-symmetry solution) is of the
form %(\ref{uf})
\begin{equation}
 \label{uf1}
 u_{f}(t,x)= \frac{e^{\frac{ix^2}{4t}}}{\sqrt{t}} f\left( \frac{x}{\sqrt{t}}\right),
 \qquad x\in\R, \qquad t>0
\end{equation}
\begin{equation}
 \label{f1}
 f''+i \frac{x}{2} f' + \frac{f}{2} (|f|^2-A)=0,
 %\qquad A=a\lambda
 %\qquad {\hbox{(resp. }} A=\pm a+c_0^2 {\hbox{)}},
\end{equation}
with $A=a\lambda$ (resp. $A=\pm a+c_0^2$), and solves the one
dimensional cubic Schr\"odinger equation
\begin{equation}
\label{sch1}
 iu_{t}+u_{xx}+\frac{u}{2} \left( |u|^2 -\frac{A}{t} \right)=0,
\end{equation}
with $A=a\lambda$ (resp. $A=\pm a+c_0^2$).

As we have already mentioned, this paper is devoted to the study the
stability properties of certain self-similar solutions $u_{f}$ in
(\ref{uf1}) of the 1D cubic Shr\"odinger equation (\ref{sch1}).

In order to give a precise statement of our results, we consider the
so-called pseudo-conformal transformation of (\ref{sch1}). Briefly,
given any solution $u$ of (\ref{sch1}), we define a new unknown $v$
as follows
\begin{equation}
 \label{v}
 u(t,x)= \mathcal{T}v(t,x)
 =\frac{e^{i\frac{x^2}{4t}}}{\sqrt{t}}
 \bar v \left( \frac{1}{t}, \frac{x}{t}   \right).
\end{equation}
Here, and elsewhere, an overbar denotes complex conjugation. Then
$v$ has to be a solution of
\begin{equation}
 \label{eqv}
 iv_{t} + v_{xx}+ \frac{v}{2t} \, (|v|^2-A)=0.
\end{equation}
In particular, solutions $u_f$ of (\ref{sch1}) correspond to
solutions $v_f$ of (\ref{eqv}) of the form
\begin{equation}
 \label{vf}
 v_{f}(t,x)= \bar f \left( \frac{x}{\sqrt{t}}  \right).
\end{equation}
Thus, we are reduced to prove the existence of appropriate
perturbations (modified wave operator) around the solutions $v_f$,
\medskip

The study of the stability properties of  solutions  of (\ref{eqv})
of the form (\ref{vf}) (and, consequently, of solutions of
(\ref{sch1}) of the form (\ref{uf1})) started in \cite{BV1}, and
carried on in  \cite{BV2} and \cite{BV3}. Precisely, in \cite{BV2},
the authors studied the stability of the  solution of (\ref{eqv})
with $A=c_0^2$ given by
$$
 v_{c_0}(t,x)= c_0.
$$
In \cite[Theorem~1.2]{BV2}), under the smallness assumption of the
parameter $c_0>0$, the authors prove that for any $t_0>0$, and any
given asymptotic state $u_{+}$ small in $L^1\cap L^2$ (w.r.t $t_0$
and $c_0$) the equation (\ref{eqv}) has a unique solution $v(t,x)$
in the interval $[t_0,\infty)$ which behaves like
$$
 v_1(t,x)= c_0+ e^{i\frac{c_0^2}{2}\log t} \left( e^{it\partial_{x}^{2}} u_{+}\right)(x)
$$
as t goes to infinity, in the sense that
\begin{equation}
 \label{lim-v_{c_0}}
 {\|v(t)- v_{1}(t)\|}_{L^2}=\mathcal{O}(t^{-\frac{1}{4}}),
 \qquad {\hbox{as}}\qquad
 t\longrightarrow \infty.
\end{equation}
Here $e^{it\partial_{x}^{2}}$ denotes the free propagator (see
notation below). In other words, they construct the so-called
(modified) wave operators. In \cite{BV3} this result is extended to first remove the smallness assumption on $c_0$ and
moreover to
consider also the asymptotic completeness of the scattering
operators. One of the fundamental ingredients in this paper is the
study of the linearized equation of (\ref{eqv}) around the constant
solution $v_{c_0}(t,x)=c_0$ ($A={c_0}^2$) given by
\begin{equation}
 \label{lin-c0}
 iz_t+z_{xx}+ \frac{c_0^2}{2t}(z+\bar z)=0.
\end{equation}
Notice that the coefficients in the above equation only depend on
$t$, and as a consequence this linearized equation can be analyzed
by computing the Fourier transform in space. Unfortunately, in our
case the linearized equation of (\ref{eqv}) around solutions $v_f$
of the form (\ref{vf}) is given by (see~(\ref{eqw}) below)
\begin{equation}
 \label{lin-eq}
 iz_t+z_{xx}+\frac{1}{2t}[
 (2|v_f|^2-A)z+ v_f^2 \bar z]=0,
\end{equation}
with coefficients that are also space
dependent\footnote{See~Proposition~\ref{f} below for the properties
of $v_f(t,x)=\bar f(x/\sqrt{t})$.}. This makes the analysis of the
linearized equation (\ref{lin-eq}) to be much more delicate.
Therefore, we put ourselves in the most simple situation. Firstly,
and as in \cite{BV2}, we will just consider the construction of the
wave operators. Secondly, we reduce our analysis to those
self-similar solutions $v_f(t,x)=\bar f(x/\sqrt{t})$  that have the
extra property that\footnote{The existence of $|f|_{\pm\infty}$ was
established in~\cite{GV}, see Proposition~\ref{f} below.}
$$
   |f|_{+\infty}=|f|_{-\infty} \qquad({\hbox{that is}} \quad |f|(+\infty)=|f|(-\infty)),
$$
and in particular those that the function $f$ is an odd or even
function. Even under this assumption the equation (\ref{lin-eq}) is
not so easy to handle. In fact, as we will see in the statement of
the main theorem below, we can not consider the asymptotic state
$u_{+}$ to be in $L^1\cap L^2$ as in \cite{BV2} and some weighted
$L^2$-spaces are necessary. This implies some loss in the rate of
decay given in (\ref{lim-v_{c_0}}). The main difficulty comes from
the appearance in the Duhamel term (\ref{Duhamel}) of $v_{f}^2$,
which depends on both the spatial and time variable. This differs
from the situation in \cite{BV2} where $v_f^2(t,x)= c_0^2$.

Before stating our results, we introduce some
conventions and function spaces.
We denote by $L^{2}(|x|^\gamma)$ and $L^2(\langle x \rangle^\gamma)$
the $L^2$-spaces with Lebesgue measure replaced by $|x|^{\gamma}\,
dx$, and $\langle x \rangle^\gamma\, dx=(1+|x|^2)^{\gamma/2}\, dx$,
respectively, i.e.,
$$
 L^{2}(|x|^\gamma)=\{\phi:\R\longrightarrow \C
 \ : \ {\|\phi\|}_{L^{2}(|x|^\gamma)}= \left(\int_{\R}  |\phi(x)|^2 |x|^\gamma\, dx\right)^{1/2}<\infty  \},
$$
and
$$
  L^{2}(\langle x \rangle^\gamma)=\{\phi:\R\longrightarrow \C
 \ : \ {\|\phi\|}_{L^{2}(\langle x \rangle^\gamma)}= \left(\int_{\R}  |\phi(x)|^2 (1+|x|^2)^{\gamma/2}\, dx\right)^{1/2}<\infty  \}.
$$
For $s\in \mathbb{N}^{\star}$, the Sobolev space $H^s$ is defined by
$$
  H^s=\{ f\in \mathcal{S}(\R)\ : \ \nabla^k f\in L^2(\R), \ \forall\, 0\leq k\leq s\}.
$$
The Fourier transform of $v$, $\hat v$, %or $\mathcal{F}v$
is defined by
$$
 \hat v(\xi)=\frac{1}{2 \pi}\int_{\R} e^{-ix\cdot \xi} v(x)\, dx,\qquad
$$
$e^{it\partial^2_{x}}u_0$ denotes the solution to the initial value
problem for the free 1D Schr\"odinger equation with initial data
$u_0$, defined by
\begin{equation}
 \label{sh-solution}
 \left(  e^{it\partial_{x}^{2}}  u_0\right)(x):= \int_{\R} e^{ix\xi}e^{-i\xi^2 t} \widehat{u_0}(\xi)\, d\xi,
\end{equation}
or, equivalently,
\begin{equation}
 \label{sh-solution1}
 \left(  e^{it\partial_{x}^{2}}  u_0\right)(x):=
 \frac{1}{\sqrt{4\pi i t}} \int_{\R} u_0(y) e^{i\frac{(x-y)^2}{4t}}\, dy.
\end{equation}
For any $u_{+}$, and $f$ solution of (\ref{eqf}) such that
$|f|_{+\infty}=|f|_{-\infty}$, we define $\tilde v_{f}$ by
\begin{equation} \label{tildevf}
 \tilde v_{f}(t,x)= v_{f}(t,x)+ e^{i\frac{\alpha}{2} \log t}
 \left( e^{it\partial_{x}^2} u_{+}\right) (x),
\end{equation}
with
$$
 v_{f}(t,x)=\bar f\left(  \frac{x}{\sqrt{t}} \right)
 \quad {\hbox{and}}\quad
 \alpha=2|f|^2_{\infty}-A.\footnote{
 If $|f|_{+\infty}=|f|_{-\infty}$ ($|f'|_{+\infty}=|f'|_{-\infty}$), then we will denote $|f|_{\pm\infty}$ by $|f|_{\infty}$ (respectively, $|f'|_{\pm\infty}$ by $|f'|_{\infty}$).
 %The existence of $|f|_{\pm\infty}$ and $|f'|_{\pm\infty}$ is established in \cite{GV}, see Proposition \ref{f}  below.
 }
$$

Our main result is the following
%%%%%%%%%%%%%%%%%%%%%%%%%%%%%%%%%%%%%%%%%%%%%%%%%%%%%%%%%%%%%%%%%%%%%%%%%%%%%%%%%%%%%%%%%%%%
\begin{theorem}
 \label{T1}
 Let $t_0>0$, and $0<\gamma<1$. There exists a (small) positive
 constant $B_0$, such that for any  $A$ and any $f$ solution of
 $$
  f''+i\frac{x}{2} f'+\frac{f}{2}(|f|^2-A)=0
 $$
such that $|f|_{-\infty}=|f|_{+\infty}$ with
${\|f\|}_{L^{\infty}}\leq B_0$, and $u_{+}$ small in $L^1\cap
L^2(\langle x \rangle^\gamma)$ with respect to $B_0$, $t_0$, and
$f$, the equation
\begin{equation}
 \label{T1a}
  iv_t+v_{xx}+\frac{v}{2t}(|v|^2-A)=0
\end{equation}
has a unique solution $v(t,x)$ in the time interval $[t_0, \infty)$
such that
$$
v-\tilde v_f\in
\mathcal{C}\left(
 [t_0,\infty), L^2(\mathbb{R})\right)
 \cap L^4\left(  [t_0,\infty), L^\infty(\mathbb{R})  \right).
$$
Moreover, the solution $v$  satisfies
\begin{equation}
 \label{T1b}
 {\| v-\tilde v_f \|}_{L^2(\mathbb{R})}+
 {\| v-\tilde v_f \|}_{L^4((t,\infty), L^\infty(\mathbb{R}))}=
 \mathcal{O}\left(  \frac{1}{t^{\frac{\gamma}{4}}} \right),
\end{equation}
as $t$ goes to infinity.

In addition, if $\partial_x u_{+}\in L^1\cap L^2(\langle x
\rangle^{\gamma})$, with $0<\gamma<1$, then $v-\tilde v_f \in H^1$
and
\begin{equation}
 \label{T1c}
{\| v-\tilde v_f  \|}_{H^1}= \mathcal{O}\left(  \frac{1}{t^{\frac{\gamma}{4}}} \right), \qquad t\rightarrow \infty.
\end{equation}
\end{theorem}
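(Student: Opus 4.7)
I would solve for $w := v - \tilde v_f$ by a Strichartz-based fixed point. Writing $v = v_f + \phi + w$ with $\phi(t,x) := e^{i(\alpha/2)\log t}(e^{it\partial_x^2}u_+)(x)$, one checks directly that (a) $v_f$ is an exact solution of \eqref{T1a} (this follows from the ODE satisfied by $f$ and the self-similar ansatz $v_f(t,x)=\bar f(x/\sqrt t)$), and (b) $\phi$ satisfies $i\phi_t + \phi_{xx} + (\alpha/2t)\phi = 0$. Substituting and using $\alpha = 2|f|_\infty^2 - A$, the equation for $w$ takes the form
\begin{equation*}
iw_t + w_{xx} = \mathcal{L}_t w + \mathcal{S}(t,x) + \mathcal{N}(w,\phi)(t,x),
\end{equation*}
with linearised operator
\begin{equation*}
\mathcal{L}_t w := -\frac{1}{2t}\bigl[(2|v_f|^2 - A)w + v_f^2\,\bar w\bigr],
\end{equation*}
a trilinear nonlinearity $\mathcal{N}$, and source
\begin{equation*}
\mathcal{S} = -\frac{1}{t}\bigl(|v_f|^2 - |f|_\infty^2\bigr)\phi - \frac{1}{2t}v_f^2\,\bar\phi + \text{(terms cubic in $\phi$)}/t.
\end{equation*}
The choice of $\alpha$ is precisely what kills the resonant piece of the $\phi$-linear source and leaves only factors that either decay spatially as $|x|\to\infty$ (via the asymptotics of $f$ from [GV]) or carry the oscillating factor $\bar\phi$.

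\textbf{Fixed point and estimates.} I would look for $w$ as the unique fixed point of
\begin{equation*}
\mathcal{T}[w](t) := -i\int_t^\infty e^{i(t-s)\partial_x^2}\bigl(\mathcal{L}_s w + \mathcal{S}(s) + \mathcal{N}(w,\phi)(s)\bigr)\,ds
\end{equation*}
in a small ball of the weighted space $X := \{\,w\in \mathcal{C}([t_0,\infty);L^2(\R))\cap L^4([t_0,\infty);L^\infty(\R)):\ \|w\|_X<\infty\,\}$ with
\begin{equation*}
\|w\|_X := \sup_{t\geq t_0}\, t^{\gamma/4}\bigl(\|w(t)\|_{L^2} + \|w\|_{L^4((t,\infty);L^\infty)}\bigr).
\end{equation*}
The one-dimensional Strichartz inequality on the admissible pair $(4,\infty)$ reduces everything to controlling the forcing in $L^1_t L^2_x + L^{4/3}_t L^1_x$. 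The three key estimates are: (i) for $\mathcal{L}_s w$, use $\|v_f\|_{L^\infty_{t,x}}\leq B_0$ to bound the contribution by $CB_0^2\int_t^\infty s^{-1}\|w(s)\|_{L^2}\,ds$, then close via the weight $s^{\gamma/4}$ and smallness of $B_0$; (ii) for $\mathcal{S}$, combine the dispersive bound $\|e^{is\partial_x^2}u_+\|_{L^\infty}\lesssim s^{-1/2}\|u_+\|_{L^1}$ with the weighted estimate $\|\langle x\rangle^\gamma e^{is\partial_x^2}u_+\|_{L^2_x}\lesssim \|\langle x\rangle^\gamma u_+\|_{L^2}$ (which follows from $x\,e^{is\partial_x^2}=e^{is\partial_x^2}(x+2is\partial_x)$ and interpolation), exploiting that $|v_f(s,x)|^2-|f|_\infty^2$ is small in the region $|x|\gg\sqrt s$; this gives the $t^{-\gamma/4}$ rate; (iii) for $\mathcal{N}$, a standard trilinear Strichartz estimate in $L^4_tL^\infty_x\times L^4_tL^\infty_x\times L^\infty_tL^2_x$. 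Together with the smallness of $B_0$ and of $\|u_+\|_{L^1\cap L^2(\langle x\rangle^\gamma)}$ these yield the contraction and the bound \eqref{T1b}.

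\textbf{$H^1$ upgrade and main obstacle.} For \eqref{T1c}, I differentiate the equation once in $x$ and close the same scheme for the pair $(w,\partial_x w)$: the commutator $[\partial_x,\mathcal{L}_s]$ produces a new term involving $\partial_x v_f = t^{-1/2}\bar f'(x/\sqrt t)$, of size $O(t^{-1/2})$ in $L^\infty$, absorbed by the existing Strichartz--Gr\"onwall machinery, while the additional source and trilinear terms are controlled from $\partial_x u_+\in L^1\cap L^2(\langle x\rangle^\gamma)$ exactly as in (ii). The hard part is the combination of two features absent from [BV2]: the coefficient $1/t$ in $\mathcal{L}_t$ fails to be integrable, and $v_f$ depends on $x$ without decay, so $\mathcal{L}_t$ is a genuinely long-range perturbation of the free Laplacian whose spectrum cannot be diagonalised by a Fourier transform in space. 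Both obstructions are overcome only by relinquishing some decay, which is precisely why the rate weakens from $t^{-1/4}$ in [BV2] to $t^{-\gamma/4}$ with $\gamma\in(0,1)$, and why the smallness of $B_0=\|f\|_{L^\infty}$ (a small-amplitude hypothesis on the self-similar profile) is indispensable.
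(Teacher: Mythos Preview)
Your fixed-point framework, the choice of space $X$ with weight $t^{\gamma/4}$, the treatment of $\mathcal{L}_s w$ via smallness of $B_0$, and the trilinear Strichartz estimate all match the paper's argument. The genuine gap is in step (ii): you treat the source $\mathcal{S}$ as if it consisted only of the term $-\frac{1}{t}(|v_f|^2-|f|_\infty^2)\phi$, for which your ``decay of the coefficient in $|x|\gg\sqrt s$'' argument is correct and is exactly what the paper does. But you say nothing specific about the second linear source term $-\frac{1}{2t}v_f^2\,\bar\phi$, and the tools you list do not handle it. The coefficient $v_f^2$ is bounded but does not decay, so no spatial weight is gained; meanwhile $1/t$ is not integrable on $[t_0,\infty)$, and the weighted bound $\|\langle x\rangle^\gamma e^{is\partial_x^2}u_+\|_{L^2}\lesssim\|\langle x\rangle^\gamma u_+\|_{L^2}$ you invoke is false uniformly in $s$ (the commutator $[x,e^{is\partial_x^2}]=2is\,e^{is\partial_x^2}\partial_x$ produces linear growth in $s$, and interpolation does not remove it). A pure size estimate on this term gives at best $B_0^2\int_t^\infty \|u_+\|_{L^2}\,\frac{ds}{s}=+\infty$.

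What the paper does instead is exploit the \emph{non-resonant} oscillation hidden in $\bar\phi$: since $\bar\phi=e^{-i(\alpha/2)\log\tau}\,e^{-i\tau\partial_x^2}\bar u_+$, the Duhamel integrand combines $e^{-i\tau\partial_x^2}$ from $\bar\phi$ with $e^{i(t-\tau)\partial_x^2}$ to produce, on the Fourier side, the phase $e^{2i\tau\xi^2}$. After replacing $v_f^2$ by its explicit leading asymptotic $v_{f,\infty}^2=|f|_\infty^2\,e^{-2i\delta\log|x/\sqrt\tau|}m(x)$ (the remainder lies in $L^2_x$ and is handled as in your first source term), the paper reduces the contribution to an oscillatory integral $A_t(\xi)=\int_t^\infty e^{2i\tau\xi^2}\tau^{-1-i\delta}\,d\tau$, which by stationary-phase/integration-by-parts satisfies $|A_t(\xi)|\lesssim (1+t\xi^2)^{-1}$. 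The remaining Fourier multiplier $e^{-2i\delta\log|\xi|}m(\xi)$ is a Calder\'on--Zygmund operator, and Pitt's inequality $\int|\xi|^{-\gamma}|\hat g(\xi)|^2\,d\xi\lesssim\int|x|^\gamma|g(x)|^2\,dx$ converts the resulting Fourier-side bound into the $L^2(|x|^\gamma)$ hypothesis on $u_+$, yielding the $t^{-\gamma/4}$ rate. This oscillatory-integral mechanism is the core of the proof and is absent from your sketch.
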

%%%%%%%%%%%%%%%%%%%%%%%%%%%%%%%%%%%%%%%%%%%%%%%%%%%%%%%%%%%%%%%%%%%%%%%%%%%%%%%%%%%%%%%%%%%%%%%%%%%%%%%%%%%%%%
%
The above result asserts  the existence of the modified wave
operator in the time interval $[t_0,\infty)$ with $t_0>0$, for any
given final data $u_{+}$ in $L^1\cap L^2(\langle x
\rangle^{\gamma})$ with $0<\gamma<1$, and any $f$ solution of
(\ref{eqf}) such that $|f|_{+\infty}=|f|_{-\infty}$, under smallness
conditions on ${\| f \|}_{L^\infty}$ and the data $u_{+}$.

%%%%%%%%%%%%%%%%%%%%%%%%%%%%%%%%%%%%%%%%%%%%%%%%%%%%%%%%%%%%%%%%%%%%
\begin{remark}
 \label{delta=0}
 As we said before, the new difficulties in the proof of this result with respect
  to those in \cite{BV2} come from the space dependence of the coefficients of linearized equation (\ref{lin-eq}). There is a particular case
  where this equation is as simple as (\ref{lin-c0}). This happens when the phase function $\phi_2(x)=(|f|_{\pm}^2-A)\log|x|$
  in Proposition~\ref{f} is identically zero, that is when $|f|^2_{\pm\infty}=A$.

  It turns out that in this particular case the corresponding curve $\Xn(t,x)$ is asymptotically flat at infinity, that
  is $T_3(\pm\infty)=0$ with $T_3(x)$ being the third component of the tangent vector to the curve $\Xn(t,x)$ (see~(\ref{key-identities})).
  In this situation, one could expect that the stronger results proved in \cite{BV3}
  could also be extended to this case. This will be studied elsewhere.
\end{remark}
%%%%%%%%%%%%%%%%%%%%%%%%%%%%%%%%%%%%%%%%%%%%%%%%%%%%%%%%%%%%%%%%%%%%%

Once $v$ has been constructed, we recover $u$ through the
pseudo-conformal transformation (\ref{v}). Precisely,  defining
$\tilde u_f$ as
\begin{equation}\label{u1}
 \tilde u_f (t,x)= \frac{e^{i\frac{x^2}{4t}}}{\sqrt{t}} f\left(  \frac{x}{\sqrt{t}}\right)
 +
 {\sqrt{\pi i}} \, e^{i\frac{\alpha}{2}\log t} \,\widehat{\overline{u_{+}}}\left( -\frac{x}{2}\right),
 \qquad \alpha=2|f|^2_{\infty}-A,
\end{equation}
as a consequence of Theorem~\ref{T1} we obtain the following:
%%%%%%%%%%%%%%%%%%%%%%%%%%%%%%%%%%%%%%%%%%%%%%%%%%%%%%%%%%%%%%%%%%%%%%%%%%%%%%%%%%%%%%%%%%%%%%
\begin{theorem}
 \label{T2}
 Let $\tilde t_0>0$, and  $0<\gamma<1$. There exist a (small) positive constant $B_0$, such that for
 any $A$ and any $f$  solution of
 $$
  f''+i\frac{x}{2} f'+\frac{f}{2}(|f|^2-A)=0
 $$
such that $|f|_{+\infty}=|f|_{-\infty}$ with
${\|f\|}_{L^{\infty}}\leq B_0$, and $u_{+}$ small in $L^1\cap
L^2(\langle x\rangle^\gamma)$ with respect to $B_0$, $\tilde t_0$
and $f$, the equation
\begin{equation}
 \label{eqs1}
  iu_t+u_{xx}+\frac{u}{2}(|u|^2-\frac{A}{t})=0.
\end{equation}
has a unique solution $u(t,x)$ in the interval $(0,\tilde t_0]$ such
that
$$
 u-\tilde u_f\in \mathcal{C}\left( (0, \tilde t_0], L^2(\R)\right) \cap
 L^4\left( (0, \tilde t_0], L^\infty(\R)  \right).
$$
Moreover, as $t$ goes to zero, the solution $u$  satisfies,
\begin{equation}
 \label{C1a}
 {\| u-\tilde u_f \|}_{L^2(\mathbb{R})}+
 {\| u-\tilde u_f \|}_{L^4((0,t), L^\infty(\mathbb{R}))}=
 \mathcal{O}( t^{\frac{\gamma}{4}}).
\end{equation}
In particular, as $t$ goes to zero
\begin{equation}
 \label{C1b}
 {\left\|
  u(t,\cdot) - \frac{e^{i\frac{(\cdot)^2}{4t}}}{\sqrt{t}}\, f\left(  \frac{\cdot}{\sqrt{t}} \right)
 \right\|}_{L^2(\R)}=\mathcal{O}(1),
 \qquad {\hbox{and}}
\end{equation}
\begin{equation}
 \label{C1c}
 \left\|
  \left| u(t,\cdot) - \frac{e^{i\frac{(\cdot)^2}{4t}}}{\sqrt{t}}\, f\left(  \frac{\cdot}{\sqrt{t}} \right)
 \right|^2  -
 \left|
  \sqrt{\pi}\,  \widehat{{\overline {u_+}}}\left( - \frac{\cdot}{2} \right)   \right|^2
 \right\|_{L^1(\R)}=\mathcal{O}(t^{\frac{\gamma}{4}}),
\end{equation}
but the limit of $\displaystyle{
u(t,x)-\frac{e^{i\frac{x^2}{4t}}}{\sqrt{t}}\, f\left(
\frac{x}{\sqrt{t}} \right) }$ does not exist in $L^2(\R)$ as $t$
goes to zero unless $\alpha=  2|f|^2_{\infty}-A  =0$.

Finally, if in addition $\partial_{x} u_{+} \in L^1\cap L^2(\langle
x \rangle^{\gamma})$, then

\begin{equation}
 \label{C1d1}
 |u(t,x)|\leq
 \frac{2}{\sqrt{t}} \left|
 f\left( \frac{x}{\sqrt{t}}\right)
 \right|,
\end{equation}
for all $x\in \R$ and $0<t<1$, and if $x\neq 0$ there exists
$t^{*}(x)>0$ such that for $0<t<t^{*}(x)$
\begin{equation}
 \label{C1d2}
 \frac{1}{2\sqrt{t}}\, \left|  f\left(  \frac{x}{\sqrt{t}} \right) \right|\leq
 |u(t,x)|.
\end{equation}
\end{theorem}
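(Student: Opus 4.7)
The plan is to reduce the problem to Theorem~\ref{T1} via the pseudo-conformal transformation~(\ref{v}). Setting $t_0 = 1/\tilde t_0$ and applying Theorem~\ref{T1} produces a unique $v$ solving (\ref{T1a}) on $[t_0,\infty)$ with the bound (\ref{T1b}). I then define $u(t,x) := \mathcal{T}v(t,x)$ on $(0,\tilde t_0]$. Since $\mathcal{T}$ intertwines (\ref{T1a}) and (\ref{eqs1}), $u$ solves (\ref{eqs1}), and uniqueness descends from that of $v$. A direct change of variables shows that $\mathcal{T}$ is a time-slice isometry, in the sense that
\[
\|\mathcal{T}w(t,\cdot)\|_{L^2_x} = \|w(1/t,\cdot)\|_{L^2_x}, \qquad \|\mathcal{T}w\|_{L^4((0,t),L^\infty)} = \|w\|_{L^4((1/t,\infty),L^\infty)}.
\]

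The crucial identification is that $\tilde u_f$ is the $t\to 0^{+}$ asymptotic profile of $\mathcal{T}\tilde v_f$. Expanding term by term,
\[
\mathcal{T}\tilde v_f(t,x) = u_f(t,x) + \frac{e^{i\frac{\alpha}{2}\log t}}{\sqrt{-4\pi i}}\int_\R e^{i\frac{xy}{2}} e^{-i\frac{y^2 t}{4}} \overline{u_+(y)}\,dy,
\]
which coincides with $\tilde u_f$ in the limit $t \to 0^{+}$, since the oscillatory factor $e^{-iy^2 t/4}$ tends to $1$ and the integral tends to $2\pi\widehat{\overline{u_+}}(-x/2)$. Using $|e^{-iy^2 t/4}-1| \le C(y^2 t)^{\gamma/2}$, Plancherel, and the hypothesis $u_+\in L^2(\langle y\rangle^\gamma)$, I would obtain $\|\mathcal{T}\tilde v_f - \tilde u_f\|_{L^2} = O(t^{\gamma/4})$; the matching $L^4 L^\infty$ estimate is handled analogously, the integrand now being explicit. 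Splitting $u - \tilde u_f = \mathcal{T}(v-\tilde v_f) + (\mathcal{T}\tilde v_f - \tilde u_f)$ and combining (\ref{T1b}) with the isometry above then delivers (\ref{C1a}).

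The remaining conclusions follow by soft arguments. Write $\phi_t(x) := \sqrt{\pi i}\,e^{i\frac{\alpha}{2}\log t}\widehat{\overline{u_+}}(-x/2)$, so that $\tilde u_f = u_f + \phi_t$ with $\|\phi_t\|_{L^2}$ independent of $t$; the bound (\ref{C1b}) is then the triangle inequality combined with (\ref{C1a}). For (\ref{C1c}), the algebraic identity
\[
|u-u_f|^2 - |\phi_t|^2 = |u-\tilde u_f|^2 + 2\operatorname{Re}\bigl((u-\tilde u_f)\overline{\phi_t}\bigr),
\]
Cauchy--Schwarz and (\ref{C1a}) yield the stated $L^1$-bound. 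The non-existence of an $L^2$ limit for $u - u_f$ is immediate, because $u - u_f - \phi_t \to 0$ in $L^2$ while $\phi_t$ carries the scalar oscillation $e^{i\frac{\alpha}{2}\log t}$, which has no limit as $t \to 0^{+}$ whenever $\alpha \ne 0$ and $u_+ \not\equiv 0$.

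The most delicate step is the pointwise bounds (\ref{C1d1})--(\ref{C1d2}). Under the added regularity on $u_+$, the $H^1$ bound (\ref{T1c}) translates through $\mathcal{T}$ into control of $\|J(u-\tilde u_f)\|_{L^2} = O(t^{\gamma/4})$, where $J := x + 2it\partial_x$ is the Galilean vector field adapted to the Schr\"odinger flow (one checks that $\|Ju\|_{L^2} = 2\|\partial_y v(1/t,\cdot)\|_{L^2}$). Applying the Gagliardo--Nirenberg inequality to the gauged function $e^{-i\frac{x^2}{4t}}(u-\tilde u_f)$, together with $\|\phi_t\|_{L^\infty} \le C\|u_+\|_{L^1}$, produces sup-norm control on $u-\tilde u_f$ and $\phi_t$ that, under the smallness of $B_0$ and $\|u_+\|$, remains small relative to $|u_f(t,x)|=t^{-1/2}|f(x/\sqrt{t})|$ in the pointwise regime needed for (\ref{C1d1}); for fixed $x\ne 0$ the value $|f(x/\sqrt{t})|$ stabilises as $t\to 0^{+}$, so shrinking $t$ below some $t^{*}(x)$ absorbs the perturbation and yields (\ref{C1d2}). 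The principal obstacle is organising the various $t$-dependent constants coming from $\mathcal{T}$ together with the smallness conditions in Theorem~\ref{T1}, so that the $H^1$ information propagates into genuinely pointwise estimates rather than only weaker $L^2$-based ones.
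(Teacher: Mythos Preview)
Your proposal is correct and follows the same route as the paper: pull Theorem~\ref{T1} back through the pseudo-conformal transformation with $t_0=1/\tilde t_0$, split $u-\tilde u_f=\mathcal{T}(v-\tilde v_f)+(\mathcal{T}\tilde v_f-\tilde u_f)$, and control the second piece via the bound $|e^{-iy^2t/4}-1|\lesssim (ty^2)^{\gamma/2}$ (the paper's Lemma~\ref{lemma1}); the conclusions (\ref{C1b})--(\ref{C1c}) and the non-existence of the $L^2$ limit are obtained in the same way. The only stylistic difference is in (\ref{C1d1})--(\ref{C1d2}): the paper stays on the $v$-side throughout, writing $|u(t,x)|=t^{-1/2}|v(1/t,x/t)|$ and using $\|v-\tilde v_f\|_{L^\infty}\lesssim\|v-\tilde v_f\|_{H^1}=O(\tau^{-\gamma/4})$ together with the dispersive decay of $e^{i\tau\partial_x^2}u_+$, which is equivalent to your $J$-operator argument (since $J$ is the pullback of $\partial_x$ under $\mathcal{T}$) but avoids the extra step of separately bounding $\mathcal{T}\tilde v_f-\tilde u_f$ in $L^\infty$.
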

%%%%%%%%%%%%%%%%%%%%%%%%%%%%%%%%%%%%%%%%%%%%%%%%%%%%%%%%%%%%%%%%%%
%
\begin{remark}
 \label{alpha=0}
 The case when the solution $f$ satisfies the condition $2|f|^2_{\infty}-A=0$ (that is, when $\alpha=0$)
 deserves a special attention. First of all, from (\ref{u1}) and (\ref{C1a}), we observe that $\tilde u_f(t,\cdot)$,
 and then $u(t,\cdot)$, will have a limit as long as such a limit exists for
 $$
  u_f(t,x)=\frac{e^{i\frac{x^2}{4t}}}{\sqrt{t}} f\left( \frac{x}{\sqrt{t}}  \right).
 $$
We will see in Section~\ref{section-pv} that precisely under the
same condition $2|f|^2_{\infty}-A=0$,  $u_f(t,\cdot)$ converges in
the distribution sense to $z_0\, \pv \frac{1}{x}$. As a consequence,
the initial value problem (IVP for short) given by (\ref{eqs1}) and
$$
 u(0,x)= z_0\, \pv \frac{1}{x} + \sqrt{\pi i}\widehat{\overline{u_{+}}}\left( -\frac{x}{2}\right)
$$
is well-posed in appropriate function spaces. See Theorem~\ref{T4}
in Section~\ref{section-pv} for the precise statement.
\end{remark}
%
%
%%%%%%%%%%%%%%%%%%%%%%%%%%%%%%%%%%%%%%%%%%%%%%%%%%%%%%%%%%%%%%%%%%%%
%
\begin{remark}
 \label{smallness}
 Recall that, for the 1d-cubic NLS equations associated to solutions of LIA of
 the form (\ref{sol2})-(\ref{A}), the coefficient $A$ is linked to
 the initial conditions and the parameter $a$ through the identity
 (\ref{A1}).

From the identity (\ref{A1}) and the conservation law for $f$ stated
in Proposition~\ref{f}, we conclude that the smallness assumption
for ${\| f \|}_{L^\infty}$ can be achieved  by considering initial
data $(\Gn(0), \Gn'(0), a)$ sufficiently small.
\end{remark}
%%%%%%%%%%%%%%%%%%%%%%%%%%%%%%%%%%%%%%%%%%%%%%%%%%%%%%%%%%%%%%%%%%%%
Notice that the solutions $u$ given by Theorem~\ref{T2} do not have
a trace at $t=0$ (see comment after (\ref{C1c})). Nevertheless,
associated to these solutions we are able to construct a family of
curves $\Xn(t,x)$ solutions of LIA which do have a limit at $t=0$.
The precise statement of the result is the following:
\begin{corollary}
 \label{T3}
 % Existence of the trace $\Xn_0(x)$ for the perturbed solutions.
 Let $0<\gamma<1$ and  $u_{+},\ \partial_x u_{+}\in  L^1\cap L^2(\langle x
 \rangle^{\gamma})$. Then, under the smallness assumptions of
 Theorem~\ref{T2}, for $0<t<\tilde t_0$ there exists a unique solution
 $\Xn(t,x)$ of LIA such that the filament function of $\Xn(t,x)$ is
 the function $u(t,x)$ given by Theorem~\ref{T2}, $\Xn(\tilde t_0, 0)=(0, 0, 0)$
 and $ \Xn_{x}(\tilde t_0,0)=(1,0,0)$.

 Moreover,
 \begin{itemize}
 \item[{\it{i)}}] the curvature of the curve $\Xn(t,x)$, $c(t,x)$, satisfies
 $$
  |c(t,x)|\leq
  \frac{c_1}{\sqrt{t}}
 $$
 for all $x\in \R$ and $0<t<1$, and if $x\neq 0$, there exists
 $t^{*}(x)>0$ such that for all $0<t< t^{*}(x)$
 $$
    \frac{c_2}{\sqrt{t}} \leq |c(t,x)|.
 $$
 \item[{\it{ii)}}] In addition, there exists  a unique $\Xn_0(x)$ such that
 $$
  |\Xn(t,x)-\Xn_0(x)|\leq c_3\sqrt{t},
 $$
 uniformly on the interval $(-\infty, \infty)$, with $\Xn_0(x)$
 a Lipschitz continuous function.
 \end{itemize}
 Here, $c_1, c_2$, and $c_3$ are non-negative  constants.
\end{corollary}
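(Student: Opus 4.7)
The plan is to reconstruct the curve $\Xn(t,\cdot)$ from its prescribed filament function $u(t,\cdot)$ by the inverse Hasimoto procedure, then read off the curvature bounds directly from the pointwise bounds on $|u|$ in Theorem~\ref{T2}, and finally exploit the binormal form $\Xn_t=c\,\bn$ of (\ref{LIA}) to control the motion of $\Xn(\cdot,x)$ as $t\to 0^+$. Concretely, for each fixed $t\in(0,\tilde t_0]$ I would solve the parallel-frame linear $x$-system associated with Hasimoto's substitution,
\begin{equation*}
\Tn_x=\mathrm{Re}(\bar u\,N),\qquad N_x=-u\,\Tn,\qquad N=\nn+i\bn,
\end{equation*}
with the initial orthonormal frame at $x=0$ chosen so that $\Tn(\tilde t_0,0)=(1,0,0)$ and the $t$-dependence of the frame at $x=0$ determined by the LIA-compatible evolution that keeps the filament function equal to $u$. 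The hypothesis $\partial_x u_+\in L^1\cap L^2(\langle x\rangle^\gamma)$, together with Theorem~\ref{T2} (specifically (\ref{T1c}) transferred through the pseudo-conformal transformation (\ref{v})), places $u(t,\cdot)$ in $H^1_{\mathrm{loc}}(\R)\hookrightarrow\mathcal{C}^0(\R)$, so the system admits a unique $\mathcal{C}^1$-in-$x$ solution preserving orthonormality. Setting $\Xn(t,x)=\Xn(t,0)+\int_0^x\Tn(t,y)\,dy$, and fixing $\Xn(t,0)$ by time-integrating $\Xn_t(t,0)=c(t,0)\bn(t,0)$ from $\Xn(\tilde t_0,0)=0$, yields the reconstructed curve; the standard NLS/LIA compatibility makes it a solution of (\ref{LIA}) with filament function $u$, and uniqueness is automatic from ODE uniqueness for the frame.

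For (i), the Hasimoto definition gives $c(t,x)=|u(t,x)|$, so the upper bound $|c(t,x)|\leq c_1/\sqrt{t}$ follows from (\ref{C1d1}) with $c_1=2\|f\|_{L^\infty}\leq 2B_0$, and the lower bound for $x\neq 0$, $0<t<t^*(x)$, from (\ref{C1d2}). For (ii), the binormal equation $\Xn_t=\Xn_x\times\Xn_{xx}=c\,\bn$ gives $|\Xn_t(t,x)|=c(t,x)\leq c_1/\sqrt{t}$ uniformly in $x$, so for $0<t_1<t_2<1$
\begin{equation*}
|\Xn(t_2,x)-\Xn(t_1,x)|\leq \int_{t_1}^{t_2}\frac{c_1}{\sqrt{s}}\,ds =2c_1(\sqrt{t_2}-\sqrt{t_1}),
\end{equation*}
uniformly in $x\in\R$. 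Hence $\Xn(\cdot,x)$ is uniformly Cauchy as $t\to 0^+$; letting $t_1\to 0^+$ with $t_2=t$ gives the estimate $|\Xn(t,x)-\Xn_0(x)|\leq c_3\sqrt{t}$ with $c_3=2c_1$. Since $|\Xn_x(t,\cdot)|=|\Tn(t,\cdot)|=1$ makes each $\Xn(t,\cdot)$ uniformly $1$-Lipschitz, this property passes to the uniform limit $\Xn_0$.

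The main obstacle is the inverse Hasimoto step itself: because $f$ does not decay at infinity, neither $\tilde u_f$ nor $u$ lies in any global $L^2$ or $H^1$ space, so one cannot invoke an off-the-shelf reconstruction theorem and must instead work locally in $x$. The $H^1_{\mathrm{loc}}$ regularity supplied by the extra hypothesis on $\partial_x u_+$ (combined with $\|f\|_{L^\infty}+\|f'\|_{L^\infty}<\infty$ from Proposition~\ref{spiral} via (\ref{key-identities})) is what lets one simultaneously justify the orthonormality of the reconstructed frame and establish the compatibility between its $x$- and $t$-evolutions that is needed for $\Xn$ to genuinely solve (\ref{LIA}).
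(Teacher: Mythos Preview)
Your proposal is correct and follows essentially the same route as the paper: reconstruct $\Xn$ from $u$ via a parallel (Bishop) frame, read off (i) directly from (\ref{C1d1})--(\ref{C1d2}) and $c=|u|$, and obtain (ii) from the uniform bound $|\Xn_t|\le c_1/\sqrt{t}$ together with the $1$-Lipschitz property $|\Xn_x|=|\Tn|=1$. The only notable difference is cosmetic: when bounding $|\Xn_t|$ you invoke the Frenet form $\Xn_t=c\,\bn$, whereas the paper stays in the parallel frame and computes $|\Xn_t|=|\Tn\times\Tn_x|=|\alpha\mathbf{e}_2-\beta\mathbf{e}_1|=\sqrt{\alpha^2+\beta^2}=|u|$ directly, which sidesteps any reference to $\bn$ at points where $c$ may vanish; the conclusion is the same in either case.
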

The proofs of all these results are given in Section~2. In Section~3
we state and prove Theorem~\ref{T4} about the well-posedness of the
IVP given by (\ref{eqs}) and $u(0,x)= z_0 \pv \frac{1}{x}$ for some
values of $z_0\in \C\setminus \{0\}$. The question of well-posedness
of the $1d$ cubic NLS for spaces that include $L^2$ was started in
\cite{VV}, and then extended in \cite{Gru} to all the range of
subcritical scales. In fact it was proved in \cite{KPV} that when
the initial datum is given by Dirac-delta function, the IVP is
ill-posed due to the appearance of a logarithmic correction in the
phase. This phase can be canceled out by modifying the equation with
an extra factor $A(t)=c_0/t$ for some constant $c_0$ as in equation
(\ref{eqs}). As we said before this modification naturally appears
when the $1d$ cubic NLS is obtained from LIA through the Hasimoto
transformation. However, even with this modification it was proved
in \cite{BV2} and \cite{BV3} that the problem is still ill-posed for
the Dirac-delta. The reason is the same that the one pointed out in
the statement of Theorem~\ref{T2}.  In \cite{KPV} the ill-posedness
was obtained as a consequence of the invariance of NLS by  galilean
transformations. The same ideas imply that, if the notion of
well-posedness includes the uniform continuity of the map
datum-solution, then the class of  Sobolev spaces of negative index
has to be excluded, see also \cite{KPV}. However the existence of a
priori upper bounds for the Sobolev norm of the solution, in terms
of the Sobolev norm of the datum, for arbitrarily large data, and
for sufficiently short time can be proved, see \cite{CCT}  and
\cite{KT}.

%%%%%%%%%%%%%%%%%%%%%%%%%%%%%%%%%%%%%%%%%%%%%%%%%%%%%%%%%%%%%%%%%%%%%%%%%%%%%%%%
\section{Modified wave operators in mixed norm spaces.}
\label{construction}
%%%%%%%%%%%%%%%%%%%%%%%%%%%%%%%%%%%%%%%%%%%%%%%%%%%%%%%%%%%%%%%%%%%%%%%%%%%%%%%%
%
In order to find the ``appropriate" modified  wave operators for
$v-v_{f}$ in the setting of the equation (\ref{eqv}), we follow the
arguments given in \cite{BV2}. Briefly,  write
$$
 v=v_{f}+w,
$$
with $v$ and $v_{f}$ solutions of the Schr\"odinger equation
$$
 iv_{t}+ v_{xx}+ \frac{v}{2t} \, \left( |v|^2 -A  \right)=0.
$$
Then, $w$ has to be a solution of
$$
 iw_{t}+ w_{xx}+\frac{1}{2t}
 \left[
 (|v_f|^2-A)w+(v_{f}\bar w + \bar v_{f} w +|w|^2) (v_{f}+w)
 \right]=0,
$$
or,
\begin{equation}
 \label{eqw}
  iw_{t}+ w_{xx}+\frac{1}{2t}
 \left[
 (2|v_f|^2-A)w+v_{f}^2\bar w + 2 v_{f} |w|^2 + \bar v_{f} w^2+|w|^2 w
 \right]=0.
\end{equation}
The linear term $(2|v_f|^2-A) w/2t$ (in the above equation) is
resonant and, as we will continue to show, it is the responsible for
a logarithmic correction of the phase.

In order to deal with the resonant structure of this term, here and
in what follows, we assume that $f$ is such that
$|f|_{+\infty}=|f|_{-\infty}$, and we write the above equation
equivalently as
\begin{multline}
  iw_{t}+ w_{xx}+\frac{1}{2t}
 \left[
 (2|f|^2_{\infty}-A)w+ 2(|v_f|^2-|f|^2_{\infty}) w+v_{f}^2\bar w +
 \right.
 \nonumber \\
 \left.
 2 v_{f} |w|^2 + \bar v_{f} w^2+|w|^2 w
 \right]=0.
\end{multline}
Observe that $|f|^2_{\pm\infty}$ is nothing but the limit of
$|v_f(t,x)|^2$ as $x\rightarrow \pm\infty$, i.e.
$$
 |f|^2_{\infty}=\lim_{x\rightarrow \pm \infty}
 \left|\bar f\left( \frac{x}{\sqrt{t}}\right)\right|^2=
 \lim_{x\rightarrow\pm\infty} |v_f(t,x)|^2.
$$
Then, if we define a new function $u$ as
$$
 u(t,x)=w(t,x)e^{-i\frac{\alpha}{2}\log t}, \quad t>0,\quad {\hbox{with}}\quad
 \alpha=2|f|^2_{\infty}-A,
$$
the function $u$ has to be a solution of
\begin{multline}
   iu_{t}+ u_{xx}+
 \frac{1}{t} (|v_f|^2-|f|^2_{\infty})u +
 \frac{v_f^2}{2t}e^{-i\alpha \log t} \bar u +
  \nonumber \\
 \frac{1}{2t}
 \left[
 2 v_{f} e^{-i\frac{\alpha}{2}\log t}|u|^2 + \bar v_{f} e^{i\frac{\alpha}{2} \log t}u^2+|u|^2 u
 \right]=0.
\end{multline}
Although the first linear term in (\ref{equ}), that is
$$
  (|v_f|^2-|f|^2_{\infty})\frac{u}{t}
$$
is still resonant, the structure of $|v_{f}(t,x)|^2-|f|^2_{\infty}$
allows us to treat this term as a perturbative  term in the Duhamel
formula for the solution and, to consider as initial guess
$$
 u(t,x)\thickapprox \left( e^{it\partial^2_{x}} u_{+}\right) (x),
 \qquad {\hbox{as}}\qquad
 t\rightarrow +\infty.
$$
Summing up, for any given asymptotic state $u_{+}$, we consider the
following guess for the perturbation
$$
 e^{i\frac{\alpha}{2}\log t} \left( e^{it\partial^2_{x}} u_{+} \right)(x),
 \qquad \alpha=2|f|^2_{\infty}-A,
$$
and define $\tilde v_f$ to be
$$
 \tilde v_{f}(t,x)= v_{f}(t,x)+ e^{i\frac{\alpha}{2} \log t} \left( e^{it\partial_{x}^2} u_{+}\right)(x),
 \quad {\hbox{with}}\quad
 \alpha=2|f|^2_{\infty}-A.
$$
%
%%%%%%%%%%%%%%%%%%%%%%%%%%%%%%%%%%%%%%%%%%%%%%
\subsection{Preliminaries.}
\label{Preliminaries}
%%%%%%%%%%%%%%%%%%%%%%%%%%%%%%%%%%%%%%%%%%%%%%%
%

Using the notation introduced previously, given $u_{+}$ and $f$
solution of (\ref{eqf}) such that $|f|_{+\infty}=|f|_{-\infty}$, we
define
\begin{equation}\label{vftilde}
 \tilde v_{f} (t,x)= v_{f}(t,x)+ e^{i\frac{\alpha}{2}\log t}\left(e^{it\partial^{2}_{x}} u_{+}\right)(x),
\end{equation}
where
\begin{equation}
 \label{vftilde1}
 v_{f}(t,x)=\bar f \left( \frac{x}{\sqrt{t}} \right), \qquad
 {\hbox{and}}\quad
 \alpha=2|f|^2_{\infty}-A
\end{equation}
(recall that if $|f|_{+\infty}=|f|_{-\infty}$, then we write
$|f|_{\infty}$ for  $|f|_{\pm\infty}$).

In order to prove the existence of a solution $v$ of
\begin{equation}
 \label{eqvF}
 iv_{t}+v_{xx}+\frac{v}{2t}(|v|^2-A)=0,
\end{equation}
``close" to $\tilde v_{f}$, as $t$ goes to $\infty$, following the
steps in the previous lines, we write
\begin{equation}
 \label{change1}
 v=v_f + e^{i\frac{\alpha}{2}\log t}u,
\end{equation}
so that the function $u$ has to be a solution of is
\begin{multline}
 \label{equ}
   iu_{t}+ u_{xx}+
 \frac{1}{t} (|v_f|^2-|f|^2_{\infty})u +
 \frac{v_f^2}{2t}e^{-i\alpha \log t} \bar u +
 \\
 \frac{1}{2t}
 \left[
 2 v_{f} e^{-i\frac{\alpha}{2}\log t}|u|^2 + \bar v_{f} e^{i\frac{\alpha}{2} \log t}u^2+|u|^2 u
 \right]=0.
\end{multline}
Now, notice that under the change of variables (\ref{change1}) and
the definition of $\tilde v_f$ in (\ref{vftilde}), we have that
\begin{eqnarray*}
 v-\tilde v_f
 &=&
 (v_f+ e^{i\frac{\alpha}{2}\log t}u)- (v_f+ e^{i\frac{\alpha}{2}\log t} (e^{it\partial_x^2} u_{+}))
    \\
 &=&
 e^{i\frac{\alpha}{2}\log t} (u-e^{it\partial_{x}^{2}} u_{+}).
\end{eqnarray*}
Therefore, we are reduced to prove the existence of a solution of
(\ref{equ}) ``close" to $e^{it\partial_{x}^{2}} u_{+}$. To this end,
it is convenient to perform a further change of variables.
Precisely, we write
\begin{equation}\label{change2}
 u(t,x)= z(t,x)+ z_{+}(t,x),
 \qquad {\hbox{with}}\qquad z_{+}(t,x)= e^{it\partial_{x}^2} u_{+}.
\end{equation}
Under the change of variable given by (\ref{change2}), equation
(\ref{equ}) becomes
\begin{equation}\label{eqz}
 iz_t +z_{xx}=\frac{1}{2t} \left\{ F_0(z_{+}) -F_{1}(z)-NLT(z+z_{+})\right\}
\end{equation}
where $F_{0}$, $F_1$ and $NLT$ are defined by
\begin{equation} \label{source}
  F_{0}(z_{+})= 2(|v_f|^2-|f|^2_{\infty})z_{+}
             +v_f^2 e^{-i\alpha\log t} \bar z_{+},
\end{equation}
\begin{equation}\label{linear}
 F_1(z)=2(|v_f|^2-|f|^2_{\infty})z +v_f^2 e^{-i\alpha\log t}\bar z,
\end{equation}
and
\begin{equation}\label{NLT}
 NLT(u)=
 2v_f e^{-i\frac{\alpha}{2}\log t} |u|^2 +
 \bar v_f e^{i\frac{\alpha}{2}\log t}u^2+|u|^2u.
\end{equation}
Hence, it suffices to prove the existence of a fixed point of the
operator
\begin{eqnarray}\label{opB}
 Bz(t)
 &=&
 \frac{i}{2}\int_{t}^{\infty} e^{i(t-\tau)\partial_x^2} F_{0}(z_{+})\, d\tau
 - \frac{i}{2}\int_{t}^{\infty} e^{i(t-\tau)\partial_x^2} F_1(z)\, \frac{d\tau}{\tau}
  \nonumber \\
 &-&
 \frac{i}{2}\int_{t}^{\infty} e^{i(t-\tau)\partial_x^2} NLT(z+z_{+})\, \frac{d\tau}{\tau}
\end{eqnarray}
in an appropriate space.

The Duhamel terms which determine the behaviour of the operator $B$
are the ones related to $F_{0}(z_{+})$, the source term. Notice that
in our case we are left to estimate two linear source terms
(see~(\ref{source})). Namely, we need to estimate in an appropriate
space
$$
 \int_{t}^{\infty} e^{i(t-\tau)\partial_{x}^{2}}
 \left(
 (|v_f|^2-|f|^2_{\infty}) z_{+}
 \right)\, \frac{d\tau}{\tau}
$$
and
\begin{equation}
 \label{Duhamel}
 \int_{t}^{\infty} e^{i(t-\tau)\partial_{x}^{2}}
 (v^2_f e^{-i\alpha \log t} \bar z_{+})\, \frac{d\tau}{\tau},
 \qquad {\hbox{with}}\qquad z_{+}=e^{it\partial_x^2} u_{+}.
\end{equation}
The structure of $|v_f(t,x)|-|f|^2_{\infty}$ allows us to treat the
first term as an ``error" term. The second linear term has the extra
difficulty of being dependent of the spatial variable $x$ through
the function $v_f(t,x)=\bar f(x/\sqrt{t})$. In order to estimate the
linear term involving $v^2_f$, we will use some known properties of
the function $v_f$ (more precisely, in our arguments we will make
use of the asymptotic behaviour as $x\rightarrow \infty$ of $v_f$).
Recall that the function $v_f$ is defined by
$$
 v_{f}(t,x)=\bar f \left(  \frac{x}{\sqrt{t}}\right),
$$
where $f$ is any given solution of the equation
\begin{equation}
 \label{eqf1}
 f''+ i \frac{x}{2} f' + \frac{f}{2}\, (|f|^2-A)=0,
 \qquad A\in \mathbb{R}.
\end{equation}
Equation (\ref{eqf1}) was previously considered in~\cite{GV}. The following result summarizes some of the properties
of the solutions $f$ of (\ref{eqf1}) obtained in the latter paper.
%
%----------------- Properties of f -------------------------
%
\begin{proposition}\label{f}\
Let $f$ be a solution of the equation (\ref{eqf1}). Then
\begin{itemize}
\item[{\it i)}] $f(x)$, and $f'(x)$ are bounded globally defined
functions. Moreover, there exists $E(0)> 0$ such that the
identity
$$
 |f'|^2+\frac{1}{4}(|f|^2-A)^2=E(0)
$$
holds true for all $x\in\R$.
\item[{\it ii)}] The limits
$\lim_{x\rightarrow \pm\infty}|f|^2(x)=|f|^2_{\pm\infty}$ and
$\lim_{x\rightarrow \pm\infty}|f'|^2(x)=|f'|^2_{\pm\infty}$ do
exist and
$$
 |f(x)|^2 -|f|^{2}_{\pm\infty}=O\left(  \frac{1}{|x|}   \right),
 \qquad {\hbox{as}}\qquad x\rightarrow \pm \infty.
$$
 \item[{\it {iii)}}] If $|f|_{+\infty}\neq 0$ or
 $|f|_{-\infty}\neq 0$, then
 $$
  f(x)=
  |f|_{\pm\infty}\, e^{ic_\pm}\, e^{i\ps}
  + 2i\, |f'|_{\pm\infty} \,
  {\frac{e^{id_\pm}}{x}}\, e^{i\pt}+ O\left( {\frac{1}{|x|^2}} \right),
  $$
  as $x\to\pm\infty$.
 \end{itemize}
 Here, $|f|_{\pm\infty}$, $|f'|_{\pm\infty}\geq 0$,
 and $c_{\pm}$ and $\ d_{\pm}$ are arbitrary
 constants in $[0,2\pi)$,
 $$
 \ps= (|f|^2_{\pm\infty}-A)\log |x|,\qquad {\hbox{and}}\qquad
 \pt= -(x^2/ 4)- (2|f|^2_{\pm\infty}-A)\log |x|.
 $$
\end{proposition}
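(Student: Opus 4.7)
The plan is to treat the three parts in order, with the conservation law of (i) as the backbone.

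First, for (i), I verify the conservation law by direct differentiation. From $f''=-i\frac{x}{2}f'-\frac{f}{2}(|f|^2-A)$ one computes $2\operatorname{Re}(\bar f'f'')=-(|f|^2-A)\operatorname{Re}(\bar f'f)=-\tfrac12(|f|^2-A)(|f|^2)'$, while $\tfrac{d}{dx}\bigl[\tfrac14(|f|^2-A)^2\bigr]=\tfrac12(|f|^2-A)(|f|^2)'$. These cancel, so $E(x):=|f'|^2+\tfrac14(|f|^2-A)^2$ is constant. Since both summands are non-negative, this immediately yields $|f'|\leq\sqrt{E(0)}$ and $|f|^2\leq A+2\sqrt{E(0)}$, giving global boundedness of $f$ and $f'$.

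For (ii), I introduce the angular-momentum-like quantity $J(x):=\operatorname{Im}(\bar f f')$. Using the ODE, $J'=\operatorname{Im}(\bar f f'')=-\tfrac{x}{2}\operatorname{Re}(\bar f f')=-\tfrac{x}{4}(|f|^2)'$, and integration by parts gives
\begin{equation*}
J(x)+\tfrac{x}{4}|f(x)|^2 \;=\; J(0)+\tfrac14|f(0)|^2+\tfrac14\int_0^x|f(s)|^2\,ds.
\end{equation*}
On the other hand, splitting $\bar f f'$ into real and imaginary parts yields the pointwise identity $\tfrac14\bigl((|f|^2)'\bigr)^2+J^2=|f|^2|f'|^2=|f|^2\bigl(E(0)-\tfrac14(|f|^2-A)^2\bigr)$, which closes a system for $(|f|^2,J)$. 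Combined with the integral relation and boundedness from (i), a bootstrap argument shows that $|f|^2$ has a limit $|f|^2_{\pm\infty}$ at $\pm\infty$ with $\bigl||f(x)|^2-|f|^2_{\pm\infty}\bigr|=O(1/|x|)$; the existence of the limit of $|f'|^2$ then follows from the conservation law.

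For (iii), assume $|f|_{\pm\infty}\neq 0$ so that $f$ is non-vanishing for $|x|$ large, and write $f=ae^{i\phi}$ with $a=|f|$. The imaginary part of the equation becomes $(a^2\phi')'=-\tfrac{x}{4}(a^2)'$ (equivalent to the $J$ identity), and the real part reads $a''-a(\phi')^2-\tfrac{x}{2}a\phi'+\tfrac{a}{2}(a^2-A)=0$. Setting $L=|f|^2_{\pm\infty}$ and discarding the $a''$ term, the characteristic quadratic for $\phi'$ in the limit $a^2\to L$ has two roots: a slow branch $\phi'_s\sim(L-A)/x$ producing the logarithmic phase $\phi_2=(L-A)\log|x|$, and a fast branch $\phi'_f\sim-\tfrac{x}{2}-(2L-A)/x$ producing $\phi_3=-x^2/4-(2L-A)\log|x|$. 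A two-scale WKB ansatz $f\sim|f|_\pm e^{ic_\pm}e^{i\phi_2}+(2i|f'|_\pm/x)e^{id_\pm}e^{i\phi_3}+O(1/|x|^2)$ fixes the amplitudes: the slow branch contributes only $O(1/x^2)$ to $|f'|^2$, so the fast-branch amplitude must decay like $2|f'|_\pm/|x|$ to match $|f'|^2_{\pm\infty}$. The main obstacle is the $O(1/|x|)$ estimate in (ii): the integral formula for $J$ requires an a priori $O(1)$ bound on $J+\tfrac{x}{4}(|f|^2-L)$, which is \emph{equivalent} to the claimed decay, so a bootstrap or fixed-point setup in a weighted space at infinity is essential. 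Part (iii) is then a delicate two-scale calculation; tracking only one branch would miss the $1/|x|$ correction, since the two branches interfere in $|f|^2$ at that order.
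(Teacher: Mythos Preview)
The paper does not actually prove this proposition: it is quoted from \cite{GV}, where the analysis of the ODE $f''+i\tfrac{x}{2}f'+\tfrac{f}{2}(|f|^2-A)=0$ was carried out in detail. So there is no in-paper argument to compare against; what you have written is your own attempt at the result that the present paper simply imports.

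On the substance of your attempt: part (i) is complete and correct, and is exactly the conservation law used in \cite{GV}. For parts (ii) and (iii), your ingredients are the right ones --- the quantity $J=\operatorname{Im}(\bar f f')$, the integral identity coming from $J'=-\tfrac{x}{4}(|f|^2)'$, the closed system for $(|f|^2,J)$, and the polar decomposition leading to the two phase branches $\phi_2$ and $\phi_3$ --- and this is essentially the route taken in \cite{GV}. However, as you yourself note, the crucial step in (ii), namely the passage from boundedness of $|f|^2$ to the existence of $|f|^2_{\pm\infty}$ with rate $O(1/|x|)$, is not carried out: you correctly identify that the integral formula for $J$ only yields the desired decay \emph{once} one already knows $J+\tfrac{x}{4}(|f|^2-L)=O(1)$, and you defer this to an unspecified ``bootstrap or fixed-point setup.'' That is precisely the technical heart of the argument in \cite{GV}, and without it (ii) is incomplete. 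Similarly, (iii) is a heuristic WKB picture rather than a proof: matching the amplitudes and controlling the $O(1/|x|^2)$ remainder requires genuine work beyond identifying the two roots of the characteristic quadratic.

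In short: your outline is sound and aligned with the original source, but (ii) and (iii) remain sketches with the hard analytic steps deferred. A minor slip: in your integrated identity for $J$, the term $\tfrac14|f(0)|^2$ should not appear; integration by parts of $\int_0^x \tfrac{s}{4}(|f|^2)'\,ds$ gives $J(x)+\tfrac{x}{4}|f(x)|^2=J(0)+\tfrac14\int_0^x|f|^2$.
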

%
%-------------------------------------------------------
%
We continue to recall the one-dimensional dispersive and Strichartz
estimates which will used throughout this section (see~\cite{Caz}).
In what follows, we call a pair $(p,q)$ of exponents
{\it{admissible}} if
     $$
      p\geq 2, \qquad q\leq \infty,
      \qquad {\hbox {and}}\qquad
      \frac{2}{p}+\frac{1}{q}= \frac{1}{2}.
     $$
\smallskip

\noindent {\it {i)}} {\sl {1d-Strichartz estimates. }} Let $I$ be a
time interval, then
 \begin{equation}
  \label{stri-h}
  \| e^{it\partial_{x}^{2}} f \|_{L^{p_1}\left( \mathbb{R};L^{q_1}  \right)}
  \leq C\, \| f \|_{L^2(\mathbb{R})}
 \end{equation}
 and
 \begin{equation}
  \label{stri-inh}
  \left\| \int_{s\in I; s\leq t} e^{i(t-s)\partial_{x}^{2}} F(s)\, ds \right\|_{L^{p_1}\left( I;L^{q_1}  \right)}
  \leq C\, \| F \|_{L^{p_2'}\left( I;L^{q_2'} \right)},
 \end{equation}
 for any admissible exponents $(p_i, q_i)$, $i\in \{ 1,2  \}$.
\smallskip

\noindent {\it {ii)}} {\sl {Dispersion estimate. }}
 \begin{equation}
  \label{dis}
  {\| e^{it\partial_{x}^{2}} f\|}_{L^{\infty}(\mathbb{R})}\leq C\, t^{-1/2} {\|f \|}_{L^{1}(\mathbb{R})}
 \end{equation}
\smallskip

\noindent {\it {iii)}} {\sl {$L^2$-Conservation law. }}
 \begin{equation}
  \label{L2}
  {\| e^{it\partial_{x}^{2}} f\|}_{L^{2}(\mathbb{R})}= {\|f \|}_{L^{2}(\mathbb{R})}
 \end{equation}
The constant $C$ in the above inequalities depend on the exponents
involved in the estimates.

%------------------- Some lemmas --------------------------------
%
The lemmas below will be also used in the construction of the
modified wave operators. It is immediate to prove the following:
\begin{lemma}
 \label{lemma1}
 Let $0\leq \beta\leq 4$, $f\in L^2(|x|^\beta)$, and $t>0$. Then
$$
 {\|
  f(\cdot) (e^{-i\frac{(\cdot)^2}{4t}}-1)
 \|}_{L^2}\leq \frac{C}{t^{\beta/4}}
 {\| f\|}_{L^2(|x|^{\beta})},
$$
for some positive constant $C$ independent of $f$ and $t$.
\end{lemma}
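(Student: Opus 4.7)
The plan is to use a standard interpolation of the two trivial pointwise bounds on $|e^{i\theta}-1|$ and then integrate.

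First I would observe that for any real $\theta$, we have the two elementary bounds
\begin{equation*}
|e^{i\theta}-1|\leq 2\qquad\text{and}\qquad |e^{i\theta}-1|\leq |\theta|.
\end{equation*}
Interpolating (i.e.\ writing $|e^{i\theta}-1|=|e^{i\theta}-1|^{\alpha}|e^{i\theta}-1|^{1-\alpha}$ for some $\alpha\in[0,1]$) yields
\begin{equation*}
|e^{i\theta}-1|\leq 2^{1-\alpha}|\theta|^{\alpha},\qquad 0\leq \alpha\leq 1.
\end{equation*}
Applying this with $\theta=-x^{2}/(4t)$ and $\alpha=\beta/4$ (which lies in $[0,1]$ precisely because $0\leq\beta\leq 4$) gives the pointwise estimate
\begin{equation*}
\bigl|e^{-ix^{2}/(4t)}-1\bigr|\leq C\,\frac{|x|^{\beta/2}}{t^{\beta/4}},
\end{equation*}
for some absolute constant $C$.

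Next I would square this inequality, multiply by $|f(x)|^{2}$, and integrate in $x$. This gives
\begin{equation*}
\int_{\R}|f(x)|^{2}\bigl|e^{-ix^{2}/(4t)}-1\bigr|^{2}\,dx\leq \frac{C^{2}}{t^{\beta/2}}\int_{\R}|f(x)|^{2}|x|^{\beta}\,dx=\frac{C^{2}}{t^{\beta/2}}\,\|f\|_{L^{2}(|x|^{\beta})}^{2}.
\end{equation*}
Taking square roots produces the claimed bound with the factor $t^{-\beta/4}$.

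There is no real obstacle here; the only thing worth noting is that the assumption $0\leq\beta\leq 4$ is sharp for this interpolation argument (if $\beta>4$ the bound $|\theta|^{\beta/4}$ is no longer controlled by $\max(2,|\theta|)$ uniformly). The endpoints are instructive as a sanity check: when $\beta=0$ the statement reduces to $\|f(e^{-ix^{2}/(4t)}-1)\|_{L^{2}}\leq 2\|f\|_{L^{2}}$, and when $\beta=4$ it reduces to the bound using $|e^{i\theta}-1|\leq|\theta|$ directly.
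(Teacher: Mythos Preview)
Your proof is correct and is exactly the standard ``immediate'' argument the paper has in mind (the paper does not spell out a proof, simply stating that the lemma is immediate). The interpolation between $|e^{i\theta}-1|\leq 2$ and $|e^{i\theta}-1|\leq |\theta|$ with exponent $\alpha=\beta/4$ is precisely what is needed, and your remark on the sharpness of $0\leq\beta\leq 4$ is accurate.
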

\begin{lemma}
 \label{lemma2}
 Given $\delta\neq 0$, and $t>0$, define
 $$
   A_{t}(\xi)=\int_{t}^{\infty} e^{2i\tau\xi^2} \frac{d\tau}{\tau^{1+i\delta}}=
   \int_{t}^{\infty} e^{2i\tau\xi^2} e^{-i\delta\log\tau} \frac{d\tau}{\tau}
 $$
 for $\xi\neq 0$. Then, there exists a constant $C>0$ such that
\begin{equation}
\label{lemma2a}
 |A_{t}(\xi)|\leq \frac{C}{1+t\xi^2}, \qquad \forall\, \xi\neq 0.
\end{equation}
\end{lemma}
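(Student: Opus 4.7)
The plan is to reduce this two-parameter integral to a single-parameter one by the dilation $s=2\tau\xi^2$. Since $d\tau/\tau = ds/s$ and $\tau^{-i\delta} = (2\xi^2)^{i\delta} s^{-i\delta}$, with $|(2\xi^2)^{i\delta}|=1$, this gives
\begin{equation*}
 |A_t(\xi)| = |I(a)|, \qquad I(a) := \int_a^\infty e^{is}\, s^{-1-i\delta}\, ds, \qquad a := 2t\xi^2,
\end{equation*}
so the estimate (\ref{lemma2a}) becomes equivalent to $|I(a)| \leq C(\delta)/(1+a)$ for every $a>0$. I would then treat the regimes $a \geq 1$ and $a \leq 1$ separately by suitable integrations by parts.

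For $a \geq 1$ I would integrate by parts using $e^{is} = -i(e^{is})'$, which produces
\begin{equation*}
 I(a) = i\, e^{ia} a^{-1-i\delta} - i(1+i\delta) \int_a^\infty e^{is}\, s^{-2-i\delta}\, ds.
\end{equation*}
The boundary contribution at $+\infty$ vanishes because $|s^{-1-i\delta}| = s^{-1}\to 0$, and the remaining integral is now absolutely convergent with $\int_a^\infty s^{-2}\,ds = 1/a$. The triangle inequality then yields $|I(a)| \leq (1+\sqrt{1+\delta^2})/a$, the desired bound in this range.

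The delicate regime is $a \leq 1$, where a direct absolute-value estimate fails since $\int_a^1 s^{-1}\,ds = \log(1/a)$ diverges as $a \to 0$; one must genuinely exploit the oscillation of $s^{-i\delta}$, which is precisely where the hypothesis $\delta \neq 0$ becomes essential. My plan is to split $I(a) = \int_a^1 e^{is} s^{-1-i\delta}\,ds + I(1)$, control $I(1)$ by the previous step, and decompose $e^{is} = 1 + (e^{is}-1)$ in the remaining piece. The main part is explicit,
\begin{equation*}
 \int_a^1 s^{-1-i\delta}\, ds = \frac{1-a^{-i\delta}}{-i\delta},
\end{equation*}
of modulus at most $2/|\delta|$ uniformly in $a \leq 1$, whereas the remainder satisfies $|e^{is}-1| \leq s$ and is therefore absolutely integrable with bound $1$. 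Combined, these give a $\delta$-dependent constant bound $|I(a)| \leq C(\delta)$ on $[0,1]$.

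Merging the two regimes produces $|I(a)| \leq C(\delta)/(1+a)$, equivalently $|A_t(\xi)| \leq C/(1+t\xi^2)$, uniformly in $t>0$ and $\xi \neq 0$. The only conceptual obstacle is the small-$a$ case: without the oscillation supplied by $\delta \neq 0$, the logarithmic divergence of $\int_0^1 s^{-1}\,ds$ would block any uniform-in-$a$ bound, and the lemma would fail.
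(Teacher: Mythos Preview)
Your proof is correct and follows essentially the same strategy as the paper's: split at $t\xi^2\sim 1$, integrate by parts against $e^{is}$ in the large regime, and exploit the oscillation of $s^{-i\delta}$ (equivalently, $\tau^{-i\delta}$) in the small regime. Your preliminary scaling $s=2\tau\xi^2$ to reduce to a one-parameter integral $I(a)$ is a clean simplification, and your decomposition $e^{is}=1+(e^{is}-1)$ for $a\le 1$ is just a repackaging of the paper's integration by parts against $\frac{d}{d\tau}(e^{-i\delta\log\tau})$.
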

\begin{proof}
 For fixed $\xi\neq 0$ , and $t>0$ such that $\xi^2t\geq 1$, write
 $$
  A_{t}(\xi)= \frac{1}{2i\xi^2} \int_{t}^{\infty}
   \frac{d\ }{d\tau}(e^{2i\tau\xi^2})\, \frac{d\tau}{\tau^{1+i\delta}},
 $$
 and for $\xi$ and $t$ such that $\xi^2t\leq 1$, write
 \begin{eqnarray*}
 A_{t}(\xi)
 &=&
 \left(\int_{t}^{1/\xi^2} + \int_{1/\xi^2}^{\infty}    \right)
 e^{2i\tau\xi^2}\ e^{-i\delta\log \tau}, \frac{d\tau}{\tau}
   \\
 &=&
  \frac{i}{\delta}\int_{t}^{1/\xi^2} \frac{d\ }{d\tau} (e^{-i\delta\log\tau}) e^{2i\tau\xi^2}\, d\tau
  +\frac{1}{2i\xi^2} \int_{1/\xi^2}^{\infty}
  \frac{d\ }{d\tau}(e^{2i\tau\xi^2})\, \frac{d\tau}{\tau^{1+i\delta}}.
 \end{eqnarray*}
 Inequality (\ref{lemma2a}) now follows by integrating by parts in
 the above identities.
\end{proof}
\begin{lemma}[Pitt's inequality. See~\cite{Pitt}]
 \label{lemma3}
 For $f\in\mathcal{S}(\R^d)$, and $0\leq \beta<d$,
 $$
 \int_{\R^d} |\xi |^{-\beta} |\hat f(\xi)|^2\, d\xi \leq
 C_{\beta} \int_{\R^d} |x|^{\beta} |f(x)|^2\, dx,
 $$
 where
 $$
  C_{\beta}= \pi^{\beta}\left[
  \Gamma\left(  \frac{d-\alpha}{4} \right)\Big/ \Gamma \left( \frac{d+\alpha}{4}  \right)
  \right]^{2}
 $$
\end{lemma}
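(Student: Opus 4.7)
The plan is to reduce Pitt's inequality to an $L^2$-boundedness statement for the Riesz potential and prove it by Schur's test, with the sharp constant then identified separately via a conformal/spherical-harmonic analysis. Define the Riesz potential $I_s$ by $\widehat{I_s g}(\xi)=|\xi|^{-s}\hat g(\xi)$. Setting $s=\beta/2$ and $g=f$, Plancherel's identity gives
$$\int_{\R^d}|\xi|^{-\beta}|\hat f(\xi)|^2\,d\xi = \|I_{\beta/2}f\|_{L^2(\R^d)}^2,$$
so Pitt's inequality is equivalent to the weighted bound
$$\|I_{\beta/2}f\|_{L^2} \le C_\beta^{1/2}\bigl\||\cdot|^{\beta/2}f\bigr\|_{L^2},$$
which is the diagonal case $p=q=2$ of the Stein--Weiss fractional integration inequality.

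For the $L^2\to L^2$ estimate I would use the convolution representation $I_{\beta/2}f(x)=c_{d,\beta}\int|x-y|^{\beta/2-d}f(y)\,dy$ and pass to the bilinear form
$$T(G,H)=\iint K(x,y)\,G(x)\overline{H(y)}\,dx\,dy,\qquad K(x,y)=|x|^{-\beta/2}|x-y|^{\beta/2-d}|y|^{-\beta/2}.$$
Since $K$ is jointly homogeneous of degree $-d$, Schur's test applies with the power weight $w(x)=|x|^{-(d-\beta)/2}$: the quantities $\int K(x,y)w(y)\,dy$ and $\int K(x,y)w(x)\,dx$ reduce by scaling to a single Beta-type integral that is finite precisely when $0\le\beta<d$. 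This already yields Pitt's inequality with some finite constant, which is all the rest of the paper actually needs.

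To pin down the sharp constant $C_\beta=\pi^{\beta}\bigl[\Gamma((d-\beta)/4)/\Gamma((d+\beta)/4)\bigr]^{2}$ I would follow Beckner's approach: the weighted bilinear form is conformally invariant, so it can be transferred via stereographic projection to the sphere $S^d$, where expansion in spherical harmonics diagonalizes it. The extremum is attained on the lowest zonal mode, and evaluating the corresponding spectral multiplier reduces to a single Beta integral whose value is exactly the claimed Gamma-function ratio, squared.

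The main obstacle is the sharp constant, not the inequality itself: Schur's test supplies a finite bound cheaply, whereas the Gamma-ratio formula genuinely needs the conformal/spherical-harmonic machinery (or an equivalent extremizer calculation). For our NLS application, however, $C_\beta$ enters only through a smallness threshold on the data, so the Schur-based bound is sufficient, and Beckner's explicit constant is recorded purely as a quantitative refinement borrowed from the cited work.
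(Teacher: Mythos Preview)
The paper does not give a proof of this lemma at all: it is stated as a quotation from the literature (``Pitt's inequality. See~\cite{Pitt}'') and then used as a black box in the estimates (\ref{b17})--(\ref{b19}). So there is nothing to compare your argument against; you have supplied a proof where the authors simply cite one.

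Your outline is the standard one and is correct in spirit: the inequality is the diagonal $p=q=2$ case of Stein--Weiss, which follows from Schur's test applied to a kernel homogeneous of degree $-d$, and the sharp constant is Beckner's via conformal transplantation to the sphere. One small slip: the kernel you wrote down,
\[
K(x,y)=|x|^{-\beta/2}\,|x-y|^{\beta/2-d}\,|y|^{-\beta/2},
\]
is homogeneous of degree $-d-\beta/2$, not $-d$. Either drop the factor $|x|^{-\beta/2}$ (if you work with the asymmetric operator $g\mapsto \int |x-y|^{\beta/2-d}|y|^{-\beta/2}g(y)\,dy$), or replace the middle exponent by $\beta-d$ (if you first symmetrize via $\langle I_{\beta/2}f,I_{\beta/2}f\rangle=\langle I_{\beta}f,f\rangle$). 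With either correction the homogeneity is $-d$ and Schur's test with $w(x)=|x|^{-(d-\beta)/2}$ goes through exactly as you describe. As you note, only a finite constant is needed for the application in the paper, so the Beckner computation of $C_\beta$ is a refinement, not a requirement.
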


We will continue to prove Theorem~\ref{T1}.
%
%--------------------------------------------------------
\subsection{Proof of Theorem~\ref{T1}}
%--------------------------------------------------------
Let $0<\gamma<1$, and $u_{+}\in L^1\cap L^2(\langle x
\rangle^\gamma)$.

For  $t_0\geq 1$ and $\nu\in \R$. We denote  $Y^{\nu}_{t_0}$ the
space of functions $v(t,x)$ such that the norm
$$
 {\| z \|}_{Y^{\nu}_{t_0}}= \sup_{t\in [t_{0}, \infty)}
 \left(
 t^{\nu} {\| z(t)  \|}_{L^2} +
 t^{\nu} {\| z  \|}_{L^4 \left(  (t,\infty), L^{\infty} (\mathbb{R})
 \right)}
 \right)
$$
is finite. In order to prove Theorem~\ref{T1}, as we have already
mentioned, we shall do a fixed point for the operator $B$ defined in
(\ref{opB}) in the closed ball
$$
 \mathcal{B}_{R}=\{
 z\ : \ {\| z\|}_{Y^{\nu}_{t_0}}\leq R
 \}, \qquad R>0
$$
with $\nu>0$  and $R>0$ to be chosen later on.

For any given $z$ such that\footnote{In order to simplify notation,
in what follows we will write simply $Y$ to denote the space
$Y_{t_{0}}^{\nu}$.} $\|z \|_{Y}\leq R$,  we want to estimate
(see~(\ref{source}), (\ref{linear}), (\ref{NLT}) and \ref{opB}))
\begin{eqnarray}
 \label{b0}
 (Bz)(t)
 &=& \frac{i}{2}\int_{t}^{\infty} e^{i(t-\tau)\partial_x^2}\left\{
 2 (|v_f|^2-|f|^2_{\infty})z_{+}+ v_f^2 e^{-i\alpha \log t } \bar z_{+}
 \right\}\, \frac{d\tau}{\tau}
    \nonumber \\
 &-&
 \frac{i}{2}\int_{t}^{\infty} e^{i(t-\tau)\partial_{x}^{2}}
 \left\{
 2(|v_{f}|^2 -|f|^2_{\infty}) z
 +v_{f}^{2} e^{-i\alpha \log \tau} \bar z
 \right\}\frac{d\tau}{\tau}
    \nonumber \\
 &-&
 \frac{i}{2}\int_{t}^{\infty} e^{i(t-\tau)\partial_{x}^{2}} NLT(z+z_{+})\, \frac{d\tau}{\tau}
\end{eqnarray}
in $Y$.  Here, the non-linear term is given by (see
(\ref{NLT}))
\begin{equation}
 \label{NLTb}
 NLT(u)=
 2v_f e^{-i\frac{\alpha}{2}\log t} |u|^2 +
 \bar v_f e^{i\frac{\alpha}{2}\log t}u^2+|u|^2u.
\end{equation}
Recall that (see (\ref{vftilde1}) and (\ref{change2}))
\begin{eqnarray}
 \label{b1}
v_f(t,x)=\bar f\left( \frac{x}{\sqrt{t}} \right),
\qquad
z_{+}(t,x)= e^{it\partial_x^2} u_{+},
\qquad
\alpha = 2|f|^2-A,
\end{eqnarray}
and $u_{+}$ is a given function in $L^1\cap L^2(\langle x
\rangle^\gamma)$, with $0<\gamma<1$.

In what follows the constant $C$ may be different from an inequality
to another in a chain of inequalities.
%
%-------------Analysis of the source terms ----------------
%

\noindent We begin by estimating the $Y$-norm of the source term in
(\ref{b0}). To this end, observe that Strichartz estimates
(\ref{stri-inh}) with exponents $(4,\infty)$ and $(\infty, 2)$, and
the dispersion estimate (\ref{dis})  lead to
\begin{eqnarray}
 \label{b2}
 &&
 {\left\|
 \int_{t}^{\infty} e^{i(t-\tau)\partial_{x}^{2} }
 \left(
 (|v_{f}|^2- |f|^{2}_{\infty}) z_{+}
 \right)\,
 \frac{d\tau}{\tau}
 \right\|}_{Y}
    \nonumber  \\
 &&
 \hspace{1truecm}
 \leq
 C \sup_{t\geq t_{0}} t^{\nu}
 \int_{t}^{\infty}
 {\|  (|v_{f}|^2-|f|^2_{\infty}) z_{+} \|}_{L^2} \, \frac{d\tau}{\tau}
    \nonumber \\
 &&
 \hspace{1truecm}
 \leq
 C \sup_{t\geq t_{0}} t^{\nu}
 \int_{t}^{\infty}
 {\|  |v_{f}|^2-|f|^2_{\infty}\|}_{L^2}{\| z_{+} \|}_{L^\infty} \,
 \frac{d\tau}{\tau}
    \nonumber \\
 &&
 \hspace{1truecm}
 \leq
 C {\|u_{+}\|}_{L^1}
 {\| \, |f(\cdot)|^2-|f|^{2}_{\infty}  \|}_{L^2}
 \sup_{t\geq t_{0}} t^{\nu} \int_{t}^{\infty}
 \, \frac{d\tau}{\tau^{1+\frac{1}{4}}}
    \nonumber  \\
 &&
 \hspace{1truecm}
 =
 C {\|u_{+}\|}_{L^1}
 {\| \, |f(\cdot)|^2-|f|^{2}_{\infty}  \|}_{L^2}
 \, \frac{1}{{t_0}^{\frac{1}{4}-\nu}},
\end{eqnarray}
for all $\nu$ such that $\nu\leq1/4$.
\medskip

%
%------------------ Second source term: The difficult one --------------------
%
In order to control the second source term in (\ref{b0}), we use the
fact that $f$ is a solution of (\ref{eqf1}) satisfying
$|f|_{+\infty}=|f|_{-\infty}$ (and as a consequence
$|f'|_{+\infty}=|f'|_{-\infty}$, see part {\it{i)}} in
Proposition~\ref{f}). Then, from the asymptotics of $f$ in
Proposition~\ref{f}, it follows that
\begin{eqnarray}\label{b3}
 (\bar f)^2(x)
 &=&
 |f|^{2}_{\infty} e^{-2ic_{\pm}} e^{-2i(|f|^2_{\infty}-A)\log|x|}
    \nonumber  \\
 &-&
 4i |f|_{\infty} |f'|_{\infty} e^{-i(c_{\pm}+ d_{\pm})} \frac{e^{i \left(  \frac{x^2}{4}+|f|^2_{\infty}\log |x|  \right)}}{x}+
 O\left(  \frac{1}{|x|^2}  \right),
\end{eqnarray}
as $x\rightarrow \pm \infty$, so that
\begin{equation}
 \label{b4}
 v_{f}^{2}(t,x)=(\bar f)^2\left( \frac{x}{\sqrt{t}} \right)=
 v^2_{f,\infty}(t,x) +O\left( \frac{\sqrt{t}}{|x|}\right),
\end{equation}
for $|x|\gg \sqrt{t}$, where we denote by $v^2_{f,\infty}(t,x)$ the
function defined for positive time by
\begin{equation}\label{b5}
 v^2_{f,\infty}(t,x)=
 |f|^2_{\infty}   e^{-2i\delta\log\left| \frac{x}{\sqrt{t}}  \right|}\, m(x), \qquad
 m(x)=e^{-2ic_{+}}\chi_{[0,\infty)}(x)+ e^{-2ic_{-}}\chi_{(-\infty,0)}(x)
 %= |f|^2_{\infty} e^{-2ic_{+}} e^{i\delta\log 2} t^{i\frac{\delta}{2}} e^{i\delta \log\left| -\frac{x}{2t}  \right|},
\end{equation}
with $c_{\pm}\in [0,2\pi)$, and $\delta=|f|^{2}_{\infty}-A$.

Next notice that, from the representation of the solution of the
free Schr\"odinger equation, $e^{-i\tau\partial_x^2}u_{0}$, as a
convolution, it is easy to see that
\begin{eqnarray}
 \label{b6}
  \left( e^{-i\tau\partial_x^2}u_0  \right)(x)
  &=&
  \overline{\frac{1}{\sqrt{4\pi i\tau}}}\int u_0(y) e^{-i\frac{(x-y)^2}{4\tau}}\, dy
  \nonumber  \\
  &=&
  \frac{c_1}{\sqrt{\tau}} e^{-i\frac{x^2}{4\tau}} (\widehat{u_0})\left(- \frac{x}{2\tau}  \right)
  \nonumber  \\
  &+&
  \frac{c_1}{\sqrt{\tau}} e^{-i\frac{x^2}{4\tau}}
   \left( u_0(\cdot) (e^{-\frac{(\cdot)^2}{4\tau}}-1)  \right)^{\widehat{\ }} \left(-\frac{x}{2\tau}
   \right),
\end{eqnarray}
with $c_1=\sqrt{\pi i}$.

Define the auxiliary function $\omega_{+}$ by
\begin{equation}\label{b7}
\omega_{+}= e^{i\alpha\log t} u_{+}.
\end{equation}
Then, from (\ref{b1}), (\ref{b7}), (\ref{b4}) and (\ref{b6}), it is
straightforward to see that the second linear term in (\ref{b0}) can
be  rewritten as
\begin{eqnarray} \label{b8}
 &&
 \int_{t}^{\infty} e^{i(t-\tau)\partial_{x}^{2}} ( v^2_{f} e^{-i\alpha \log t} \bar z_{+} )\, \frac{d\tau}{\tau}
 =
 \int_{t}^{\infty} e^{i(t-\tau)\partial_{x}^{2}} ( v^2_{f} e^{-i\tau \partial_{x}^{2}} \overline{\omega}_{+} )\, \frac{d\tau}{\tau}
 \nonumber \\
 &&
 %{\hspace{0.5truecm}}
 =
 \int_{t}^{\infty} e^{i(t-\tau)\partial_{x}^{2}}
 \left[ (v_{f}^2-v^{2}_{f,\infty})
 %|f|^{2}_{\infty} e^{-2ic_{+}} e^{i\delta\log\left|  \frac{x}{\sqrt{\tau}} \right|})\,
 e^{-i\tau \partial_{x}^{2}} \overline{\omega}_{+} \right]
 \, \frac{d\tau}{\tau}
    \nonumber \\
 &&
 %{\hspace{0.5truecm}}
 \qquad +
 \int_{t}^{\infty} e^{i(t-\tau)\partial_{x}^{2}}
 (
 v^2_{f,\infty}e^{-i\tau \partial_{x}^{2}} \overline{\omega}_{+}
 )\, \frac{d\tau}{\tau}
  \nonumber \\
 &&
 %{\hspace{0.5truecm}}
 =
 \int_{t}^{\infty} e^{i(t-\tau)\partial_{x}^{2}}
 \left[ (v_{f}^2-v^{2}_{f,\infty})
 %|f|^{2}_{\infty} e^{-2ic_{+}} e^{i\delta\log\left|  \frac{x}{\sqrt{\tau}} \right|})\,
 e^{-i\tau \partial_{x}^{2}} \overline{\omega}_{+} \right]
 \, \frac{d\tau}{\tau}
    \nonumber \\
 &&
 %{\hspace{0.5truecm}}
 \qquad +
 c_1 \int_{t}^{\infty} e^{i(t-\tau)\partial_{x}^{2}}
 \left[
  v^2_{f,\infty} \frac{e^{-i\frac{x^2}{4\tau}}}{\sqrt{\tau}}
  \left( \overline{\omega}_{+}(\cdot) (e^{-i\frac{(\cdot)^2}{4\tau}}-1) \right)^{\widehat{\ }} \left( -\frac{x}{2\tau} \right)
 \right] \, \frac{d\tau}{\tau}
     \nonumber  \\
 &&
 %{\hspace{0.5truecm}}
 \qquad +
 c_1 \int_{t}^{\infty} e^{i(t-\tau)\partial_{x}^{2}}
 \left[
 v^2_{f,\infty}
 \frac{e^{-i\frac{x^2}{4\tau}}}{\sqrt{\tau}}(\widehat{\overline{\omega}_{+}}) \left( -\frac{x}{2\tau} \right)  %\widehat{\bar{\omega_{+}}}
 \right] \, \frac{d\tau}{\tau}.
 %    \nonumber \\
 %&&
 %{\hspace{0.5truecm}}
 %=
 %I_{1}+I_{2}+I_{3}.
\end{eqnarray}
To control the first term on the r.h.s. in (\ref{b8}), we first
observe that
$$
{\|
  v_{f}^2(\tau, \cdot)-v^2_{f,\infty}(\tau,\cdot)\,
\|}_{L^2(\mathbb{R})}=
\tau^{\frac{1}{4}}
{\| (\bar f)^2(\cdot) -  (\bar f)^2_{\infty}(\cdot)  \|}_{L^2(\mathbb{R})},
$$
where
%\begin{equation} \label{}
 $$
 (\bar f)^2_{\infty}(x)=|f|^2_{\infty}  e^{-2i\delta \log|x|} m(x),
\qquad \delta=|f|^2_{\infty}-A,
 $$
%\end{equation}
and ${\| (\bar f)^2(\cdot) -(\bar f)^2_{\infty}(\cdot)
 \|}_{L^2}<\infty$ (recall that $f$ is a bounded function and the
asymptotics of $(\bar f)^2(x)$ given in (\ref{b3})). Then, arguing
similarly to the control of the first linear term (see~(\ref{b2})),
we obtain that
\begin{eqnarray}\label{b9}
 &&
 \left\|
 \int_{t}^{\infty} e^{i(t-\tau)\partial_{x}^{2}}
 \left(
 ( v_{f}^2- v_{f,\infty}^2
 %|f|^{2}_{\infty} e^{-2ic_{+}} e^{i\delta\log\left|  \frac{x}{\sqrt{\tau}} \right|}\,
 )e^{-i\tau \partial_{x}^{2}} \overline{\omega}_{+}
 \right)
 \, \frac{d\tau}{\tau}
 \right\|_{Y}
    \nonumber   \\
 &&
 \hspace{0.5truecm}
 \leq
 C\, {\|u_{+}\|}_{L^1}  {\| (\bar f)^2(\cdot)- (\bar f)^2_{\infty}(\cdot)   \|}_{L^2}
 \frac{1}{{t_0}^{\frac{1}{4}-\nu}},
\end{eqnarray}
for $\nu\leq 1/4$.

The second term in the r.h.s. in (\ref{b8}) is an error term.
Strichartz estimates with exponents $(4,\infty)$ and $(\infty, 2)$,
(\ref{b5}), Plancherel's identity, and lemma~\ref{lemma1} lead to
\begin{eqnarray}
 \label{b10}
 &&
 {\left\|
 \int_{t}^{\infty} e^{i(t-\tau)\partial^2_{x}}\left[
 v^2_{f,\infty}(\cdot,\tau) \frac{e^{-i\frac{x^2}{4\tau}}}{\sqrt{\tau}}
 (\overline{\omega}_{+} (\cdot)(e^{-i\frac{(\cdot)^2}{4\tau}}-1))^{\widehat{\ }}
 \left(-\frac{x}{2\tau}  \right)
 \right]\, \frac{d\tau}{\tau}
 \right\|}_{Y}
   \nonumber   \\
 &&
 \hspace{0.5truecm}
 \leq
 C\,
 \sup_{t\geq t_0} t^{\nu} \int_{t}^{\infty}
 {\left\|
 v^2_{f,\infty}(x,\tau)  \frac{e^{-i\frac{x^2}{4\tau}}}{\sqrt{\tau}}
 \left(\overline{\omega}_{+}(\cdot) (e^{-i\frac{(\cdot)^2}{4\tau}}-1) \right)^{\widehat{\ }} \left( -\frac{x}{2\tau} \right)
 \right\|}_{L^2}
 \, \frac{d\tau}{\tau}
    \nonumber  \\
 &&
 \hspace{0.5truecm}
 =
 C\, |f|^2_{\infty}
 \sup_{t\geq t_0} t^{\nu} \int_{t}^{\infty}
 {\left\|
 \left(
 \overline{\omega}_{+}(\cdot) (e^{-i\frac{(\cdot)^2}{4\tau}}-1)
 \right)^{\widehat{\ }} \left( -\frac{x}{2\tau} \right)
 \right\|}_{L^2}
 \, \frac{d\tau}{{\tau}^{\frac{3}{2}}}
    \nonumber  \\
 &&
 \hspace{0.5truecm}
 =
 C\, |f|^2_{\infty}
 \sup_{t\geq t_0} t^{\nu} \int_{t}^{\infty}
 {\left\|
 \overline{\omega}_{+}\,  (e^{-i\frac{(\cdot)^2}{4\tau}}-1)
 \right\|}_{L^2}
 \, \frac{d\tau}{\tau}
    \nonumber  \\
 &&
 \hspace{0.5truecm}
 \leq
 C\, {\|f\|}^{2}_{L^{\infty}}\, {\|u_{+}\|}_{L^2(|x|^{\gamma})}
 \, \frac{1}{{t_0}^{\frac{\gamma}{4}-\nu}},
\end{eqnarray}
for $0<\gamma\leq 4$ and $\nu\leq \gamma/4$, (recall that
$\omega_{+}=e^{i\alpha\log t}u_{+}$, see (\ref{b7})).

In order to control the third term on the r.h.s. in (\ref{b8}),
recall the definition of $v_{f,\infty}^{2}(t,x)$ in (\ref{b5}),
$$
  v^2_{f,\infty}(t,x)=
 |f|^2_{\infty}  e^{-2i\delta\log\left| \frac{x}{\sqrt{t}}  \right|} m(x),\qquad
 m(x)= e^{-2ic_{+}}\chi_{[0,\infty)}(x)+e^{-2ic_{-}}\chi_{(-\infty,0)}(x)
$$
or, equivalently,
$$
v_{f,\infty}^{2}(t,x)
 = |f|^2_{\infty}  e^{-2i\delta\log 2} t^{-i\delta} e^{-2i\delta \log\left| \frac{x}{2t}  \right|} m\left(  \frac{x}{2t} \right), \qquad t>0.
$$
Then, using once again the expression for the free Shr\"odinger
solution in (\ref{b6}), the latter term rewrites
\begin{eqnarray}
 \label{b11}
 &&
 c_1 \int_{t}^{\infty} e^{i(t-\tau)\partial^2_{x}}\left(
 v^{2}_{f,\infty} \frac{e^{-i\frac{x^2}{4\tau}}}{\sqrt{\tau}}
 (\widehat{\overline{\omega}_{+}})\left( -\frac{x}{2\tau} \right)
 \right)\, \frac{d\tau}{\tau}
    \nonumber \\
 &&\hspace{0.2truecm}
 =|f|^2_{\infty} e^{-2i\delta\log 2}
 \int_{t}^{\infty} e^{i(t-\tau)\partial_x^2}\left(
 c_1 \frac{e^{-i\frac{x^2}{4\tau}}}{\sqrt{\tau}} e^{-2i\delta \log\left|\frac{x}{2\tau}  \right|}
 m\left( \frac{x}{2\tau} \right)(\widehat{\overline{\omega}_{+}})\left( -\frac{x}{2\tau} \right)
 \right) \, \frac{d\tau}{\tau^{1+i\delta}}
   \nonumber  \\
 &&\hspace{0.2truecm}
 =|f|^2_{\infty} e^{-2i\delta\log 2}
 \int_{t}^{\infty} e^{i(t-\tau)\partial_x^2}\left(
 c_1 \frac{e^{-i\frac{x^2}{4\tau}}}{\sqrt{\tau}}
  (\widehat{\overline{T_{\delta} \omega_{+} }})
  \left( -\frac{x}{2\tau} \right)
 \right) \, \frac{d\tau}{\tau^{1+i\delta}}
   \nonumber  \\
 &&\hspace{0.2truecm}
 =|f|^2_{\infty} e^{-2i\delta\log 2}
 \int_{t}^{\infty} e^{i(t-2\tau)\partial_x^2}\left(
 \overline{T_{\delta} \omega_{+}} \right)
 \, \frac{d\tau}{\tau^{1+i\delta}}
   \nonumber  \\
 &&\hspace{0.2truecm}
 \quad
 -c_1 |f|^2_{\infty}  e^{-2i\delta\log 2}
 \int_{t}^{\infty} e^{i(t-\tau)\partial_x^2}
   \nonumber \\
 &&\hspace{3.0truecm}
 \left(
 e^{-i\frac{x^2}{4\tau}}
 \left(
 ( \overline{T_{\delta}\omega_{+}} )(\cdot ) (e^{-\frac{(\cdot)^2}{4\tau}}-1)
 \right)^{\widehat{\ }}
 \left( -\frac{x}{2\tau}
 \right)
 \right)\,
 \frac{d\tau}{\tau^{\frac{3}{2}+i\delta}}
   \nonumber  \\
 &&\hspace{0.2truecm}
 = I_1+I_2.
\end{eqnarray}
Here $T_{\delta}$ is the operator defined (in the Fourier
transform side) by
\begin{equation}
 \label{b12}
 \widehat{T_{\delta}u}(\xi)= e^{2i\delta \log|\xi|} \bar m(\xi) \hat u (\xi).
\end{equation}
$I_2$ is an ``{\it{error}}" term. The same argument as the one given
in obtaining (\ref{b10}) (that is using Strichartz estimates with
exponents $(4,\infty)$ and $(\infty, 2)$, Plancherel's identity, and
Lemma~\ref{lemma1}) leads to the following chain of inequalities
\begin{eqnarray}
 \label{b13}
 {\|I_2\|}_{Y}
 &\leq&
 C |f|^2_{\infty}\sup_{t\geq t_0} t^{\nu} \int_{t}^{\infty}
 {\| (   \overline{T_{\delta} \omega_{+}}  (\cdot )(e^{-\frac{(\cdot)^2}{4\tau}}-1))^{\widehat{\ }}
 \left(  -\frac{x}{2\tau} \right)
 \|}_{L^2}
 \, \frac{d\tau}{\tau^{1+\frac{1}{2}}}
    \nonumber  \\
 &\leq&
  C |f|^2_{\infty}\sup_{t\geq t_0} t^{\nu} \int_{t}^{\infty}
 {\| (\overline{T_{\delta}\omega_{+}}  )(e^{-\frac{(\cdot)^2}{4\tau}}-1)
 \|}_{L^2}
 \, \frac{d\tau}{\tau}
    \nonumber  \\
 &\leq&
 C |f|^2_{\infty}\sup_{t\geq t_0} t^{\nu} \int_{t}^{\infty}
 {\|T_{\delta}\omega_{+} \|}_{L^2(|x|^{\gamma})}
 \, \frac{d\tau}{{\tau}^{1+\frac{\gamma}{4}}},
\end{eqnarray}
for any $0\leq\gamma\leq 4$.

Now, since $T_{\delta}=T_1\circ T_2$, where $T_1$ and $T_2$ are
defined by
\begin{eqnarray*}
 &&
 \widehat{T_1 f}(\xi)= e^{2i\delta\log|x|}\, \hat f (\xi)
 \qquad {\hbox{and}}\qquad\\
 &&
 \widehat{T_2 f}(\xi)=\bar m(\xi) \hat f(\xi)=
 \left(
 \frac{e^{2ic_{+}}}{2}(1+\sgn(\xi)) +
 \frac{e^{2ic_{-}}}{2}(1-\sgn(\xi))
 \right)
 \hat f(\xi)
\end{eqnarray*}
with $T_1$ and $T_2$ Calder\'on-Zygmund operators (see
\cite[pp.~97-98]{Duo} and notice that $T_2$ is just a linear
combination of the identity operator and the Hilbert transform), and
$|x|^{\gamma}$ is an $A_2$-weight in the one-dimensional case, in
particular, for any $0\leq \gamma<1$, from the known $L^2$-weighted
inequalities  for Calder\'on-Zygmund operators (see \cite[pp.~
144]{Duo}, or \cite[pp.~204-205]{Stein}), we have that
\begin{equation}
 \label{b14}
 {\| T_{\delta} u \|}_{L^2(|x|^{\gamma})}\leq C {\| u \|}_{L^2(|x|^{\gamma})},
\end{equation}
for any $0\leq \gamma <1$.

From the inequalities (\ref{b13}) and (\ref{b14}), we conclude that
\begin{equation}
 \label{b15}
 {\|  I_2\|}_{Y}\leq \frac{C}{t_{0}^{\frac{\gamma}{4}-\nu}}
 {\| f \|}_{L^{\infty}}^{2}
 {\|  \omega \|}_{L^{2}(|x|^{\gamma})},
\end{equation}
for any $0<\gamma<1$, and $\nu\leq \gamma/4$.

Only $I_1$ remains to be estimated. First, recall
(\ref{sh-solution}), and the definition of $T_{\delta}$ in
(\ref{b12}). Then, $I_{1}$ in (\ref{b11}) rewrites equivalently as
\begin{eqnarray}
 \label{b16}
 I_{1}
 &=&
 |f|^{2}_{\infty}  e^{-2i\delta \log 2} \int_{t}^{\infty}
 e^{i(t-2\tau)\partial_{x}^{2}} (\overline{T_{\delta} \omega_{+}})
 \, \frac{d\tau}{\tau^{1+i\delta}}
  \nonumber  \\
 &=&
 |f|^{2}_{\infty} e^{-2i\delta \log 2} \int_{t}^{\infty}
 \left(
 \int_{\R} e^{ix\xi}e^{-i(t-2\tau)\xi^{2}} \widehat{\overline{T_{\delta}\omega_{+}}}(\xi)\, d\xi
 \right)
 \, \frac{d\tau}{\tau^{1+i\delta}}
  \nonumber  \\
 &=&
 |f|^{2}_{\infty} e^{-2i\delta \log 2}
 \int_{\R}
 e^{ix\xi} e^{-it\xi^2} e^{-2i\delta \log|\xi|} m(-\xi) \widehat{\overline{\omega}_{+}}(\xi)\, A_{t}(\xi)\, d\xi,
\end{eqnarray}
where
$$
 A_{t}(\xi)=\int_{t}^{\infty} e^{2i\tau\xi^2} \, \frac{d\tau}{\tau^{1+i\delta}}.
$$
On the one hand, from Plancherel's identity and Lemmma~\ref{lemma2},
it is easy to see that
\begin{eqnarray*}
 {\| I_{1}\|}_{L^2}
 &=&
 |f|^{2}_{\infty} {\| m(-\cdot) \widehat{\overline{\omega}_{+}}(\cdot)\, A_{t}(\cdot)\|}_{L^2}
    \\
 &\leq&
 2 |f|^{2}_{\infty}
 \left(
  {\| \widehat{\overline{\omega}_{+}}\, A_{t}\|}_{L^2(t\xi^2\leq 1)}+
 {\| \widehat{\overline{\omega}_{+}}\, A_{t}\|}_{L^2(t\xi^2> 1)}
 \right)
    \\
 &\leq&
 C\, \frac{|f|^{2}_{\infty}}{t^{\gamma/4}}
 \left(
 \int \frac{|\widehat{\overline{\omega}_{+}}(\xi)|^2}{|\xi|^{\gamma}}\, d\xi
 \right)^{1/2}
   \\
 &=&
 C\, \frac{\|f\|^{2}_{L^\infty}}{t^{\gamma/4}}
 {\| \omega_{+} \|}_{L^2(|x|^{\gamma})},
\end{eqnarray*}
for any $0\leq \gamma<1$. Here, we have used Pitt's inequality (see
Lemma~\ref{lemma3}) to obtain the last inequality. Thus,
\begin{equation}
 \label{b17}
 \sup_{t\geq t_0} t^{\nu} {\| I_{1}\|}_{L^2}
 \leq
 \frac{C}{t_{0}^{\frac{\gamma}{4}-\nu}} {\| f\|}^{2}_{L^{\infty}} {\| \omega_{+}\|}_{L^2(|x|^{\gamma})},
\end{equation}
for any $0\leq \gamma<1$ and $\nu\leq \gamma/4$.
\smallskip

\noindent In order to estimate the $L^4((t,\infty),
L^{\infty}(\R))$-norm of $I_{1}$, consider $\theta$ a cut-off
function with $\theta(x)=0$ if $|x|\leq 1/2$, and $\theta(x)=1$, if
$|x|>1$. We decompose $I_1$ in (\ref{b16}) as follows
\begin{eqnarray}
 \label{b18}
 I_{1}
 &=&
 |f|^{2}_{\infty}  e^{-2i\delta \log 2}
 \left(
 \int (1-\theta)(t\xi^2)+\int \theta(t\xi^2)
 \right)\,
    \nonumber \\
 &\ &
 \left(
  e^{ix\xi} e^{-it\xi^2} e^{-2i\delta \log|\xi|} m(-\xi)\widehat{\overline{\omega}_{+}}(\xi)\, A_{t}(\xi)
 \right)\, d\xi,
   \nonumber\\
 &=&
 I_{1,1}+I_{1,2}.
\end{eqnarray}
 Using Lemma~\ref{lemma2}, and Cauchy-Schwarz inequality, we find
 that
\begin{eqnarray*}
  |I_{1,2}|
  &\leq&
  2 |f|^{2}_{\infty} \int_{t\xi^2\geq 1/2}
  |\widehat{\overline{\omega}_{+}}(\xi)|\, |A_{t}(\xi)|\, d\xi
  \leq C\, \frac{|f|^{2}_{\infty}}{t} \int_{t\xi^2\geq 1/2} |\widehat{\overline{\omega}_{+}}(\xi)|\, \frac{d\xi}{\xi^2}
    \\
  &=&
  C\, \frac{|f|^{2}_{\infty}}{t} \int_{t\xi^2\geq 1/2} \frac{|\widehat{\overline{\omega}_{+}}(\xi)|}{|\xi|^\frac{\gamma}{2}}\,
  \frac{|\xi|^\frac{\gamma}{2}}{\xi^2}\, d\xi
      \\
  &\leq&
  C\, \frac{|f|^2_{\infty}}{t^{\frac{\gamma}{4}+\frac{1}{4}}}\left(
  \int
  \frac{|\widehat{\overline{\omega}_{+}}(\xi)|^2}{|\xi|^\gamma}\, d\xi
  \right)^{1/2}
\end{eqnarray*}
and
\begin{eqnarray*}
 |I_{1,1}|
 &\leq&
 |f|^{2}_{\infty}
 \int_{t\xi^2\leq 1} |\widehat{\overline{\omega}_{+}}(\xi)|\, |A_{t}(\xi)|\, d\xi
 \leq \frac{C}{t^{\frac{\gamma}{4}}} |f|^{2}_{\infty} \int_{t\xi^2\leq 1} \frac{|\widehat{\overline{\omega}_{+}}(\xi)|}{|\xi|^{\frac{\gamma}{2}}}\, d\xi
   \\
 &\leq&
 \frac{C}{t^{\frac{\gamma}{4}+\frac{1}{4}}} |f|^{2}_{\infty}
 \left(
  \int\frac{| \widehat{\overline{\omega}_{+}}(\xi)|^2}{|\xi|^{\gamma}}\, d\xi\right)^{1/2},
\end{eqnarray*}
for any $0\leq \gamma<3$. Plugging the above inequalities into
(\ref{b18}) and using Pitt's inequality (see~Lemma~\ref{lemma3})
give
$$
 |I_{1}|
 \leq
 C \frac{|f|^{2}_{\infty}}{t^{\frac{\gamma}{4}+\frac{1}{4}}}
 \left(\int\frac{| \widehat{\overline{\omega}_{+}}(\xi)|^2}{|\xi|^{\gamma}}\, d\xi\right)^{1/2}
 \leq
  C \frac{|f|^{2}_{\infty}}{t^{\frac{\gamma}{4}+\frac{1}{4}}} {\|  \omega_{+} \|}_{L^2(|x|^\gamma)},
$$
for $0\leq \gamma <1$. Therefore,
\begin{equation}
 \label{b19}
 \sup_{t\geq t_0} t^{\nu} {\| I_{1}\|}_{L^4((t,\infty), L^{\infty})}
 \leq
 C \frac{|f|^{2}_{\infty}}{t_{0}^{\frac{\gamma}{4}-\nu}}
 \, {\| \omega_{+}\|}_{L^2(|x|^{\gamma})},
\end{equation}
for $0<\gamma<1$, $\nu\leq \gamma/4$.
\medskip

\noindent From (\ref{b17}) and (\ref{b19}), we get that
\begin{equation}
 \label{b20}
 {\| I_{1}\|}_{Y}
 \leq
 C \frac{{\|f\|}^{2}_{L^{\infty}}}{t_{0}^{\frac{\gamma}{4}-\nu}}
 \, {\| \omega_{+}\|}_{L^2(|x|^{\gamma})},
\end{equation}
for $0<\gamma<1$, and $\nu\leq \gamma/4$. Thus, from (\ref{b11}),
(\ref{b15}), and (\ref{b20}), we conclude the following control for
the last term on the r.h.s. in (\ref{b8})
\begin{equation}
 \label{b21}
 \left\|
 \int_{t}^{\infty} e^{i(t-\tau)\partial_{x}^{2}} \left(
 v_{f,\infty}^{2}
 \frac{e^{-i \frac{x^2}{4\tau}}}{\sqrt{\tau}}
 (\widehat{\overline{\omega}_{+}})\left(  -\frac{x}{2\tau} \right)
 \right)\,  \frac{d\tau}{\tau}
 \right\|_{Y}
 \leq
 \frac{C}{t_{0}^{\frac{\gamma}{4}-\nu}} {\| f\|}_{L^{\infty}}^{2}
 \,  {\| u_{+}\|}_{L^2(|x|^{\gamma})},
\end{equation}
for any $\nu\leq \frac{\gamma}{4}$, and $0< \gamma <1$. Recall that
$\omega_{+}= e^{i\alpha \log t}u_{+}$ (see~(\ref{b7})).

Finally, the identity (\ref{b8}), and the inequalities (\ref{b9}),
(\ref{b10}) and (\ref{b21}) give the following control of the second
source term in (\ref{b0})
\begin{eqnarray}
 \label{b22}
 &&
 {\left\|
 \int_{t}^{\infty} e^{i(t-\tau)\partial_{x}^{2}} (v_{f}^{2} e^{-i\alpha \log t}\bar z_{+})\, \frac{d\tau}{\tau}
 \right\|}_{Y}
 \leq
 \frac{C}{t_{0}^{\frac{1}{4}-\nu}} {\| u_{+}\|}_{L^1} {\| (\bar f)^2(\cdot) -(\bar f)^2_{\infty}(\cdot)  \|}_{L^2}
    \nonumber \\
 &&\hspace{5truecm}\quad
 + \frac{C}{t_{0}^{\frac{\gamma}{4}-\nu}} {\| f\|}^{2}_{L^{\infty}} \,
 {\| u_{+}\|}_{L^2(|x|^{\gamma})},
\end{eqnarray}
for any $0< \gamma<1$, and $\nu\leq \gamma/4$.

We continue to analize the non-source terms in (\ref{b0}). To this
end, notice that for any $z\in Y$ the following inequalities hold
true
\begin {equation}
 \label{b23}
 {\| z \|}_{L^2}\leq \frac{{\| z\|}_Y}{t^{\nu}} \qquad {\hbox{and}}\qquad
 {\| z\|}_{L^4 ((t,\infty), L^{\infty}(\R))}\leq \frac{{\| z \|}_{Y}}{t^{\nu}}
 \qquad \forall\, t\geq t_0.
\end{equation}
Also, recall that $z_{+}(t,x)=e^{it\partial_x^2} u_{+}$ (see
(\ref{b1})), so that from the well-known inequalities for the
solution of the free Schr\"odinger equation in (\ref{dis}) and
(\ref{L2}), we have that
\begin{equation}
 \label{b24}
 {\| z_{+} \|}_{L^{\infty}}= {\| e^{it\partial_{x}^2} u_{+} \|}_{L^\infty}\leq C\, \frac{{\| u_{+}\|}_{L^1}}{\sqrt{t}}
 \quad {\hbox{and}}\quad
 {\| z_{+} \|}_{L^{2}}= {\| e^{it\partial_{x}^2} u_{+} \|}_{L^2}={\| u_{+}\|}_{L^2}.
\end{equation}
First,  using (\ref{stri-inh}) with exponents $(4,\infty)$ and
$(\infty,2)$, (\ref{b23}) and the fact that $v_f(t,x)$ is a bounded
function (notice that $v_f(t,x)=\bar f(x/\sqrt{t})$, and $f$ is
bounded by Proposition~\ref{f}), we obtain the following control for
the second integral term on the r.h.s in (\ref{b0})
\begin{eqnarray}
 \label{b25}
&&
{\left\|
 \int_{t}^{\infty} e^{i(t-\tau)\partial_x^2}\{
 2(|v_f|^2-|f|_{\infty}^{2}) z + v_f^2 e^{-i\alpha \log \tau} \bar z
 \}\, \frac{d\tau}{\tau}
\right\| }_{Y}\leq
   \nonumber \\
&&
\hspace{4mm}
C\, \sup_{t\geq t_0} t^{\nu} \int_{t}^{\infty}
{\| 2(|v_f|^2-|f|^2_{\infty})z +v_f^2 e^{-i\alpha \log \tau} \bar z   \|}_{L^2}\, \frac{d\tau}{\tau}\leq
   \nonumber \\
&&
\hspace{4mm}
C\, {\| f\|}_{L^\infty}^{2} \sup_{t\geq t_0} t^{\nu} \int_{t}^{\infty} {\| z\|}_{L^2}\, \frac{d\tau}{\tau} \leq
   \nonumber \\
&&
\hspace{4mm}
C\, {\| f\|}_{L^\infty}^{2} {\| z\|}_{Y} \sup_{t\geq t_0} t^{\nu} \int_{t}^{\infty} \, \frac{d\tau}{\tau^{1+\nu}}
=
C\, {\| f\|}_{L^\infty}^{2} {\| z\|}_{Y},
\end{eqnarray}
for any $\nu\geq 0$.

Only the Duhamel term  in (\ref{b0}) related to the non-linear terms
$NLT(z+z_{+})$, where
$$
 NLT(z+z_{+})=
 2v_f e^{-i\frac{\alpha}{2}\log t} |z+z_{+}|^2 +
 \bar v_f e^{i\frac{\alpha}{2}\log t}(z+z_{+})^2+|z+z_{+}|^2(z+z_{+}).
$$
(see (\ref{NLTb})) remains to be estimated.

To control the terms associated to quadratic powers of  $z+z_{+}$,
we use as before the inequalities  (\ref{stri-inh}) with exponents
$(4,\infty)$ and $(\infty,2)$, the fact that ${\| v_f
\|}_{L^\infty}={\| f \|}_{L^\infty}<\infty$, and estimates
(\ref{b23}) and (\ref{b24}) to obtain that
\begin{eqnarray}
 \label{b26}
 &&
 {\left\|  \int_{t}^{\infty}e^{i(t-\tau)\partial_x^2}\{ 2v_fe^{-i\frac{\alpha}{2}\log \tau} |z+z_{+}|^2
 + \bar v_f e^{i\frac{\alpha}{2}} (z+z_{+})^2    \}\, \frac{d\tau}{\tau}   \right\|}_{Y}
    \nonumber \\
 &&
 \hspace{4mm}
 \leq C\, \sup_{t\geq t_0} t^{\nu}\int_{t}^{\infty}
 {\|
 2v_fe^{-i\frac{\alpha}{2}\log \tau} |z+z_{+}|^2
 + \bar v_f e^{i\frac{\alpha}{2}} (z+z_{+})^2
 \|}_{L^2}\, \frac{d\tau}{\tau}
       \nonumber \\
 &&
 \hspace{4mm}
 \leq C\,  {\| f\|}_{L^\infty} \sup_{t\geq t_0} t^{\nu}
 \int_{t}^{\infty} \{
 {\| z \|}_{L^2}({\| z \|}_{L^\infty}+ {\| z_{+} \|}_{L^\infty})
 +
{\| z_{+} \|}_{L^\infty} {\| z_{+} \|}_{L^2}
 \}\, \frac{d\tau}{\tau}
     \nonumber  \\
 &&
 \hspace{4mm}
 \leq
 C\, {\| f \|}_{L^\infty} {\| z \|}_{Y}\left(
 \sup_{t\geq t_0} t^{\nu}\int_{t}^{\infty} ({\| z \|}_{L^\infty}+\frac{{\| u_{+}\|}_{L^1}}{\sqrt{\tau}})\, \frac{d\tau}{\tau^{1+\nu}}
 \right) +
      \nonumber \\
 &&
 \hspace{8mm}
 C\, {\| f\|}_{L^\infty} {\| u_{+} \|}_{L^1} {\| u_{+} \|}_{L^2}\sup_{t\geq t_0} t^{\nu} \int_{t}^{\infty} \frac{d\tau}{\tau^{3/2}}
      \nonumber \\
 &&
 \hspace{4mm}
 \leq
 C\, {\| f \|}_{L^\infty} {\| z \|}_{Y}\left(  \frac{{\| z \|}_{Y}}{t_0^{\frac{1}{4}+\nu}}
 +\frac{{\| u_{+} \|}_{L^1}}{t_0^{\frac{1}{2}}} \right)+
 C\, {\| f \|}_{L^\infty} {\| u_{+} \|}_{L^1} {\| u_{+}\|}_{L^2}\frac{1}{t_{0}^{\frac{1}{2}-\nu}},
\end{eqnarray}
for all $0\leq \nu\leq 1/2$. Here, we have also used H\"older's
inequality in the $\tau$-variable to obtain the last inequality.

Next, notice that a straightforward computation  gives
$$
 |z+z_{+}|^2(z+z_{+})= |z|^2z+|z|^2z_{+}+ z^2 \bar z_{+}+ |z|^2 z_{+}+\bar z z_{+}^2 + z |z_{+}|^2 + z_{+}|z_{+}|^2.
$$
Then, similar arguments to the ones given to control the quadratic
terms in $z+z_{+}$ (that is, using (\ref{stri-inh}), pulling out of
the $L^2$-norm ${\| z\|}_{L^\infty}$ or ${\| z_{+} \|}_{L^\infty}$,
and using the estimates (\ref{b23}) and (\ref{b24})) give the
following control of $Y$-norm of the term associated to the cubic
term $|z+z_{+}|^2(z+z_{+})$
\begin{eqnarray}
 \label{b27}
  &&
 {\left\|  \int_{t}^{\infty}e^{i(t-\tau)\partial_x^2} (|z+z_{+}|^2(z+z_{+})) \, \frac{d\tau}{\tau}  \right\|
 }_{Y}
    \nonumber \\
 &&
 \hspace{4mm}
 \leq C\, \sup_{t\geq t_0} t^{\nu}\int_{t}^{\infty}
 {\|
 |z+z_{+}|^2(z+z_{+})
 \|}_{L^2}\, \frac{d\tau}{\tau}
       \nonumber \\
 &&
 \hspace{4mm}
 \leq C\, \sup_{t\geq t_0} t^{\nu}
 \int_{t}^{\infty} \left(
 {\| z \|}_{L^\infty}^{2}{\| z \|}_{L^2}
 + 3{\| z_{+} \|}_{L^\infty}{\| z \|}_{L^\infty} {\| z \|}_{L^2}+
 \right.
    \nonumber  \\
 &&
 \hspace{8mm}
\left.
 2{\| z_{+} \|}_{L^\infty}^{2} {\| z \|}_{L^2}
 + {\| z_{+} \|}_{L^\infty}^{2}{\| z_{+} \|}_{L^2}
 \right)\, \frac{d\tau}{\tau}
     \nonumber  \\
 &&
 \hspace{4mm}
 \leq
 C {\| z\|}_{Y}   \sup_{t\geq t_{0}} t^{\nu} \int_{t}^{\infty}
 \left(
 \frac{{\| z\|}_{L^\infty}^2}{\tau^{1+\nu}}
 + 3 {\| u_{+}\|}_{L^1} \frac{{\| z \|}_{L^\infty}}{\tau^{\frac{3}{2}+\nu}}
 + 2 \frac{{\| u_{+}\|}_{L^1}^{2}}{\tau^{2+\nu}}+
 \right)\, d\tau +
     \nonumber  \\
 &&
 \hspace{8mm}
C {\| u_{+}\|}_{L^1}^{2} {\| u_{+}\|}_{L^2}      \sup_{t\geq t_0} t^{\nu} \int_{t}^{\infty}\frac{d\tau}{\tau^2}
    \nonumber  \\
 &&
 \hspace{4mm}
 \leq
 C\frac{{\| z\|}_{Y}^{3}}{t_{0}^{\frac{1}{2}+2\nu}}+
 {\| u_{+}\|}_{L^1} {\| z\|}_{Y}^{2}\frac{1}{t_{0}^{\frac{3}{4}+\nu}} +
 C\, {\| u_{+}\|}_{L^1}^{2} {\| z\|}_{Y} \frac{1}{t_{0}} +
   \nonumber   \\
 &&
 \hspace{8mm}
 C{\| u_{+}\|}_{L^1}^2 {\| u_{+}\|}_{L^2}\frac{1}{t_{0}^{1-\nu}}
\end{eqnarray}
for all $0\leq \nu\leq 1$.
%
%----------------- Conclusion --------------------------------
%

Therefore, in view of the identity (\ref{b0}), and the inequalities
(\ref{b2}), (\ref{b22}) and (\ref{b25})-(\ref{b27}), we have that
\begin{eqnarray}
 \label{b28}
 {\| Bz \|}_{Y}
 &\leq&
 \frac{c(u_{+})}{t_0^{\frac{1}{4}-\nu} }\left( {\| |f(\cdot)|^2 - |f|^2_{\infty} \|}_{L^2}+
 {\| (\bar f)^2(\cdot)- (\bar f)^2_{\infty}(\cdot)   \|}_{L^2}
 \right)
   \nonumber   \\
 &+&
 \frac{c(u_{+})}{  t_0^{\frac{\gamma}{4}-\nu}  } {\| f \|}^2_{L^\infty} +
 \frac{c(u_{+})}{t_0^{\frac{1}{2}-\nu}} {\| f \|}_{L^\infty}+
 \frac{c(u_{+})}{t_0^{1-\nu}}
   \nonumber   \\
 &+&
 C\, {\| z \|}_{Y}
 \left\{
 {\| f \|}_{\infty}^2 +
 \frac{ {\| f\|}_{L^\infty} }{t_0^{\frac{1}{4}+\nu}} {\| z\|}_{Y} +
 \frac{c(u_{+})}{t_0^{\frac{1}{2}}} {\| f\|}_{L^\infty}
 \right.
    \nonumber   \\
 &&
 \left.
 \frac{{\| z\|}_{Y}^2}{t_0^{\frac{1}{2}+2\nu}}+
 \frac{c(u_{+})}{t_0^{\frac{3}{4}+\nu}} {\| z\|}_{Y}+
 \frac{c(u_{+})}{t_0}
 \right\}
\end{eqnarray}
for any $0\leq \nu\leq \gamma/4$. Here $c(u_{+})$ denotes a positive
constant which depends on the norm of $u_{+}$ in $L^1\cap
L^2(\langle x \rangle^{\gamma})$.

For any fixed $t_0>0$, and $0<\gamma<1$, by choosing $\nu=\gamma/4$,
from (\ref{b28}), we conclude that there exists a (small) positive
consatnt $B_0$, and a constant $R>0$ small with respect to $B_0$ and
$t_0$, such that for all $f$ solution of
$$
  f''+i \frac{s}{2} f'+\frac{f}{2} (|f|^2-A)=0
$$
satisfying $|f|_{+\infty}=|f|_{-\infty}$ and such that ${\|
f\|}_{L^\infty}\leq B_{0}$, and all $u_{+}$ small in $L^1\cap
L^2(\langle x \rangle^{\gamma})$ w.r.t. $t_0$,  $B_0$, ${\|
|f(\cdot)|^2 - |f|^{2}_{\infty}}_{L^2}$,  ${\| (\bar f)^2(\cdot)-
(\bar f)^{2}_{\infty}(\cdot)   \|}_{L^2}$ and $R$, the operator $B$
maps $\mathcal{B}_{R}$ into $\mathcal{B}_{R}$. On the other hand, by
bearing in mind that
$$
 {\| z+z_{+}  \|}_{L^\infty}\leq {\| z\|}_{L^\infty} + C \frac{{\| u_{+}\|}_{L^1}}{\sqrt{t}},
 \qquad {\hbox {for all}}\qquad z\in Y,
$$
similar arguments to the ones given in obtaining the estimates
(\ref{b25})-(\ref{b27}) shows that the operator $B$ defined by
(\ref{b0}) is a contraction on $(\mathcal{B}_R, {\|  \cdot
\|}_{Y})$. As a consequence, the application of the contraction
mapping principle yields the existence of a unique solution $z$ of
the equation (\ref{eqz}) such that
$$
 z\in \mathcal{C}([t_0,\infty), L^2(\R))\cap L^4([t_0,\infty), L^\infty(\R))
$$
satisfying
$$
 {\| z(t) \|}_{L^2(\R)}+ {\| z\|}_{L^4((t,\infty), L^\infty(\R))}=
 \mathcal{O}\left( \frac{1}{t^{\frac{\gamma}{4}}}  \right)
$$
as $t\rightarrow \infty$, for $0<\gamma<1$.

Performing the change of variables (\ref{change2}) and
(\ref{change1}), that is the changes defined by
$$
 v(t,x)= v_f(t,x)+ e^{i\frac{\alpha}{2}\log t} u
 \qquad {\hbox{with}}\qquad
 v_f(t,x)= \bar f \left(  \frac{x}{\sqrt{t}}\right),
 \qquad
 \alpha= 2|f|_{\infty}^{2}-A,
$$
and
$$
 u(t,x)= z(t,x)+ z_{+}(t,x), \qquad {\hbox{with}}\qquad
 z_{+}(t,x)=e^{it\partial_x^2}u_{+}
$$
gives the existence of a unique solution of (\ref{T1a}) such that
$$
 v-\tilde v_f\in \mathcal{C}([t_0,\infty), L^\infty(\R))\cap L^4([t_0,\infty), L^\infty(\R)),
$$
and satisfying (\ref{T1b}). To this end, since $\tilde v_f$ is
defined by (\ref{vftilde}), suffices to notice that
\begin{eqnarray*}
 v-\tilde v_f
 &=&
 ( v_f + e^{i\frac{\alpha}{2}\log t}u)-(v_f + e^{i\frac{\alpha}{2}\log t}(e^{it\partial_x^2}u_{+})(x))
    \\
 &=&
 e^{i\frac{\alpha}{2}\log t}(u- e^{it\partial_x^2} u_{+})
 =
 e^{i\frac{\alpha}{2}\log t} z
\end{eqnarray*}
so that
$$
 {\| v-\tilde v_f\|}_{L^2}+{\| v-\tilde v_f\|}_{L^4((t,\infty), L^2(\R))}=
 {\| z\|}_{L^2}+ {\| z\|}_{L^4((t,\infty), L^2(\R))}.
$$
\noindent Finally, we have to prove that under if the asymptotic
state $u_{+}$ satisfies that both $u_{+}$ and $\partial_x u_{+}$ are
in $L^1\cap L^2(\langle x \rangle^{\gamma})$, then the solution $v$
is such that  $v- \tilde v_f\in H^1$ and (\ref{T1c}) holds.

 Recall that solutions of (\ref{eqvF}) are in
correspondence with solutions $z$ of (\ref{eqz}) through the changes
of variables (\ref{change1})  and (\ref{change2})
(see~subsection~\ref{Preliminaries}). Define the auxiliary functions
$y=\partial_x u$ and $y_{+}=\partial_x z_{+}$, where as before
$z_{+}=e^{it\partial_x^2}u_{+}$. Then, if $z$ is a solution of
(\ref{eqz}), we have that $y$ has to be a solution of
\begin{eqnarray*}
 iy_{t}+ y_{xx}
 &=&
 \frac{1}{2t}\left(
 F_0(y_{+})- F_1(y)+ 2\partial_x(|v_f|^2) (z_{+}-z)+
 \partial_x(v_f^2) (\overline{z_{+}-z})e^{-i\alpha \log t}
 \right.
   \\
 &&
 \left.
 -\partial_{x} NLT(z+z_{+})
 \right).
\end{eqnarray*}
Now, notice that from the fact that $v_f(t,x)=\bar f(x/\sqrt t)$,
the properties of $f$ and $f'$ given in Proposition~\ref{f}, and
those of $z$ already proved, we conclude that the term
$$
 2\partial_x(|v_f|^2) (z_{+}-z)+
 \partial_x(v_f^2) (\overline{z_{+}-z})e^{-i\alpha \log t}
$$
is an integrable in time forcing term. As a consequence we can
follow the same argument as the one used to solve the equation for
$z$, and concude that (\ref{T1c}) holds. This finishes the proof.
%
%--------------------------------------------------------
\subsection{Proof of Theorem~\ref{T2}}
%--------------------------------------------------------
%
%
Let $\tilde t_0>0$ and $0<\gamma<1$. Define $t_0=\frac{1}{\tilde
t_0}$, and denote by $v$ the associated solution of the equation
(\ref{T1a}) verifying
\begin{equation}
 \label{c1}
 {\| v-\tilde v_f \|}_{L^2(\mathbb{R})}+
 {\| v-\tilde v_f \|}_{L^4((t,\infty), L^\infty(\mathbb{R}))}=
 \mathcal{O}\left(  \frac{1}{t^{\frac{\gamma}{4}}} \right),
 \qquad {\hbox{as}}\quad t\rightarrow \infty,
\end{equation}
given by Theorem~\ref{T1}.

Define $u$ to be the pseudo-conformal transformation of the solution
$v$, i.e.
$$
 u=\mathcal{T}(v).
$$
Then, $u$ satisfies equation (\ref{eqs1}). Next, notice that (recall
the definition of $\tilde u_f$ and $\mathcal{T}$ in (\ref{u1}) and
(\ref{v}), respectively)
$$
 \tilde u_f:= \mathcal{T}\left(
 v_f+ (2\pi) e^{i\frac{\alpha}{2} \log t} \frac{e^{i\frac{x^2}{4t}}}{\sqrt{4\pi i t}}\, \hat u_+ \left(\frac{x}{2t}  \right)
 \right),
\qquad \alpha=2|f|^2_{\infty}-A,
$$
so that
$$
 u-\tilde u_f= \mathcal{T}\left(
 v-\left(  v_f+ (2\pi) e^{i\frac{\alpha}{2} \log t} \frac{e^{i\frac{x^2}{4t}}}{\sqrt{4\pi i t}}\, \hat u_+  \left(\frac{x}{2t} \right) \right)
 \right)
$$
where
\begin{multline}
 v-\left(
 v_f+ (2\pi) e^{i\frac{\alpha}{2} \log t} \frac{e^{i\frac{x^2}{4t}}}{\sqrt{4\pi i t}} \,
 \hat u_+ \left(\frac{x}{2t} \right)
 \right)=
  \nonumber \\
 (v-\tilde v_f) +  e^{i\frac{\alpha}{2}\log t} \left(
  \left( e^{it\partial^2_x} u_+ \right) (x)-
  (2\pi)  \frac{e^{i\frac{x^2}{4t}}}{\sqrt{4\pi i t}}\,
 \hat u_+ \left(\frac{x}{2t} \right)
 \right),
\end{multline}
recall the definition of $\tilde v_f$ in (\ref{tildevf}).

Due to the invariance of $L^2(\R)$ and $L^4\left(  (0,t),
L^\infty(\R) \right)$ under the pseudo-conformal transformation
$\mathcal{T}$, and the decay estimates (\ref{c1}), in order to prove
(\ref{C1a}) it suffices to study the behaviour of
$$
 \left( e^{it\partial_x^2} u_+ \right)(x)-
 (2\pi)  \frac{e^{i\frac{x^2}{4t}}}{\sqrt{4\pi i t}} \, \hat u_+
 \left(  \frac{x}{2t}\right)
$$
in $L^2(\R)$ and $L^4\left( (t,\infty), L^\infty(\R)  \right)$, as
$t$ goes to infinity.

On the one hand, using the expression of $e^{it\partial_x^2}u_+$ as
a convolution (see (\ref{sh-solution1})) and Plancherel's indentity,
we have
\begin{eqnarray*}
 &&
 \left\|
 \, \left( e^{it\partial_x^2} u_+ \right)(x)-
 (2\pi)  \frac{e^{i\frac{x^2}{4t}}}{\sqrt{4\pi i t}} \, \hat u_+
 \left(  \frac{x}{2t}\right)
 \right\|_{L^2}=
   \nonumber \\
 &&
 \hspace{1truecm}
  C\, {\| (u_+(\cdot) (e^{i\frac{(\cdot)^2}{4t}}-1))^{\widehat{\ }}  \|}_{L^2}
 =C{\| u_+(\cdot) (e^{i\frac{(\cdot)^2}{4t}}-1) \|}_{L^2}
 \leq \frac{C}{t^{\frac{\gamma}{4}}}\, {\| u_+\|}_{L^2(|x|^\gamma)}.
\end{eqnarray*}
Here, we have used Lemma~\ref{lemma1} in obtaining the last
inequality.

On the other hand, from the decay estimate (\ref{dis})
$$
  {\|  e^{it\partial_x^2} u_+ \|}_{L^4\left(  (t,\infty), L^\infty(\R) \right)}
  \leq \frac{C}{t^{\frac{1}{4}}}{\| u_+\|}_{L^1}
$$
from which it follows that
\begin{eqnarray*}
 &&
 {\left\|
  \left(  e^{it\partial_x^2} u_+ \right)(x)-
  (2\pi) \frac{e^{i\frac{x^2}{4t}}}{\sqrt{4\pi i t}}\, \hat u_+\left( \frac{x}{2t} \right)
 \right\|}_{L^4\left( (t,\infty), L^\infty(\R) \right)}  \leq
   \\
 &&
 \hspace{1truecm}
 \frac{C}{t^{\frac{1}{4}}}\, {\| u_+\|}_{L^1}+
 \frac{C}{t^{\frac{1}{4}}}\, {\| \hat u_+\|}_{L^\infty}
 =\mathcal{O}\left(  \frac{1}{t^{\frac{1}{4}}} \right),
 \qquad {\hbox{as}}\quad t\rightarrow \infty.
\end{eqnarray*}
From the above inequalities, we get
\begin{eqnarray*}
  &&
 {\left\|
  \left(  e^{it\partial_x^2} u_+ \right)(x)-
  (2\pi) \frac{e^{i\frac{x^2}{4t}}}{\sqrt{4\pi i t}}\, \hat u_+\left( \frac{x}{2t} \right)
 \right\|}_{L^2(\R)}  +
    \\
 &&
 {\left\|
  \left(  e^{it\partial_x^2} u_+ \right)(x)-
  (2\pi) \frac{e^{i\frac{x^2}{4t}}}{\sqrt{4\pi i t}}\, \hat u_+\left( \frac{x}{2t} \right)
 \right\|}_{L^4\left( (t,\infty), L^\infty(\R) \right)}
 =
 \mathcal{O}\left(  \frac{1}{t^{\frac{\gamma}{4}}}\right),
 \qquad {\hbox{as}}\quad t\rightarrow \infty,
\end{eqnarray*}
for any $0<\gamma< 1$, and $u_+\in L^1\cap L^2(\langle x
\rangle^\gamma)$.

Now, (\ref{C1b}) is an immediate consequence of the triangle
inequality, (\ref{C1a}) and Plancherel's identity. Also, inequality
(\ref{C1c}) follows from (\ref{C1a}) and (\ref{C1b}), by using the
general inequality
$$
 {\| |f|^2-|g|^2\|}_{L^2}\leq (  {\| f\|}_{L^2} +{\| g \|}_{L^2})\, {\| f-g\|}_{L^2},
$$
for any functions $f$ and $g$ in $L^2$.

Finally, assume by contradiction that there exists $g(\cdot,t)\in
L^2(\R)$ defined in a time interval $(0,T_0>0]$ such that
\begin{equation}
 \label{c2}
 {\left\|  u(t,x) - \frac{e^{i\frac{x^2}{4t}}}{\sqrt{t}}\, f\left(  \frac{x}{\sqrt{t}} \right)-g(t,x)  \right\|}_{L^2}\rightarrow 0
 \qquad {\hbox{as}}\qquad t\rightarrow 0.
\end{equation}
Then, using the definition of $\tilde u_f$ in (\ref{u1}) and the
triangular inequality, we obtain that
\begin{eqnarray*}
 &&{\left\|
 \sqrt{\pi i}  e^{i\frac{\alpha}{2} \log t} \widehat{\overline{u_{+}}}  \left( -\frac{x}{2}\right)-g(t,x)  \right\|}_{L^2}
    \nonumber \\
 &&
 \hspace{1truecm} =
 {\left\|  \tilde u_f(t,x) - \frac{e^{i\frac{x^2}{4t}}}{\sqrt{t}}\, f\left(  \frac{x}{\sqrt{t}} \right)-g(t,x)  \right\|}_{L^2}
    \nonumber  \\
 &&
 \hspace{1truecm} =
 {\left\| (\tilde u_f -u)(t,x) +\left(
  u(t,x)-  \frac{e^{i\frac{x^2}{4t}}}{\sqrt{t}}\, f\left(  \frac{x}{\sqrt{t}} \right)-g(t,x)   \right)
 \right\|}_{L^2}
    \nonumber  \\
 &&
 \hspace{1truecm}  \leq
 {\| \tilde u_f -u  \|}_{L^2} +
 {\left\| u(t,x)-  \frac{e^{i\frac{x^2}{4t}}}{\sqrt{t}}\, f\left(  \frac{x}{\sqrt{t}} \right)-g(t,x)     \right\|}_{L^2}.
\end{eqnarray*}
Thus, from (\ref{C1a}) and (\ref{c2}) and the above identity we
conclude that
$$
   g(t,x)= \sqrt{\pi i} e^{i\frac{\alpha}{2}\log t}  \widehat{\overline{u_{+}}}  \left( -\frac{x}{2}\right)
   \qquad {\hbox{a.e}}\quad x,
$$
which does not have limit in $L^2$ as $t\rightarrow 0$, unless
$\alpha= 2|f|^2_{\infty}-A =0$.

It remains to prove (\ref{C1d1}) and (\ref{C1d2}). Using (\ref{T1c})
and the inequality $|g|^2\leq {\| g \|}_{L^2} {\| g \|}_{L^2}$ in
one dimension, we get that
\begin{equation}
 \label{c3}
 {\| v-\tilde v_f  \|}_{L^{\infty}}= \mathcal{O}\left(  \frac{1}{t^{\frac{\gamma}{4}}} \right), \qquad t\rightarrow \infty, \qquad (0<\gamma<1).
\end{equation}
From the definition of $u$ in terms of $v$, given by the
pseudo-conformal transformation (\ref{v}), we write
\begin{eqnarray*}
 u(t,x)
 &=&
 \mathcal{T} v(t,x)= \frac{e^{i\frac{x^2}{4t}}}{\sqrt{t}} \bar v \left( \frac{1}{t}, \frac{x}{t} \right)
   \\
 &=&
 \frac{e^{i\frac{x^2}{4t}}}{\sqrt{t}} \left(
 {\overline{v- \tilde v_f}} \left( \frac{1}{t}, \frac{x}{t} \right)-
  {\overline{\tilde v_f}} \left( \frac{1}{t}, \frac{x}{t} \right)
 \right),
\end{eqnarray*}
where (see (\ref{vftilde}) or (\ref{tildevf}))
$$
 \tilde v_f(t,x)=
 \bar f\left( \frac{x}{\sqrt{t}}\right)+
 e^{i\frac{\alpha}{2}\log t} \left( e^{it\partial_x^2} u_{+}  \right)(x), \qquad
 \alpha= 2|f|_{\infty}^{2}-A.
$$
Then (\ref{C1d1})-(\ref{C1d2}) follow from the above identities,
(\ref{c3}), and the decay estimate for $e^{it\partial_x^2}u_{+}$
given in (\ref{dis}). This finishes the proof of Theorem~\ref{T2}.

%-----------------------------------------------------------------------------------
\subsection{Proof of Corollary~\ref{T3}}
Theorem~\ref{T2} gives the existence of a filament function $u(t,x)$
which  is regular and bounded for $0<t<\tilde t_0$. From the
filament function $u$ given by Theorem~\ref{T2}, one can construct a
corresponding curve $\Xn$ solution of LIE.

First, notice that at least in the case of odd solutions the curve
$\Xn_{f}(t,x)$ has a point of curvature $0$ (the curvature of an odd
solution vanishes at least at the point $x=0$), and as a consequence
here we need to consider a different parallel frame (other that the
Serret-Frenet frame) to avoid the restriction that the curvature of
the curvature should not vanish. Precisely, one can consider the
parallel frame of vectors  $\{ \Tn, \mathbf{e_1}, \mathbf{e_2}\}$
given by the system of equations
\begin{eqnarray}
 \label{T31}
 \left\{
 \begin{array}{ll}
 \Tn_x=\qquad \alpha \mathbf{e_1}+ \beta \mathbf{e_2} \\[2ex]
 \mathbf{e_1}_x= -\alpha \Tn\\[2ex]
 \mathbf{e_2}_x= -\beta \Tn,
 \end{array}
 \right.
\end{eqnarray}
where the quantities $\alpha$ and $\beta$ are defined through the
function $u$ by
%\begin{equation}
% \label{T32}
$$
 u=\alpha+ i\beta,
$$
%\end{equation}
to construct the tangent vector $\Tn$ solution of $\Tn_t=\Tn\times
\Tn_{xx}$. Then, using the regularity of $u$, and after integration
with the initial conditions
$$
 \Xn(\tilde t_0, 0)=(0,0,0)
 \qquad {\hbox{and}}\qquad
 \Xn_{x}(\tilde t_0,0)= (1,0,0),
$$
we get a curve $\Xn(t,x)$ solution of LIE.\footnote{ Conversely,
using the parallel frame defined by the system (\ref{T31}), it can
be also proved that if $\Xn(t,x)$ is a regular solution of LIE, and
define the function $u=\alpha+i \beta$, then $u$ solves the
$1$d-cubic Schr\"odinger equation
$$
\displaystyle{
 iu_t+ u_{xx}+\frac{u}{2} (|u|^2-A(t))=0
}
$$
with $A(t)= -|u|^2(0,t)/2- < \partial_t\mathbf{e_{1}},
{\mathbf{e_2}}>(0,t)$. } The details can be found for example in
\cite{BV3} and \cite{BV4}, see also \cite{NSVZ}.

Once $\Xn(t,x)$ has been constructed for $0<t<\tilde t_0$, part
{\it{i)}} is an immediate consequence of (\ref{C1d1})-(\ref{C1d2}),
the boundedness property of $f$, and the fact that $u$ is the
filament function associated to $\Xn(t,x)$ (thus
$|u(t,x)|$=$|c(t,x)|$, with $c$ the curvature of $\Xn$).

The existence of $\X_0(x)$, the trace of $\Xn(t,x)$ at time $t=0$,
follows from the integrability of $\Xn_t$ at $t=0$ thanks to the
uniform bound of the curvature in part {\it{i)}}. Indeed, since
$\Xn(t,x)$ is a solution of LIA, from the system of equations
(\ref{T31}), and the fact that the vectors $\mathbf{e_1}$ and
$\mathbf{e_2}$ are unitary, it follows that
\begin{eqnarray*}
 |\Xn_t(t,x)|
 &=&
 |\Xn_x\times \Xn_{xx}|=|\Tn\times \Tn_{x}|=
 |\Tn\times (\alpha \mathbf{e_1}+\beta \mathbf{e_2})|
   \\
 &=&
 |\alpha \mathbf{e_2}-\beta \mathbf{e_1}|=
 \sqrt{\alpha^2+\beta^2}=
 |u(t,x)|=|c(t,x)|
 \leq \frac{c_1}{\sqrt{t}},
\end{eqnarray*}
uniformly on the interval $x\in(-\infty, \infty)$, since
$u=\alpha+i\beta$.

Therefore, for any fixed positive times $t_1$, and $t_2$ with
$t_1<t_2$, we have that
\begin{eqnarray*}
 |\Xn(t_1,x)- \Xn(t_2,x)|
 &=&
 \left|
 \int_{t_1}^{t_2} \Xn_t(t', x)\, dt'
 \right|
 \leq
 \int_{t_1}^{t_2}
 |\Xn_t(t',x)|\, dt'
  \\
 &\leq&
 c_1\,
 \int_{t_1}^{t_2}
 \frac{dt'}{\sqrt{t'}},
\end{eqnarray*}
from which the existence of the limit $\lim_{t\rightarrow 0}
\Xn(t,x)=\Xn_{0}(x)$ follows by taking $t_2=t>0$ and letting $t_1$ go to zero
in the above inequality. Moreover, we have
$$
 |\Xn(t,x)-\Xn_0(x)|\leq 2c_1\sqrt{t}.
$$
Finally, the regularity property of  $\Xn_{0}$ easily follows
from the above inequality, and the identity
$$
 \Xn_0(x)-\Xn_0(y)=
 [\Xn_0(x)- \Xn(t,x)] -
 [\Xn_0(y)-\Xn(t, y)] +
 [\Xn(t,x)- \Xn(t,y)].
$$
To this end, if suffices to observe that
$$
 |\Xn(t,x)- \Xn(t,y)|=
 \left|
 \int_{x}^{y}
 \Tn(t,z)\, dz
 \right| \leq |x-y|
$$
since the tangent vector to the curve, $\Tn$, is unitary. As a
consequence,
$$
 |\Xn_0(x)- \Xn_{0}(y)|\leq 2c_1\sqrt{t}+ |x-y|\leq c_3 |x-y|
$$
for some non-negative constant $c_3$, whenever $t$ is sufficiently
small. Therefore, we conclude that $\Xn_0(x)$ is a Lipschitz
continuous function.
%
%%%%%%%%%%%%%%%%%%%%%%%%%%%%%%%%%%%%%%%%%%%%%%%%%%%%%%%%%%%%%%%%
\section{The initial value problem for the principal value distribution}
\label{section-pv} We begin this section proving the existence of
non-trivial solutions
$$
 u_f(t,x)= \frac{e^{i\frac{x^2}{4t}}}{\sqrt{t}}f \left( \frac{x}{\sqrt{t}} \right)
$$
of
$$
 iu_t +u_{xx}+\frac{u}{2} (|u|^2-\frac{A}{t})=0
$$
such that $u_f(t,\cdot)$ converges as a distribution to
$$
 u_f(0,x)= z_0\, \pv \frac{1}{x},
$$
for some $z_0\in \C\setminus \{0\}$, and appropriate values of $A$.
Moreover, these solutions are characterized by the property that
$2|f|^2_{\infty}-A=0$, so that the solution $u$ constructed in
Theorem~\ref{T2} has a trace at $t=0$. We have the following lemma:
\begin{lemma}
\label{lemma-pv}
 For any $a\neq 0$, there exist $A_a$ and a non-trivial odd solution $f$ of
 \begin{equation}
 \label{lemma-f}
 f''+i\frac{x}{2} f'+\frac{f}{2} (|f|^2-\frac{A_a}{t})=0,
 \end{equation}
such that
$$
 \lim_{t\longrightarrow 0^{+}} \frac{e^{i\frac{x^2}{4t}}}{\sqrt{t}}f\left( \frac{x}{\sqrt{t}}  \right)= z_0 \pv \frac{1}{x}, \quad z_0\neq 0
$$
in the distributional sense. Moreover,
$$
|z_0|=2|f'|_{\infty}
\qquad {\hbox{with}}\qquad
 \frac{\sqrt{3}}{2} |a| \leq |z_0|< |a|.
$$
In addition, $f$ satisfies ${\|f \|}_{L^\infty}\leq 2|a|$.
\end{lemma}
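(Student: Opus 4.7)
The plan is to pick a suitable odd self-similar profile within the one-parameter family of odd solutions of (\ref{eqf}), and then read off the distributional limit from the asymptotic expansion in Proposition~\ref{f}. Fix $a\neq 0$ and, for $\lambda\in[-1,1]$, let $f_\lambda$ be the odd solution of (\ref{lemma-f}) with $A=a\lambda$ and initial data $f_\lambda(0)=0$, $|f_\lambda'(0)|^2=\tfrac{a^2}{4}(1-\lambda^2)$ (these are the Hasimoto data of the odd vortices of Section~\ref{introduction}). Proposition~\ref{f}\,i supplies $|f_\lambda'|^2+\tfrac14(|f_\lambda|^2-A)^2=\tfrac{a^2}{4}$ on $\R$, and the geometric identity (\ref{key-identities}) reads $|f_\lambda|^2=a(\lambda-T_3)$. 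Hence the resonance condition $2|f|_\infty^2-A=0$ of Remark~\ref{alpha=0} --- precisely the condition that gives $u_f$ a distributional trace at $t=0$ --- is equivalent to $T_3(\infty;\lambda)=\lambda/2$.

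To secure such a $\lambda$, define $\psi(\lambda):=T_3(\infty;\lambda)-\lambda/2$. At $\lambda=\pm 1$ the initial curvature vanishes, so $f_{\pm 1}\equiv 0$ and the curve is a straight line with constant tangent $(0,0,\pm 1)$, giving $\psi(\pm 1)=\pm\tfrac12$. Continuous dependence on initial data together with the intermediate value theorem then produce a zero $\lambda_0\in(-1,1)$; an auxiliary check at $\lambda=0$ (or, alternatively, using that the sign change of $\psi$ forces a zero in at least one of $(-1,0)$ and $(0,1)$) lets us take $\lambda_0\neq 0$. Set $A_a:=a\lambda_0$ and $f:=f_{\lambda_0}$; it is nontrivial since $|f'(0)|^2=\tfrac{a^2}{4}(1-\lambda_0^2)>0$. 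Evaluating the conservation law at $x=\infty$ with $|f|_\infty^2=A/2$ yields $|f'|_\infty^2=\tfrac{a^2}{4}-\tfrac{A^2}{16}=\tfrac{a^2(4-\lambda_0^2)}{16}$. The bound ${\|f\|}_{L^\infty}\leq 2|a|$ is immediate from $(|f|^2-A)^2\leq a^2$ and $|A|\leq|a|$.

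To extract the distributional limit, use that the resonance condition simplifies the phases in Proposition~\ref{f}\,iii to $\phi_2(y)=-|f|_\infty^2\log|y|$ and $\phi_3(y)=-y^2/4$, so
\begin{equation*}
f(y)=|f|_\infty e^{ic_\pm}|y|^{-i|f|_\infty^2}+\frac{2i|f'|_\infty e^{id_\pm}}{y}\,e^{-iy^2/4}+O(|y|^{-2})\qquad(y\to\pm\infty),
\end{equation*}
and oddness of $f$ forces $e^{ic_-}=-e^{ic_+}$ and $e^{id_-}=e^{id_+}$. Substituting $y=x/\sqrt t$ in $u_f(t,x)=t^{-1/2}e^{ix^2/(4t)}f(x/\sqrt t)$, the $1/y$-term produces $\tfrac{2i|f'|_\infty e^{id_+}}{x}$ uniformly on both sides of $0$, the remainder is $O(\sqrt t/x^2)$, and the leading term becomes $|f|_\infty e^{ic_+}\sgn(x)\,t^{i|f|_\infty^2/2}|x|^{-i|f|_\infty^2}\tfrac{e^{ix^2/(4t)}}{\sqrt t}$. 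Testing against $\phi\in C_c^\infty(\R)$ and rescaling $x=u\sqrt t$, this reduces to a unimodular constant times $\int\sgn(u)\,e^{iu^2/4}|u|^{-i|f|_\infty^2}\phi(u\sqrt t)\,du$, which tends to $0$ as $t\to 0^+$: on a window $|u|\leq R$, dominated convergence combined with the parity of $e^{iu^2/4}|u|^{-i|f|_\infty^2}$ kills the limit, while the tail $|u|>R$ is $O(1/R)$ by integration by parts against $e^{iu^2/4}$. This proves $u_f(t,\cdot)\to z_0\,\pv\tfrac1x$ distributionally with $z_0=2i|f'|_\infty e^{id_+}$, so $|z_0|=2|f'|_\infty=\tfrac{|a|}{2}\sqrt{4-\lambda_0^2}\in\bigl[\tfrac{\sqrt 3}{2}|a|,\,|a|\bigr)$ since $0<|\lambda_0|\leq 1$.

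The main obstacle is the existence step: ruling out that the only zero of $\psi$ produced by the IVT sits at $\lambda_0=0$; this likely demands a direct ODE analysis at $\lambda=0$ or a monotonicity argument for $\psi$. A secondary technicality is the distributional cancellation of the leading oscillatory term in the third paragraph, which genuinely requires both the $\sgn(x)$-factor forced by oddness and oscillatory-integral decay of $\int e^{iu^2/4}(\cdots)\,du$ at infinity.
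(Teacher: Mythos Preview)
Your proof is correct and follows the same line as the paper's: the paper's IVT function is $F_a(\lambda)=2T_3(\infty)-T_3(0)=2\psi(\lambda)$, the asymptotics are read off Proposition~\ref{f} in the same way, and the bounds come from the same conservation-law computation.

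On your flagged obstacle: the paper does not exclude $\lambda_a=0$ either. It simply records $\lambda_a\in(-1,1)$ and deduces $\tfrac{3}{16}a^2<|f'|_\infty^2\leq\tfrac{a^2}{4}$, which actually yields $\tfrac{\sqrt3}{2}|a|<|z_0|\leq|a|$ rather than the inequalities stated in the lemma; the strict upper bound in the statement appears to be a minor inaccuracy. The substantive claims---existence of a non-trivial $f$, $z_0\neq 0$, and $\|f\|_{L^\infty}\leq 2|a|$---follow from $\lambda_a\in(-1,1)$ alone, so you should not treat this as a genuine obstruction. For the distributional vanishing of the leading oscillatory term, the paper argues slightly differently: it notes that $g(y)=e^{i(y^2/4+\phi_2(y))}\sgn(y)$ is bounded and odd with a continuous Fourier transform vanishing at the origin, whence $t^{-1/2}g(\cdot/\sqrt t)\to 0$ in $\mathcal S'$ by Parseval. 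Your direct oscillatory-integral estimate after rescaling $x=u\sqrt t$ is an equally valid alternative; the paper also makes the splitting $|x|\leq M\sqrt t$ versus $|x|>M\sqrt t$ explicit before invoking the expansion, which you do implicitly through your window $|u|\leq R$.
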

\begin{proof}
First, assume  $f$ is an odd solution of (\ref{lemma-f}) such that
$$
 2|f|^2_{\infty}-A=0.
$$
Then, from the asymptotic behaviour for odd solutions of (\ref{lemma-f}) established in Proposition~\ref{f}, it easily follows that
$$
 f(x)= |f|_{\infty} e^{ic_{+}} e^{i\phi_2(x)} \sgn (x)
     + 2i |f'|_{\infty} e^{id_{+}} \frac{e^{i\phi_3(x)}}{x}
     +\mathcal{O}\left( \frac{1}{|x|}  \right)
$$
as $|x|\longrightarrow \infty$, with

$$
  \phi_2(x)= (|f|^2_{\infty}-A)\log |x|
  \qquad {\hbox {and}}\qquad
  \phi_3(x)= -(x^2/4)- (2|f|^2_{\infty}-A)\log |x|,
$$
and $d_{+}\in [0,2\pi)$.

Recall that $f$ is regular and odd, then, by using the dominated convergence theorem, we have that
$$
 \frac{e^{i\frac{x^2}{4t}}}{\sqrt{t}} f\left( \frac{x}{\sqrt{t}}  \right)\chi_{|x|\leq M\sqrt{t}}(x), \qquad M>> 1
$$
goes to zero in $\mathcal{S}'(\R)$ as $t\rightarrow 0^{+}$. On the
other hand, the function $g(x)= e^{i(\frac{x^2}{4}+\phi_2(x))}\sgn
(x)$ is a bounded, odd and has a continuous Fourier transform that
is zero at zero. Hence, by Parseval theorem $\frac{1}{\sqrt{t}}
g(x/\sqrt{t})$ also tends to zero as $t\downarrow 0$. Finally, the
error term is integrable and odd, therefore arguing as we did before
the convergence of the error term to zero follows by using the
dominated convergence theorem.

The  convergence of
$$
 u_f(t,x)= \frac{e^{i\frac{x^2}{4t}}}{\sqrt{t}} f\left( \frac{x}{\sqrt{t}} \right)
$$
as $t\downarrow 0$ to $z_0\pv (1/x)$, with $z_0$ such that $|z_0|= 2|f'|_{\infty}$ easily follows from previous remarks and  the hypothesis that $2|f|^2_{\infty}-A=0$.

Now, we continue to prove that, for any given $a\neq 0$, there
exists $A_a\in \R$, and an odd solution $f$ of (\ref{lemma-f})
satisfying the condition $2|f|^2_{\infty}-A_a=0$.

Indeed, for fixed $a\neq 0$, and $-1\leq \lambda\leq 1$, let $\Xn_{a,\lambda}(t,x)$ be an odd solution of LIA, that is a solution of LIA the form $\Xn_{a,\lambda}(t,x)= e^{\frac{\A}{2}\log t}\sqrt{t} \Gn_{a,\lambda}(x\sqrt{t})$ with $\Gn_{a,\lambda}$ the solution of (\ref{G}) with the initial conditions
\begin{equation}
 \label{pv-1}
 \Gn_{a,\lambda}(0)=(0,0,0)
 \qquad {\hbox{and}}\qquad
 (\Gn_{a,\lambda})'(0)=(0,\sqrt{1-\lambda^2}, \lambda).
\end{equation}
Define the function $F_a$ as follows
$$
  F_a(\lambda)= 2 T_{3,a,\lambda}(\infty)- T_{3,a,\lambda}(0),
$$
where, as before $T_{3,a,\lambda}$ denotes the third component of the tangent vector to the curve $\Xn_{a,\lambda}$\footnote{Recall that for odd solutions of LIA, the third component of the associated tangent vector, $T_{3}$, is an even function. Thus, in particular $T_3(+\infty)=T_3(-\infty)$}.
Notice that, for $\lambda=1$, $\Gn_{a,1}(x)=(0,0,x)$, and therefore $F_a(1)=2- 1>0$. Also, for $\lambda=-1$, $\Gn_{a,-1}(x)=(0,0,-x)$, and therefore $F_a(-1)= -2-(-1)<0$. Since the map $(\Gn(0), \Gn'(0),a)\longrightarrow T_3(\infty)$ is continuous (see~\cite[Proposition~2, pp.~2101]{GV}), we conclude that there exists $\lambda_a\in (-1,1)$ such that $F_{a}(\lambda_a)=0$, that is $2T_{3,a,\lambda_{a}}-T_{3,a,\lambda_{a}}=0$.

Notice that the associated function $f$ (through the Hasimoto transfom  and the change of variables (\ref{u-f})) is an odd solution of
$$
 f''+i \frac{x}{2}f'+\frac{f}{2}(|f|^2-A_{a})=0
$$
with $A_a=a\lambda_a$ (recall that $A_{a}$ is given in terms of the initial conditions (\ref{pv-1}) by the identity (\ref{A1})), and from (\ref{key-identities}) we have that
\begin{eqnarray*}
 2|f|^2_{\infty}-A_a
 &=&
 2(-aT_{3,a,\lambda_a}(\infty)+A_a)-A_a= -2aT_{3,a,\lambda_a}+A_a
    \\
 &=&
 -a(2T_{3,a,\lambda_a}(\infty)- T_{3,a,\lambda_a(0)})=0,
\end{eqnarray*}
since $T_{3,a,\lambda_a}(0)= \lambda_a$ (see (\ref{pv-1})).

Finally, for odd solutions of LIA, notice that the conservation law in Proposition~\ref{f} becomes
$$
 |f'|^2(x)+\frac{1}{4}(|f|^2(x)-A_a)^2=\frac{a^2}{4}
$$
(the value of the constant on the r.h.s of the above identity
follows from the identities (\ref{key-identities}) and the initial
conditions (\ref{pv-1})). From which we get that
$$
 |f(x)|\leq |a|+|A_a|= |a|(1+|\lambda_a|), \qquad \forall\, x\in\R
$$
and
$$
 |f'|^2_{\infty}= \frac{a^2}{4}-\frac{1}{4}(|f|^2_{\infty}-A_a)^2=
 \frac{a^2}{4}-\frac{1}{4} \left(  \frac{A_a}{2} -A_a \right)^2=
 \frac{a^2}{4}\left( 1- \frac{\lambda_a^2}{4}  \right)
$$
by using the condition $2|f|^2_{\infty}-A_{a}=0$, and that
$A_a=a\lambda_a$. From the above formulae we conclude that
$$
 \frac{3}{16} a^2<|f'|^2_{\infty}\leq \frac{a^2}{4}
 \qquad {\hbox{and}}\qquad
 {\|f\|}_{L^\infty}\leq 2|a|
$$
since $\lambda_a\in (-1,1)$. This concludes the proof of the lemma.
\end{proof}
%%%%%%%%%%%%%%%%%%%%%%%%%%%%%%%%%%%%%%%%%%%%%%%%%
%
As a consequence of Theorem~\ref{T2} and Lemma~\ref{lemma-pv}, we obtain the following result:
%%%%%%%%%%%%%%%%%%%%%%%%%%%%%%%%%%%%%%%%%%%%%%%%%%%%%%%%%%%%%%%%%%%%%%
\begin{theorem}
 \label{T4}
 Let $a\neq 0$ sufficiently small, and consider $A_a$, $f$, and $z_0$ as in Lemma~\ref{lemma-pv}. Then,
 there exists $\varepsilon >0$ such that for any given $u_{+}$ with
 ${\|u_{+}\|}_{L^1\cap L^2(\langle x \rangle^{\gamma})}\leq \varepsilon$ and
 $0<\gamma<1$, the initial value problem:
 \begin{eqnarray*}
 \left\{
 \begin{array}{ll}
 \displaystyle{
 iu_t+u_{xx}+\frac{u}{2} (|u|^2- \frac{A_{a}}{t})=0
 }& \\[2ex]
 u(0,x)= z_0 \pv \frac{1}{x}+ \sqrt{\pi i} \widehat{\overline{u_{+}}}\left(-\frac{x}{2}\right)
 \end{array}
 \right.
 \end{eqnarray*}
 has a unique solution $u(t,x)$ such that
 $$
 u-\tilde u_f\in\mathcal{C}((0,1], L^2(\R))\cap L^4((0,1], L^{\infty}(\R))
 $$
 where
 $$
  \tilde u_f(t,x)= \frac{e^{i\frac{x^2}{4t}}}{\sqrt{t}}f\left( \frac{x}{\sqrt{t}} \right)+
  \sqrt{\pi i} \widehat{\overline{u_{+}}}(-x/2).
 $$
\end{theorem}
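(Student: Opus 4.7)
The plan is to obtain Theorem~\ref{T4} as a direct consequence of Theorem~\ref{T2} combined with Lemma~\ref{lemma-pv}, once we verify that the two ingredients can be matched together, namely that for the function $f$ built in Lemma~\ref{lemma-pv} the smallness hypothesis of Theorem~\ref{T2} can be arranged, and that the resulting profile $\tilde u_f$ takes the prescribed initial trace in the distributional sense.

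First, I would fix $a\neq 0$, and let $A_a$, $f$, $z_0$ be as produced by Lemma~\ref{lemma-pv}. The key observation is that the $f$ constructed there is built precisely so that $2|f|^2_{\infty}-A_a=0$, i.e.\ $\alpha=0$. Consequently the profile appearing in Theorem~\ref{T2} collapses to
$$
 \tilde u_{f}(t,x)= \frac{e^{i\frac{x^2}{4t}}}{\sqrt{t}}\,f\!\left(\frac{x}{\sqrt{t}}\right) + \sqrt{\pi i}\,\widehat{\overline{u_{+}}}\!\left(-\frac{x}{2}\right),
$$
with no $t$-dependent logarithmic phase in front of the $u_+$ term. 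Moreover Lemma~\ref{lemma-pv} guarantees $\|f\|_{L^{\infty}}\leq 2|a|$, so choosing $|a|$ sufficiently small ensures $\|f\|_{L^\infty}\leq B_0$, where $B_0$ is the threshold from Theorem~\ref{T2}. Taking $\tilde t_0=1$ and $u_+$ small in $L^1\cap L^2(\langle x\rangle^\gamma)$ (with smallness measured against $B_0$, $f$, and $\tilde t_0$, all of which are now fixed), Theorem~\ref{T2} produces a unique solution $u$ on $(0,1]$ with $u-\tilde u_f\in \mathcal{C}((0,1],L^2(\R))\cap L^4((0,1],L^\infty(\R))$, solving the PDE in (\ref{eqs1}) with $A=A_a$.

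It remains to identify the distributional trace of $u$ at $t=0$ with the claimed initial datum. From (\ref{C1a}) of Theorem~\ref{T2} we have ${\|u(t,\cdot)-\tilde u_f(t,\cdot)\|}_{L^2}=\mathcal{O}(t^{\gamma/4})\to 0$, so in particular $u(t,\cdot)-\tilde u_f(t,\cdot)\to 0$ in $\mathcal{S}'(\R)$. Since the term $\sqrt{\pi i}\,\widehat{\overline{u_+}}(-\cdot/2)$ in $\tilde u_f$ is independent of $t$ (this is where $\alpha=0$ is crucial), and Lemma~\ref{lemma-pv} asserts
$$
 \frac{e^{i\frac{x^2}{4t}}}{\sqrt{t}}\,f\!\left(\frac{x}{\sqrt{t}}\right)\xrightarrow[t\to 0^{+}]{}\; z_0\,\pv\frac{1}{x}\qquad\text{in }\mathcal{S}'(\R),
$$
the triangle inequality yields
$$
 u(t,\cdot)\xrightarrow[t\to 0^{+}]{}\; z_0\,\pv\frac{1}{x}+\sqrt{\pi i}\,\widehat{\overline{u_+}}\!\left(-\frac{\cdot}{2}\right)\qquad\text{in }\mathcal{S}'(\R),
$$
which is precisely the prescribed initial condition.

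Uniqueness in the stated class is inherited directly from the uniqueness statement in Theorem~\ref{T2}, since any solution of the IVP in the class $u-\tilde u_f\in\mathcal{C}((0,1],L^2)\cap L^4((0,1],L^\infty)$ must coincide with the one constructed above on $(0,1]$. The only mild obstacle in this argument is book-keeping of the smallness assumption: the constant $\varepsilon$ controlling $\|u_+\|_{L^1\cap L^2(\langle x\rangle^\gamma)}$ must be chosen after $f$ (and hence after $a$) has been fixed, because Theorem~\ref{T2} allows the admissible norm of $u_+$ to depend on $f$ through quantities such as ${\||f|^2-|f|_\infty^2\|}_{L^2}$ and ${\|\bar f^2-(\bar f)_{\infty}^2\|}_{L^2}$; these are finite thanks to Proposition~\ref{f}, so this poses no real difficulty, only a choice of constants.
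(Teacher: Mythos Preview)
Your proposal is correct and follows exactly the approach the paper intends: the paper presents Theorem~\ref{T4} simply as ``a consequence of Theorem~\ref{T2} and Lemma~\ref{lemma-pv}'' without a separate proof, and you have accurately filled in the details --- in particular the observation that $\alpha=2|f|^2_\infty-A_a=0$ removes the logarithmic phase in $\tilde u_f$, that $\|f\|_{L^\infty}\leq 2|a|$ gives the required smallness for small $|a|$, and that the distributional trace at $t=0$ follows from (\ref{C1a}) together with the convergence statement of Lemma~\ref{lemma-pv}.
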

%%%%%%%%%%%%%%%%%%%%%%%%%%%%%%%%%%%%%%%%%%%%%%%%%%%%%%%%%%%%%%%%%%%%%%%
Theorem~\ref{T4} represents a well-posedness result for the initial
value problem
\begin{eqnarray}
 \label{pv}
 \left\{
 \begin{array}{ll}
 \displaystyle{
 iu_t+u_{xx}+\frac{u}{2}(|u|^2-\frac{A}{t})=0
 } &
   \\[2ex]
 u(0,x)=z_0 \pv \frac{1}{x},
 \end{array}
 \right.
\end{eqnarray}
for some values of $z_0$ and adequate constants $A$ in (\ref{pv}):
If we denote by $u_f(t,x)$ the solution of the IVP (\ref{pv}), we
have proved that there exist appropriate (small) perturbations $u$
of the solution $u_f$ such that
$$
 \lim_{t\rightarrow 0} u(t,x)= z_0 \pv \frac{1}{x}
 + \sqrt{\pi i} \widehat{\overline{u_{+}}}\left(-\frac{x}{2}\right).
$$
In particular, $u-u_f$ has a trace in $L^2$, i.e. there exists the
limit in $L^2$ of $u-u_f$ as $t\rightarrow 0^{+}$. This is in
contrast with the situation in which one considers as initial datum
the delta distribution. In the latter case, it was shown in
\cite{BV2} (see also \cite{BV1}) that when considering the IVP
\begin{eqnarray}
 \label{delta}
 \left\{
 \begin{array}{ll}
 \displaystyle{
 iu_t+u_{xx}+\frac{u}{2}(|u|^2-\frac{c_0^2}{t})=0
 } &
   \\[2ex]
 u(0,x)= \sqrt{4\pi i} \, c_0 \, \delta_{x=0},\qquad c_0\neq 0
 \end{array}
 \right.
 \end{eqnarray}
there exist (small) perturbations $u$ of  the solution
$u_{c_0}(t,x)=  c_0\frac{e^{i\frac{x^2}{4t}}}{\sqrt{t}} $ of the IVP
 (\ref{delta}) such that the limit of $u-u_{c_0}$ as $t\rightarrow
0^{+}$ does not exist in $L^2$. As a consequence the IVP  for the
Dirac-delta (\ref{delta}) is ill-posed.

%-------------------------------------------------------------------------
\section{ACKNOWLEDGMENTS}
 L.~Vega is funded in part by the grant MTM 2007-82186 of MEC
 (Spain) and FEDER.

Part of this work was done while the first author was visiting the
Universidad del Pa\'is Vasco under the PIC program. S.~Guti\'errez
was  partially supported by the grant MTM 2007-82186 of MEC (Spain).
Financial support from the program ``Euclidean Harmonic Analysis,
Nilpotent Lie Groups and PDEs",  held in the Centro di Ricerca
Matematica Ennio De Giorgi in Pisa, is also kindly acknowledged by
S.~Guti\'errez.

%-------------------------------------------------------------------------

%
%%%%%%%%%%%%%%%%%%%%%%%%%%%%%%%%%%%%%%%%%%%%%%%%%%%%%

%%%%%%%%%%%%%%%%%

\begin{thebibliography}{MMM}

\bibitem[AH]{AH} R.~J.~ Arms and F.~R.~Hama,
{\sl {Localized-induction concept on a curved vortex and motion of
an elliptic vortex ring}}, Phys.~Fluids, Vol.~8, no.~4, 553--559,
(1965).

\bibitem[AKO]{AKO} S.~V.~Alekseenko, P.~A.~Kuibin, V.~L.~Okulov,
{\sl {Theory of concentrated vortices. An introduction}}, Springer,
Berlin, 2007.

\bibitem[B]{B} G.~K.~Batchelor,
{\sl {An Introduction to the Fluid Dynamics}}, Cambridge U.~ Press,
Cambridge, 1967.

\bibitem[BV1]{BV1} V.~Banica and L.~Vega, {\sl {On the Dirac delta as initial condition
for nonlinear Schr\"odinger equations}}, Ann.~Inst.~H.~Poincar\'e
Anal.~Non Lin\'eaire, Vol.~25, no.~4, 697--711, 2008.

\bibitem[BV2]{BV2} V.~Banica and L.~Vega,
{\sl {On the stability of a singular vortex dynamic}},
Comm.~Math.~Phys, Vol.~286, no.~2, pp.~593-627, 2009.

\bibitem[BV3]{BV3} V.~Banica and L.~Vega,
{\sl {Scattering for $1$d Cubic NLS and Singular Vortex Dynamics}},
to appear in J.~Europ.~Math.~Soc.

\bibitem[BV4]{BV4} V.~Banica and L.~Vega,
{\sl {Selfsimilar solutions of the Binormal Flow and its
stability}}. To appear in Panorama et Synthese SMF.

\bibitem[Be]{Be} R.~Betchov,
{\sl {On the curvature and torsion of an isolated vortex filament}},
J.~Fluid Mech. {\bf 22} (1965), 471--479.

\bibitem[Bu]{Bu} T.~F.~Buttke,
{\sl {A numerical study of superfluid turbulence in the
Self-Induction Approximation}}, J.~ of Com.~Phys. {\bf{76}} (1988),
301--326.

\bibitem[Caz]{Caz} T.~Cazenave,
{\sl {Semilinear Schr\"odinger equations}}, Courant
Lecture Notes in Mathematics, Vol~{\bf {10}}, New York University,
Courant Institute of Mathematical Sciences, New York; American
Mathematical Society, Providence, RI, 2003. %xiv+323 pp. ISBN: 0-8218-3399-5

\bibitem[CCT]{CCT} C. ~ Chirst, J. ~Colliander and T. ~Tao
{\sl {A priori bounds and weak solutions for the nonlinear
Schr\"odinger equation in Sobolev spaces of negative order}}. J.
Funct. Anal. 254 (2008), no. 2, 368Ð395.

\bibitem[DaR]{DaR} L.~S.~Da Rios,
{\sl {On the motion of an unbounded fluid with a vortex filament of
any shape}}, Rend.~Circ.~Mat.~Palermo, Vol.~22, 117--135, (1906).


\bibitem[Duo]{Duo} J.~ Duoandikoetxea,
{\sl {Fourier Analysis}},
Graduate Studies in Mathematics, Vol.~{\bf{29}}, American
Mathematical Society, Providence, Rhode Island, 2000.

\bibitem[Gru]{Gru} A.~Grunrock,
{\sl {Bi- and trilinear Schr\"odinger estimates in one space
dimension with applications to cubic NLS and DNLS.}} Int. Math. Res.
Not. 2005, no. 41, 2525-2558.

\bibitem[GV]{GV} S.~Guti\'errez and L.~Vega,
{\sl {Self-similar solutions of the localized induction
approximation: singularity formation}}, Nonlinearity, Vol.~$17$,
no.~$6$, pp.~2091-2136, 2004.

\bibitem[GRV]{GRV} S.~Guti\'errez, J.~Rivas and L.~Vega,
{\sl {Formation of singularities and self-similar vortex motion
under the localized induction approximation}}, Commun.~Part.~Diff.,
Vol.~$28$,  pp.~927-968, 2003.

\bibitem[Has]{Has} H.~Hasimoto,
{\sl {A soliton on a vortex filament}}, J.~Fluid Mech. {\bf {51}}
(1972), 477--485.

\bibitem[KPV]{KPV} C.~Kenig, G.~Ponce and L.~Vega,
{\sl {On the ill-posedness of some canonical dispersive equations}},
Duke Math. J. 106 (2001), pp. 617-633.

\bibitem[KT]{KT}H. ~Koch and  D. ~Tataru,
{\sl {A priori bounds for the 1D cubic NLS in negative Sobolev
spaces}}. Int. Math. Res. Not. IMRN 2007, no. 16, Art. ID rnm053, 36
pp.

\bibitem[Lip1]{Lip1} T.~Lipniacki,
{\sl {Quasi-static solutions for quantum vortex motion under the
localized induction approximation}}, J.~Fluid Mech., Vol.~ 477,
pp.~321-337, (2003).

\bibitem[Lip2]{Lip2} T.~Lipniacki,
{\sl {Shape-preserving solutions for quantum vortex motion under
localized induction approximation}}, Physics of Fluids, Vol.~15,
no.~6, pp.~1381--1395, (2003).

\bibitem[MB]{MB}  A.~J.~Majda and A.~L.~Bertozzi,
{\sl {Vorticity and Incompressible Flow}}, Cambridge Texts in
Applied Mathematics, Cambridge U.~Press, 2002.

\bibitem[NSVZ]{NSVZ}A.~Nahmod, J.~Shatah, L.~ Vega and C.~Zeng,
{\sl {Schr\"odinger maps and their associated frame systems}} Int.
Math. Res. Not. IMRN 2007, no. 21, Art. ID rnm088, 29 pp.


\bibitem[Pitt]{Pitt} H.~R.~Pitt,
{\sl {Theorems on Fourier series and power series}},
Duke Math. J., Vol.~$3$, no.~$4$, pp.~747-755, 1937.

\bibitem[Ricca]{Ricca} R.~L.~Ricca,
{\sl {The contributions of Da Rios and Levi-Civita to asymptotic
potential theory and vortex filament dynamics}}, Fluid Dynam.~ Res.
{\bf 18} (1996), 245--268.

\bibitem[Ricca1]{Ricca1} R.~L.~Ricca,
{\sl {Physical interpretation of certain invariants for vortex
filament motion under LIA}}, Phys.~Fluids A {\bf 4} (1992),
938--944.

\bibitem[S]{S} P.~G.~Saffman,
{\sl {Vortex Dynamics}}, Cambridge Monographs on Mechanics and
Applied Mathematics, Cambridge U.~Press, New York 1992.

\bibitem[Sch]{Sch} K.~W.~ Schwarz,
{\sl {Three-dimensional vortex dynamics in superfluid ${\ }^{4}$He:
line-line and line-boundary interactions}}, Phy.~Rev~B, Vol.~$31$,
pp.~5782--5804, (1985).

\bibitem[Stein]{Stein} E.~M.~Stein,
{\sl {Harmonic Analysis: Real-Variable Methods, Orthogonality, and
Oscillatory Integrals}}, Princeton University Press, Princeton, New
Jersey, 1993.

\bibitem[VV]{VV} A.~Vargas and L.~Vega,
{\sl {Global wellposedness for 1D non-linear Schr\"odinger equation
for data with an infinite L2 norm.}} J. Math. Pures Appl. (9) 80,
no. 10, pp.~1029-1044, (2001).

\end{thebibliography}
\end{document}